\def\RSthmtxt{theorem~}\newref{thm}{name = \RSthmtxt}}
\def\RSlemtxt{lemma~}\newref{lem}{name = \RSlemtxt}}
\numberwithin{equation}{section}
\numberwithin{figure}{section}
\numberwithin{table}{section}
\theoremstyle{plain}
\newtheorem{thm}{\protect\theoremname}[section]
\theoremstyle{plain}
\newtheorem{lem}[thm]{\protect\lemmaname}
\theoremstyle{plain}
\newtheorem{prop}[thm]{\protect\propositionname}
\def\@maketitle{%
  \normalfont\normalsize
  \@adminfootnotes
  \@mkboth{\@nx\shortauthors}{\@nx\shorttitle}%
  \global\topskip0\p@\relax %
  \@settitle
  \ifx\@empty\authors \else \@setauthors \fi
  \ifx\@empty\@dedicatory
  \else
    \baselineskip18\p@
    \vtop{\centering{\footnotesize\itshape\@dedicatory\@@par}%
      \global\dimen@i\prevdepth}\prevdepth\dimen@i
  \fi
  \@setabstract
  \normalsize
    \dimen@18\p@ \advance\dimen@-\baselineskip
    \vskip\dimen@\relax
}
\def\@setaddresses{\par
  \nobreak \begingroup
\footnotesize
  \def\author##1{\nobreak\addvspace\bigskipamount}%
  \def\\{\unskip, \ignorespaces}%
  \interlinepenalty\@M
  \def\address##1##2{\begingroup
    \par\addvspace\bigskipamount\noindent
    \@ifnotempty{##1}{(\ignorespaces##1\unskip) }%
    {\scshape\ignorespaces##2}\par\endgroup}%
  \def\curraddr##1##2{\begingroup
    \@ifnotempty{##2}{\nobreak\curraddrname
      \@ifnotempty{##1}{, \ignorespaces##1\unskip}\/:\space
      ##2\par}\endgroup}%
  \def\email##1##2{\begingroup
    \@ifnotempty{##2}{\nobreak\noindent\emailaddrname
      \@ifnotempty{##1}{, \ignorespaces##1\unskip}\/:\space
      \ttfamily##2\par}\endgroup}%
  \def\urladdr##1##2{\begingroup
    \def~{\char`\~}%
    \@ifnotempty{##2}{\nobreak\urladdrname
      \@ifnotempty{##1}{, \ignorespaces##1\unskip}\/:\space
      \ttfamily##2\par}\endgroup}%
  \addresses
  \endgroup
}
\providecommand{\lemmaname}{Lemma}
\providecommand{\propositionname}{Proposition}
\providecommand{\theoremname}{Theorem}
\begin{document}
\global\long\def\Law{\operatorname{Law}}%

\global\long\def\supp{\operatorname{supp}}%

\global\long\def\dif{\mathrm{d}}%

\global\long\def\e{\mathrm{e}}%

\global\long\def\p{\mathrm{p}}%

\global\long\def\q{q}%

\global\long\def\Dif{\mathrm{D}}%

\global\long\def\eps{\varepsilon}%

\global\long\def\Cov{\operatorname{Cov}}%

\global\long\def\Var{\operatorname{Var}}%

\global\long\def\sgn{\operatorname{sgn}}%

\global\long\def\Uniform{\operatorname{Uniform}}%

\global\long\def\Lip{\operatorname{Lip}}%

\global\long\def\argmin{\operatorname*{argmin}}%

\global\long\def\argmax{\operatorname*{argmax}}%

\global\long\def\osc{\operatorname*{osc}}%
\DeclareRobustCommand\lbind{L}
\title{Existence of stationary stochastic Burgers evolutions on \texorpdfstring{$\mathbf{R}^2$}{R\texttwosuperior}
and \texorpdfstring{$\mathbf{R}^3$}{R\textthreesuperior}}
\author{Alexander Dunlap}
\date{October 27, 2019}
\thanks{The author was partially supported by the NSF Graduate Research Fellowship
Program under grant DGE-1147470.}
\address{Department of Mathematics, Stanford University, 450 Jane Stanford
Way, Building 380, Stanford, CA 94305 USA}
\email{{\rmfamily\itshape \href{mailto:ajdunl2@stanford.edu}{\nolinkurl{ajdunl2@stanford.edu}}}}
\begin{abstract}
We prove that the stochastic Burgers equation on $\mathbf{R}^{d}$,
$d<4$, forced by gradient noise that is white in time and smooth
in space, admits spacetime-stationary solutions. These solutions are
thus the gradients of solutions to the KPZ equation on $\mathbf{R}^{d}$
with stationary gradients. The proof works by proving tightness of
the time-averaged laws of the solutions in an appropriate weighted
space.
\end{abstract}

\maketitle

\section{Introduction}

Consider the stochastic Burgers equation
\begin{equation}
\dif u=\tfrac{1}{2}[\Delta u-\nabla(|u|^{2})]\dif t+\dif(\nabla V)\label{eq:Burgers}
\end{equation}
on $\mathbf{R}\times\mathbf{R}^{d}$, where $V$ is a Wiener process
in time with smooth values in space and $\dif$ is the Itô time differential
(so $\dif(\nabla V)$ is white in time and smooth and of gradient
type in space). To be precise, let $\dif W$ be a space-time white
noise on\textbf{ $\mathbf{R}\times\mathbf{R}^{d}$} and let $V=\rho*W$,
where $\rho\in\mathcal{C}^{\infty}(\mathbf{R}^{d})\cap L^{2}(\mathbf{R}^{d})$
is a spatial mollifier and $*$ denotes convolution in space. We will
always work with strong solutions to \eqref{Burgers}, as defined
in \cite{DPZ14}. Our goal is to prove the following.
\begin{thm}
\label{thm:maintheorem}If $d<4$, then there exist statistically
spacetime-stationary strong solutions to \eqref{Burgers}.
\end{thm}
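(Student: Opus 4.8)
\emph{Strategy.} The plan is to construct an invariant measure for the Markov semigroup associated to \eqref{Burgers} by a Krylov--Bogoliubov time-averaging argument, and then to run the evolution from that measure to produce a stationary solution. The first move is to remove the nonlinearity by the Hopf--Cole transform. Writing a solution of \eqref{Burgers} as $u=\nabla h$, the potential $h$ solves the KPZ equation $\partial_{t}h=\tfrac12\Delta h-\tfrac12|\nabla h|^{2}+\dot V$ (this is the sense in which the solutions in the statement are ``gradients of solutions to the KPZ equation''), and $Z:=\e^{-h}$ solves, by It\^o's formula, the linear multiplicative stochastic heat equation $\dif Z=\tfrac12\Delta Z\,\dif t-Z\,\dif V$ up to a spatially constant zeroth-order term which, being a deterministic additive constant for $h$, does not affect $u=\nabla h$; since the noise is smooth in space, no renormalization is needed. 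I would start the evolution from the spatially homogeneous datum $u(0,\cdot)\equiv0$, i.e.\ $Z(0,\cdot)\equiv1$; the Feynman--Kac representation of the stochastic heat equation then gives $Z(t,\cdot)>0$ for all $t$, so that $h=-\log Z$ and $u=-\nabla Z/Z$ are globally defined, and the Hopf--Cole correspondence transfers well-posedness of \eqref{Burgers} (in the sense recalled in the introduction) to the relevant function spaces, which is what allows one to speak of its Markov semigroup at all.

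\emph{A priori bounds.} The core of the argument is to prove bounds, \emph{uniform in $t\ge0$}, on suitable moments of $u(t,\cdot)$ and of a spatial H\"older or Sobolev seminorm of $u(t,\cdot)$, measured in a polynomially weighted norm, say with weight $(1+|x|)^{-a}$ for a small $a>0$. Such a weight is forced on us because a spatially stationary field does not decay --- it grows at most slowly at infinity --- and its purpose is that the corresponding weighted H\"older/Sobolev spaces embed compactly into weaker ones. These bounds I would obtain from Duhamel and Feynman--Kac representations: the gradient structure of the equation and of the noise is essential, since $u$ sees $h$ only through its spatial increments, and one controls $u=-\nabla Z/Z$ through moment bounds on $\nabla Z$ together with lower bounds on $Z$ (equivalently, through control of increments of $h$), using the dissipativity of the heat semigroup to offset the injection of fluctuations by the noise. \textbf{This step I expect to be the main obstacle.} The Burgers nonlinearity genuinely resists naive energy estimates, and keeping $\nabla Z/Z$ under control uniformly in time --- ruling out that $Z$ becomes small exactly where its relative gradient is large --- requires careful quantitative use of heat-kernel estimates against the $L^{2}$ mollifier $\rho$. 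It is precisely in balancing the smoothing of the heat semigroup against the size of the nonlinear and noise contributions in these weighted estimates that the restriction $d<4$ appears.

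\emph{Tightness, invariance and stationarity.} Granting the uniform-in-time bounds, I would put $\mu_{T}:=\tfrac1T\int_{0}^{T}\Law(u(t,\cdot))\,\dif t$, a probability measure on the weighted space. The a priori bounds make $\{\mu_{T}\}_{T\ge1}$ tight, so by Prokhorov's theorem some sequence $T_{n}\to\infty$ has $\mu_{T_{n}}$ converging weakly to a probability measure $\mu_{\infty}$. The standard Krylov--Bogoliubov argument --- which uses the Feller property of the Burgers Markov semigroup on the weighted space, itself a consequence of the Hopf--Cole reduction and of continuous dependence on the initial data --- then shows that $\mu_{\infty}$ is invariant. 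Finally, since \eqref{Burgers}, the noise $\nabla V=\nabla(\rho*W)$, and the initial condition $u(0,\cdot)\equiv0$ are all invariant under spatial translations, the law $\Law(u(t,\cdot))$ is spatially stationary for every $t$, hence so is every $\mu_{T}$, hence so is $\mu_{\infty}$. Running the Markov evolution of \eqref{Burgers} from $\mu_{\infty}$ therefore produces a strong solution that is stationary in time and, at every fixed time, spatially stationary --- that is, a statistically spacetime-stationary strong solution, which is the assertion of Theorem~\ref{thm:maintheorem}.
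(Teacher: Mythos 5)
The overall skeleton (start from $u(0,\cdot)\equiv 0$, prove tightness of time-averaged laws in a weighted space where the equation is well-posed, conclude by Krylov--Bogoliubov plus spatial translation invariance) matches the paper. But the step you yourself flag as ``the main obstacle'' is the entire content of the theorem, and the route you propose for it would not work. You suggest controlling $u=-\nabla Z/Z$ uniformly in time through moment bounds on $\nabla Z$ together with lower bounds on $Z$, via heat-kernel estimates against the mollifier $\rho$. In the regime covered by the theorem ($d=2$, or $d=3$ with noise that is not small) the pointwise statistics of $Z$ started from $Z(0,\cdot)\equiv 1$ degenerate as $t\to\infty$: $\mathbf{E}h(t,x)$ grows linearly in $t$, $Z(t,x)\to 0$, and no uniform-in-time lower bound or negative-moment bound on $Z$ is available --- indeed the paper points out that the SHE and KPZ equations are expected \emph{not} to admit stationary solutions here, which is exactly why one must work at the level of $u$ and not of $Z$ or $h$. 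Relatedly, uniform-in-time moment bounds on $u(t,x)$ are never established (and are not needed): the only \emph{a priori} stochastic input is the \emph{time-averaged} bound $\frac{1}{T}\int_0^T\mathbf{E}|u(t,x)|^2\,\dif t\le C$, obtained from Cole--Hopf only through the soft inequality $\mathbf{E}h\ge-\log\mathbf{E}\phi=0$ (Jensen), not through any quantitative control of $Z$.

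What is missing from your proposal is the mechanism that converts this weak, time-averaged, pointwise $L^2$ information into tightness in a space $\mathcal{C}_{\p_\omega}$ with $\omega<1$ (sublinear growth being forced by well-posedness). In the paper this is done by (i) a Kruzhkov-type maximum principle for the weighted quantities $(\partial_i u_i)^+$, giving a Riccati-type differential inequality that crushes rarefactions in order-one time; (ii) a deterministic Sobolev-type interpolation inequality (\propref{deterministic-bound}) showing that a gradient field is controlled in a weighted $\mathcal{C}^0$ norm by $\|\Delta^+f\|$ plus an $L^2(\nu)$ norm against a finite measure, which is where the sampling mesh is ``stretched'' and where the restriction $d<4$ actually enters (one needs $2d/(d+4)<1$), not from balancing heat-semigroup smoothing against the noise; and (iii) an iteration (\lemref{cutitininhalf}, \propref{bringitdown}) combining these with the well-posedness theory to find, in each unit time interval, a time at which $\|u(t,\cdot)\|_{\mathcal{C}_{\p_m}}$ is of order one, which is then upgraded via parabolic regularity to boundedness in probability at the randomized time $S_T+3$ and to compactness via \lemref{weightedcompactness}. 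Without some substitute for (i)--(iii), your tightness claim is unsupported, so as written the proposal has a genuine gap at its central step.
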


Equation \eqref{Burgers} is related via the \emph{Cole--Hopf transform}\cite{Hop50,Col51}
to the KPZ equation\cite{KPZ86} and multiplicative stochastic heat
equation (SHE). Expicitly, if $\phi$ solves the SHE
\[
\dif\phi=\tfrac{1}{2}\Delta\phi\dif t-\phi\dif V,
\]
then $h=-\log\phi$ solves the KPZ equation
\[
\dif h=\tfrac{1}{2}[\Delta h-|\nabla h|^{2}]\dif t+\dif V,
\]
and $u=\nabla h$ solves \eqref{Burgers}. It is easy to see by a
variance computation, using the Feynman--Kac formula, that when $d=2$,
or when $d\ge3$ and $\|\rho\|_{L^{2}(\mathbf{R})}$ is sufficiently
large, the SHE started at $\phi(t,\cdot)\equiv1$ has diverging pointwise
statistics at $t\to\infty$. This implies the same for KPZ solutions
started at zero. Thus it is expected that these equations, unlike
\eqref{Burgers}, do \emph{not} admit space-time stationary solutions
on the whole space. So informally, \thmref{maintheorem} indicates
that the obstruction to stationary KPZ solutions is the growth of
fluctuations of the zero-frequency mode, which is destroyed by the
gradient. The present work thus extends the theory of long-time KPZ
solution behavior. Long-time/large-space scaling limits for the multidimensional
SHE and KPZ equations, especially phrased in terms of solving the
(renormalized) equations with driving noise $V$ that is white in
space as well as time, have been of substantial recent interest. See
\cite{BC98,CSZ17,CD18,CSZ18,Gu18,GQT19} for some results in $d=2$
and \cite{DS80,TZ98,MU18,GRZ18,DGRZ18,DGRZ18b} for some results in
$d\ge3$.

Problems similar to ours have been considered in one spatial dimension
in \cite{BCK14,Bak16,BL19,DGR19}. We obtain the existence results
of these papers in our two- and three-dimensional settings. The functional-analytic
framework that we use is the same as that of \cite{DGR19}. (We completely
avoid the consideration of Lagrangian minimizers and directed polymers
used in \cite{BCK14,Bak16,BL19} and most other earlier works in this
area.) To show the existence of stationary solutions, we prove that
the time-averaged laws of the solutions are tight in a space in which
the equation is well-posed. We use a simple but fundamental idea of
\cite{DGR19} to bound the pointwise variance of the solution at a
random time using the Cole--Hopf transform and Jensen's inequality.
(See \secref{L2bound} below.) However, the tools used in \cite{DGR19}
to bound the derivatives of the solutions (similar to those used in
the periodic setting in \cite{Bor13}) are not available in dimension
$d\ge2$. Moreover, the growth at infinity that is allowed by a variance
bound in $d\ge2$ is too much for the equation \eqref{Burgers} to
be well-posed. These issues pose serious challenges for proving compactness.

Stationary solutions for \eqref{Burgers} are known to exist in $d\ge3$
when $V$ is multiplied by a sufficiently small constant---the so-called
\emph{weak noise} regime, which does not exist in $d=1,2$. This can
be seen as a consequence of the fact that the stationary solutions
exist for the stochastic heat equation in this regime \cite{DS80,TZ98,MSZ16,DGRZ18},
and was studied directly in \cite{Kif97}. When the noise is stronger,
stationary solutions are thought to exist for the Burgers equation
but not for the SHE. This paper thus extends the existence of stationary
solutions for Burgers in $d=3$ to the strong-noise setting.

In another direction, the present work can be seen as an extension
of some of the results of \cite{IK03,GIKP05,Bor16} about stationary
solutions for the multidimensional stochastic Burgers equation on
a periodic domain. The problem of tightness on the whole space is
very different from on a periodic domain, since on the whole space
Poincaré-type inequalities are not available to control the solution
in terms of its derivatives, and controlling the growth of the solutions
at infinity becomes crucial. However, a key part of our argument is
the Kruzhkov maximum principle\cite{Kru64}, which was introduced
in the context of the periodic stochastic Burgers equation in \cite{Bor16}
to give bounds on the slopes of the rarefactions of the solutions.
Mathematically, this means bounds on $(\partial_{i}u_{i})^{+}$ for
$i=1,\ldots,d$.

The literature on stationary solutions for the stochastic Burgers
equation on a one-dimensional compact domain is by now quite extensive.
We mention for example the seminal paper \cite{EKMS00}. Also relevant
are the more recent results \cite{Bor13,Bor18}, which use PDE-style
techniques closer to those in the present paper.

We do not address questions of uniqueness of the stationary measures,
nor do we prove anything about convergence to them. In \cite{Bor13}
on the one-dimensional torus, and \cite{DGR19} on the line, this
was studied using the $L^{1}$ contraction properties of the one-dimensional
Burgers equation. The Burgers equation in $d\ge2$ does not have an
$L^{1}$ contraction property. On the multidimensional torus, \cite{Bor16}
replaced this property with an $L^{\infty}$ contraction property
for the mean-zero integral of $u$, which solves the KPZ equation
\eqref{KPZ} up to a zero-frequency term. When solutions are unbounded
as they are when the equation is considered on the whole space, it
is not clear if or how this argument can be adapted. Using an approach
based on directed polymers, \cite{BL19} proved convergence to the
stationary solution and a one-force-one-solution principle for the
stochastic Burgers equation on the real line. However, the proof relied
on the ordering of the real line and on a relaxation property for
the Burgers equation coming from the fact that the forcing was discrete-time
``kicks.''

\subsection*{Strategy of the proof and outline of the paper}

The technical heart of our work is the use of a Kruzhkov maximum principle
in a nonperiodic setting. We want to show that, as in \cite{Bor16},
$(\partial_{i}u_{i})^{+}$ can be controlled by the solution to a
Riccati-type equation that brings any initial data down to size of
order $1$ in time of order $1$. Physically, this corresponds to
the fact that rarefaction waves are destroyed by the nonlinearity
in Burgers evolution, in contrast to shocks, which are actually encouraged
by the nonlinearity and are only prevented by the viscosity term.
The situation is immediately much more complicated than that considered
in \cite{Bor16} because we do not expect global-in-space bounds,
and so we must work with the equation for the weighted solution. We
do this \secref{maxprinc}, obtaining the derivative bound \eqref{dtzstarreadyforlemmas},
below.

Since the equation for the maximum of $(\partial_{i}u_{i})^{+}$ also
involves $u$ itself, it is necessary to control $u$ in terms of
$(\partial_{i}u_{i})^{+}$ and \emph{a priori} controlled quantities
to close the argument. Unlike in the mean-$0$ periodic case, a function
on the whole space cannot be controlled by its derivatives. The only
available \emph{a priori} controlled quantity is the pointwise $L^{2}$
bound on $u$ discussed above and proved in \secref{L2bound}. Thus
we need to pass from $L^{2}$ control in space to $\mathcal{C}^{0}$
control in space (which is necessary to close the maximum principle
argument for $(\partial_{i}u_{i})^{+}$), using only a bound on the
positive part of the diagonal terms of the Jacobian of $u$. Our tool
for this is \propref{deterministic-bound}, which is stated for general
differentiable functions $\mathbf{R}^{d}\to\mathbf{R}^{d}$ of gradient
type.

The other key challenge to overcome is that for the Burgers equation
to be well-posed, the initial conditions must grow sublinearly at
infinity. Otherwise, mass from infinity could reach the origin in
finite time. The proof that the Burgers equation is well-posed under
this assumption is the same as that of \cite[Theorem 2.3]{DGR19}
in one spatial dimension. We summarize it in \secref{wellposedness}
below. A variance bound on a space-stationary function $g:\mathbf{Z}^{d}\to\mathbf{R}$
restricts the growth of $g$ to be less than order $|x|^{d/2+\delta}$
for any $\delta>0$. This is insufficient when $d\ge2$. Thus, we
need to ``stretch out'' the mesh on which we sample the values of
$u(t,\cdot)$, by using the bound on $(\partial_{i}u_{i})^{+}$ to
interpolate. (See the proof of \propref{deterministic-bound}.) This
makes $d<4$ tractable.

In \secref{tightness}, we put the bounds together to establish tightness
of the time-averaged laws of the Burgers solutions. Our bounds are
in terms of solutions to Riccati-type equations. Since our \emph{a
priori} control of the solutions established in \secref{L2bound}
is only $L^{2}$ in time, we need to use the well-posedness of the
equation to upgrade this to $\mathcal{C}^{0}$ control of the solutions
on very short time scales. It turns out, however, that these time
scales are long enough to find a time at which a spatial $L^{2}$
norm is not too large, using the $L^{2}$ in time control. Then we
can iterate the argument. This is done in the proofs of \lemref{cutitininhalf}
and \propref{bringitdown}. Once the time-averaged laws of the solutions
are controlled uniformly in time, in a space in which the equation
\eqref{Burgers} is well-posed, showing the existence of stationary
solutions and proving \thmref{maintheorem} is a routine application
of parabolic regularity and a Krylov--Bogoliubov argument.

\subsection*{Notation}

We will often write $x^{+}=\max\{x,0\}$. If $x\in\mathbf{R}^{d}$,
we define $|x|$ to be the Euclidean norm of $x$. For a metric space
$\mathcal{X}$ and a normed space $\mathcal{Y}$, we let $\mathcal{C}(\mathcal{X};\mathcal{Y})$
be the space of \emph{bounded} $\mathcal{Y}$-valued continuous functions
on $\mathcal{X}$, equipped with the supremum norm. We define more
function spaces in \secref{wellposedness} below.%

\subsection*{Acknowledgments}

We thank Cole Graham and Lenya Ryzhik for interesting conversations.
We also thank Felix Otto for pointing out the link between the arguments
of \secref{deterministictheory} and the theory of semiconvexity.

\section{Well-posedness\label{sec:wellposedness}}

In order to discuss the well-posedness theory for the equation \eqref{Burgers},
we first need to introduce weighted function spaces. By a weight we
simply mean a positive function $w:\mathbf{R}^{d}\to\mathbf{R}$.
We will use weights of the form
\begin{align}
\p_{\ell,K}(x) & =(\langle x\rangle+K)^{\ell}, & \p_{\ell} & =\p_{\ell,1}, & \xi(x) & =(\log(\langle x\rangle+1))^{3/4}, & \langle x\rangle & =\sqrt{1+|x|^{2}}.\label{eq:weights}
\end{align}
If $w$ is a weight, $k\in\mathbf{Z}_{\ge0}$, and $\alpha\in(0,1)$,
define the norms
\begin{align*}
\|f\|_{\mathcal{C}_{w}^{k}} & =\adjustlimits\sup_{x\in\mathbf{R}^{d}}\max_{j\in\{0,\ldots,k\}}\frac{|f^{(j)}(x)|}{w(x)}, & \|f\|_{\mathcal{C}_{w}^{\alpha}} & =\max\left\{ \|f\|_{\mathcal{C}_{w}},\sup_{|x-y|\le1}\frac{|f(x)-f(y)|}{w(x)|x-y|^{\alpha}}\right\} ,
\end{align*}
and define $\mathcal{C}_{w}^{\alpha}$ to be the space of functions
for which the $\mathcal{C}_{w}^{\alpha}$ norm is finite. Define $\mathcal{A}_{w}^{\alpha}$
to be the subspace of $\mathbf{R}^{d}$-valued elements of $\mathcal{C}_{w}^{\alpha}$
consisting of functions which are gradients, equipped with the $\mathcal{C}_{w}^{\alpha}$
norm. We will abbreviate $\mathcal{C}_{w}=\mathcal{C}_{w}^{0}$ and
$\mathcal{A}_{w}=\mathcal{A}_{w}^{0}$. We will prove tightness results
in the space $\mathcal{A}_{w}$, which introduces a minor technicality
in that $\mathcal{A}_{w}$ is not a Polish space, so Prokhorov's theorem
does not apply. Thus we also consider the spaces
\begin{equation}
\tilde{\mathcal{A}}_{w}=\left\{ f\in\mathcal{A}_{w}\ :\ \lim_{|x|\to\infty}f(x)=0\right\} ,\label{eq:tildeAw}
\end{equation}
which are separable and thus Polish spaces. The following compactness
lemma is simple and was also used in \cite{DGR19}.
\begin{lem}
\label{lem:weightedcompactness}Let $\alpha>0$ and $\ell,\ell_{1},\ell_{2}$
be such that $\ell_{1}<\ell$. Then the embedding $\mathcal{C}_{\p_{\ell_{1}}}\cap\mathcal{C}_{\p_{\ell_{2}}}^{\alpha}\hookrightarrow\mathcal{C}_{\p_{\ell}}$
is compact.
\end{lem}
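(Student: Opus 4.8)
The plan is to show that the closed unit ball of $\mathcal{C}_{\p_{\ell_1}}\cap\mathcal{C}_{\p_{\ell_2}}^{\alpha}$ has compact closure in $\mathcal{C}_{\p_\ell}$; equivalently, that every sequence $(f_n)$ with $\|f_n\|_{\mathcal{C}_{\p_{\ell_1}}}+\|f_n\|_{\mathcal{C}_{\p_{\ell_2}}^{\alpha}}\le M$ has a subsequence converging in the $\mathcal{C}_{\p_\ell}$ norm. First I would extract a locally uniformly convergent subsequence by Arzel\`a--Ascoli. On each ball $B_R=\{x\in\mathbf{R}^{d}:|x|\le R\}$ the weights $\p_{\ell_1},\p_{\ell_2},\p_\ell$ are bounded above and below by positive constants depending only on $R$, so $\sup_{B_R}|f_n|\le M\sup_{B_R}\p_{\ell_1}$, and for $x,y\in B_R$ with $|x-y|\le1$ we have $|f_n(x)-f_n(y)|\le M(\sup_{B_R}\p_{\ell_2})|x-y|^{\alpha}$. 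Thus $(f_n)$ is uniformly bounded and equicontinuous on every $B_R$; Arzel\`a--Ascoli on each $B_R$ together with a diagonal extraction over $R\in\mathbf{N}$ gives a subsequence, still written $(f_n)$, converging uniformly on compact sets to some continuous $f$, and passing to the limit in $|f_n(x)|\le M\p_{\ell_1}(x)$ shows $|f(x)|\le M\p_{\ell_1}(x)$ as well.

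It then remains to upgrade local uniform convergence to convergence in the $\mathcal{C}_{\p_\ell}$ norm, which is where the hypothesis $\ell_1<\ell$ enters, via a tail estimate. Since the base $\langle x\rangle+1\ge 2$ everywhere, for $|x|\ge R$ we have
\[
\frac{|f_n(x)-f(x)|}{\p_\ell(x)}\le\frac{|f_n(x)|+|f(x)|}{\p_\ell(x)}\le 2M(\langle x\rangle+1)^{\ell_1-\ell}\le 2M(\langle R\rangle+1)^{\ell_1-\ell},
\]
and the right-hand side tends to $0$ as $R\to\infty$ because $\ell_1-\ell<0$. On the other hand, for $|x|\le R$,
\[
\frac{|f_n(x)-f(x)|}{\p_\ell(x)}\le\Bigl(\inf_{B_R}\p_\ell\Bigr)^{-1}\,\sup_{B_R}|f_n-f|\longrightarrow 0\quad\text{as }n\to\infty.
\]
Given $\eps>0$, I would fix $R$ making the first bound smaller than $\eps/2$ uniformly in $n$, and then take $n$ large enough that the second bound is smaller than $\eps/2$; combining the two estimates yields $\|f_n-f\|_{\mathcal{C}_{\p_\ell}}\le\eps$ for large $n$, proving compactness of the embedding.

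I do not expect a genuine obstacle here: this is the standard ``Arzel\`a--Ascoli plus uniform tail decay'' scheme for weighted $\mathcal{C}^{0}$ spaces on $\mathbf{R}^{d}$. The only slightly delicate point is the bookkeeping around the definition of $\mathcal{C}_{\p_{\ell_2}}^{\alpha}$: its H\"older seminorm controls increments only over pairs with $|x-y|\le1$, but that is exactly what is needed for equicontinuity on compacts, and the size of $\ell_2$ is otherwise irrelevant, all the decay at infinity being produced by the gap $\ell_1<\ell$.
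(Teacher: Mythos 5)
Your argument is correct and is exactly the standard ``Arzel\`a--Ascoli on balls plus uniform tail decay from the weight gap $\ell_1<\ell$'' proof; the paper itself omits the proof of this lemma (calling it simple and citing \cite{DGR19}), and what you wrote is the expected argument, with the H\"older seminorm over pairs $|x-y|\le1$ supplying precisely the equicontinuity needed on compacts. No gaps.
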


We now establish that the equation \eqref{Burgers} is well-posed
in the space $\mathcal{C}_{\p_{\ell}}$ as long as $\ell<1$. As discussed
in the introduction, the restriction to $\ell<1$ is essential. The
proof given for the one-dimensional case in \cite{DGR19} goes through
for general $d$ without essential modification, but we will need
some lemmas from the construction in this section, so we sketch the
proof here.

We will define solutions to periodic problems and then obtain whole-space
solutions as almost-sure limits. Thus we need a sequence of periodic
driving noises that converge to the full-space noise almost surely
in a weighted space.
\begin{lem}
\label{lem:VLdef}We have a sequence of space-stationary Wiener processes
$V^{[L]}$, $L\in[1,\infty)$, so that $V^{[L]}$ agrees in law with
$\rho*W^{[L]}$, where $W^{[L]}$ is an $L\mathbf{Z}^{d}$-periodic
space-time white noise, and for any $T,\ell>0$ we almost surely have
\begin{equation}
\lim_{L\to\infty}\|V^{[L]}-V\|_{\mathcal{C}([0,T];\mathcal{C}_{\p_{\ell}})}=0.\label{eq:VLconvtoV}
\end{equation}
\end{lem}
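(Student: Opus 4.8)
The plan is to build the periodic noises $V^{[L]}$ on a common probability space together with the full-space noise $V$, by restricting the \emph{same} white noise $W$ and then periodizing. Concretely, let $\dif W$ be the given space-time white noise on $\mathbf{R}\times\mathbf{R}^d$ and, for each $L\in[1,\infty)$, define the periodized noise by summing translates of $W$ over the lattice $L\mathbf{Z}^d$, restricted to a fundamental domain: formally $W^{[L]} = \sum_{k\in L\mathbf{Z}^d} W(\cdot,\cdot+k)$, interpreted as a white noise on $[0,T]\times(\mathbf{R}^d/L\mathbf{Z}^d)$. This has the correct law (an $L\mathbf{Z}^d$-periodic space-time white noise), and setting $V^{[L]} = \rho * W^{[L]}$ (convolution in space, against the periodic sum of translates of $\rho$) gives a space-stationary Wiener process agreeing in law with the object described in the statement. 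The key point of this coupling is that $V^{[L]}(t,x) - V(t,x) = \rho * \bigl(\sum_{k\in L\mathbf{Z}^d\setminus\{0\}} W(\cdot,\cdot+k)\bigr)(t,x)$, i.e., the difference is the contribution to $V$ coming from the "far" copies of the noise, which should be small near any fixed point once $L$ is large and which decays rapidly in $x$ because $\rho\in\mathcal{C}^\infty\cap L^2$ has good decay (or can be taken Schwartz; in any case $\rho\in L^2$ suffices for the $L^2$ estimates, with smoothness giving the pointwise bounds via Sobolev embedding).

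Next I would turn this into a quantitative $L^2$ estimate and then promote it to the almost-sure statement \eqref{VLconvtoV}. For fixed $t$, $V^{[L]}(t,x)-V(t,x)$ is a mean-zero Gaussian with variance $t\sum_{k\in L\mathbf{Z}^d\setminus\{0\}} \|\rho(\cdot - x - k)\|_{L^2}^2$-type expression — more precisely the variance at a point is $t\int (\rho^{[L]}_{\mathrm{tail}})^2$ where $\rho^{[L]}_{\mathrm{tail}}$ collects the translates by nonzero lattice vectors; this tends to $0$ as $L\to\infty$ uniformly for $x$ in any compact set, and grows at most polynomially (in fact is bounded) in $x$. The increments in time are Brownian, so $t\mapsto V^{[L]}(t,\cdot)-V(t,\cdot)$ is, for each $L$, a Gaussian process; one gets moment bounds of the form $\mathbf{E}\,|V^{[L]}(t,x)-V(t,x)|^{2p} \lesssim_p (T\,\delta_L(x))^p$ with $\delta_L(x)\to 0$ and $\delta_L$ uniformly bounded, together with analogous bounds for spatial and temporal Hölder increments (differentiate $\rho$ for spatial regularity; use Gaussianity/Kolmogorov in time). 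A Borel–Cantelli argument along $L\in\mathbf{N}$, combined with monotonicity or a continuity-in-$L$ estimate to handle real $L$, then yields $\|V^{[L]}-V\|_{\mathcal{C}([0,T];\mathcal{C}_{\p_\ell})}\to 0$ almost surely: the weight $\p_\ell$ with $\ell>0$ gives room to absorb the at-most-polynomial (here bounded) growth of the error in $x$ while the decay $\delta_L(x)\to 0$ does the rest, and a Kolmogorov-type chaining over $x$ upgrades pointwise decay to the weighted sup norm.

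The main obstacle is the passage from pointwise (in $x$, and in $t$) Gaussian decay to the \emph{uniform-in-$x$}, \emph{uniform-in-$t\in[0,T]$} convergence in the weighted $\mathcal{C}_{\p_\ell}$ norm, for \emph{all} real $L$ simultaneously almost surely. This requires a genuine chaining/Kolmogorov-continuity argument in the $(t,x)$ variables together with control of the $L$-dependence (e.g., showing $L\mapsto V^{[L]}$ is, in a suitable sense, continuous or that a countable dense set of $L$'s controls the rest). The weight $\p_\ell$ is essential here: without it the error, though decaying at each fixed $x$, need not decay uniformly in $x$, but since the variance of the error is \emph{bounded} in $x$ and the weight $\p_\ell(x)\to\infty$, dividing by the weight forces the weighted sup to be governed by the region $|x|\le R_L$ for an $R_L\to\infty$ chosen slowly, on which pointwise decay applies uniformly. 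Once these moment and chaining estimates are assembled the Borel–Cantelli step is routine, so I would present the coupling construction and the variance computation in detail and treat the chaining as standard.
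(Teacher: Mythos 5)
Your overall strategy --- couple the periodic noises to the same $W$, compute pointwise Gaussian variances, and upgrade to almost-sure convergence in the weighted norm by moment bounds, chaining, and Borel--Cantelli --- is the right one and is essentially what the paper does (the paper's proof is terse on the second half, saying the convergence is ``clear from the smoothness and Gaussianity of the fields''). But your construction of $W^{[L]}$ has a genuine defect: the object $W^{[L]}=\sum_{k\in L\mathbf{Z}^{d}}W(\cdot,\cdot+k)$, with $W$ the \emph{full-space} white noise, is not well defined. Testing it against any nonzero $\phi$ supported in a fundamental cell gives $\sum_{k}\langle W,\phi(\cdot-k)\rangle$, a sum of infinitely many i.i.d.\ Gaussians each of variance $\|\phi\|_{L^{2}}^{2}$, which diverges. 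A periodic white noise has the $L^{2}$ mass of \emph{one} cell per cell, not of infinitely many independent cells, so ``periodize by summing all lattice translates of $W$'' cannot produce the right law. Consequently your key coupling identity $V^{[L]}-V=\rho*\bigl(\sum_{k\ne0}W(\cdot,\cdot+k)\bigr)$ is meaningless as written, and the variance computation built on it does not get off the ground. (A secondary point: $\rho$ is only assumed to lie in $\mathcal{C}^{\infty}\cap L^{2}$, so you cannot invoke Schwartz-type decay; the estimates must rest on the $L^{2}$ tail of $\rho$.)

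The paper's construction repairs exactly this: choose cutoffs $\chi^{[L]}\in\mathcal{C}_{\mathrm{c}}^{\infty}$ with $\supp\chi^{[L]}\subset[-L,L]^{d}$ and $\sum_{\mathbf{j}\in\mathbf{Z}^{d}}\chi^{[L]}(x+L\mathbf{j})^{2}=1$, and set $W^{[L]}=\sum_{\mathbf{j}}\tau_{L\mathbf{j}}(\chi^{[L]}W)$, $V^{[L]}=\rho*W^{[L]}$. The sum is locally finite, and the sum-of-squares condition is precisely what makes the covariance of $W^{[L]}$, tested against periodic functions, that of an $L\mathbf{Z}^{d}$-periodic space-time white noise. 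Moreover, taking $\chi^{[L]}\equiv1$ on, say, $[-L/2,L/2]^{d}$, the difference $W^{[L]}-W$ seen from a fixed compact set consists only of the far-field noise that was attenuated plus translated copies supported at distance of order $L$, so the variance of $V^{[L]}(t,x)-V(t,x)$ at fixed $x$ tends to $0$ while remaining bounded in $x$. If you substitute this construction (or the sharp-cutoff version $\chi^{[L]}=\mathbf{1}_{[-L/2,L/2)^{d}}$, which also satisfies the sum-of-squares identity) for your divergent periodization, the remainder of your argument --- moment bounds for space-time increments, Kolmogorov/chaining, Borel--Cantelli along a countable set of $L$ with the weight $\p_{\ell}$ absorbing the bounded-in-$x$ variance --- goes through and recovers \eqref{eq:VLconvtoV}.
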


\begin{proof}
For each $L\in[1,\infty)$, let $\chi^{[L]}\in\mathcal{C}_{\mathrm{c}}^{\infty}$
be a function so that $\supp\chi^{[L]}\subset[-L,L]^{d}$ and $\sum_{\mathbf{j}\in\mathbf{Z}^{d}}\chi^{[L]}(x+L\mathbf{j})^{2}=1$
for all $x\in\mathbf{R}^{d}$. Define $W^{[L]}=\sum_{\mathbf{j}\in\mathbf{Z}^{d}}\tau_{L\mathbf{j}}(\chi_{L}W)$,
where $\tau_{L\mathbf{j}}$ denotes spatial translation by $L\mathbf{j}$,
and define $V^{[L]}=\rho*W^{[L]}$. A simple covariance computation
shows that $V^{[L]}$ has the desired law, and the convergence \eqref{VLconvtoV}
is also clear from the smoothness and Gaussianity of the fields.
\end{proof}
For notational convenience, put $V^{[\infty]}=V$. We note that if
$\psi$ solves the linear problem
\begin{equation}
\dif\psi=\tfrac{1}{2}\Delta\psi\,\dif t+\dif(\nabla V^{[L]})\addtocounter{equation}{1}\tag{\theequation\ensuremath{_{\lbind}}}\label{eq:ASHEL}
\end{equation}
and $v$ (classically) solves the problem
\begin{equation}
\partial_{t}v=\tfrac{1}{2}\Delta v-\tfrac{1}{2}\nabla(|v+\psi|^{2}),\addtocounter{equation}{1}\tag{\theequation\ensuremath{_{\psi}}}\label{eq:vPDE}
\end{equation}
then $u=v+\psi$ solves the problem
\begin{equation}
\dif u=\tfrac{1}{2}[\Delta u-\nabla(|u|^{2})]\dif t+\dif(\nabla V^{[L]}),\addtocounter{equation}{1}\tag{\theequation\ensuremath{_{\lbind}}}\label{eq:uLPDE}
\end{equation}
which is \eqref{Burgers} with the periodized noise $V^{[L]}$ in
place of the whole-space noise $V$. The stochastic PDE \eqref{ASHEL}
has Gaussian solutions, while \eqref{vPDE} is a PDE with classical
solutions. Thus we will generally work with \eqref{ASHEL} and \eqref{vPDE},
and add the solutions together to obtain solutions to \eqref{uLPDE}.

The fact that, for any $L\mathbf{Z}^{d}$-periodic $\underline{u}\in\mathcal{C}$,
there exist global-in-time solutions to \eqref{uLPDE} with initial
condition $u(0,\cdot)=\underline{u}$ can be proved by a fixed-point
argument, as was done in \cite{Bor16}. (Similar arguments were carried
out in \cite{DPDT94,DPG94,Bor13,DGR19}.) The global-in-time well-posedness
theory on the whole space is somewhat more involved, since appropriate
weights must be used: certainly if the initial conditions are to live
in $\mathcal{C}_{\p_{\ell}}$, then $\ell$ must be required to be
strictly less than $1$, or else mass from infinity could accumulate
at the origin in finite time.
\begin{thm}
\label{thm:existenceuniqueness}Fix $0<m<m'<1$ and $M,R,T<\infty$.
There is a random constant $X=X(m,m',M,R,T)$, finite almost surely,
and an $\alpha>0$ so that the following holds. If $L\in(0,\infty]$
and $\underline{u}\in\mathcal{A}_{\p_{m}}$ is $L\mathbf{Z}^{d}$-periodic,
and $\|\underline{u}\|_{\mathcal{C}_{\p_{m}}}\le M$, then there is
a unique $u\in\mathcal{C}([0,T];\mathcal{A}_{\p_{m'}})$ with 
\begin{equation}
\|u\|_{\mathcal{C}([0,T];\mathcal{C}_{\p_{m'}})},\|u\|_{\mathcal{C}([1,T];\mathcal{C}_{\p_{2m'}}^{\alpha})}\le X\label{eq:ubounds}
\end{equation}
and so that $u(0,\cdot)=\underline{u}$ and $u$ is a strong solution
to \eqref{uLPDE}. Moreover, if $L'\in(0,\infty]$, $\tilde{\underline{u}}\in\mathcal{A}_{\p_{m}}$
is $L'\mathbf{Z}^{d}$-periodic and such that $\|\underline{\tilde{u}}\|_{\mathcal{C}_{\p_{\ell}}}\le M$,
and $\tilde{u}$ is the strong solution to {\renewcommand\lbind{L'}\eqref{uLPDE}}
with initial condition $\tilde{u}(0,\cdot)=\underline{\tilde{u}}$,
then
\begin{equation}
\|u-\tilde{u}\|_{\mathcal{C}([0,T];\mathcal{C}([-R,R]^{d}))}\le X[\|\underline{u}-\underline{\tilde{u}}\|_{\mathcal{C}_{\p_{\ell}}}+\|\psi^{[L]}-\psi^{[L']}\|_{\mathcal{C}([0,T];\mathcal{C}_{\p_{m}})}].\label{eq:continuity}
\end{equation}
\end{thm}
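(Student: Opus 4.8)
The plan is to prove Theorem~\ref{thm:existenceuniqueness} by the same splitting-and-fixed-point scheme used in \cite[Theorem 2.3]{DGR19}, carried out for general $d$. First I would write $u=v+\psi^{[L]}$, where $\psi^{[L]}$ solves the linear equation \eqref{ASHEL} with the periodized noise (or the whole-space noise when $L=\infty$). Since \eqref{ASHEL} is linear with additive Gaussian forcing, $\psi^{[L]}$ is explicit as a stochastic convolution of the heat semigroup against $\dif(\nabla V^{[L]})$; a standard Gaussian/Kolmogorov computation using the smoothness of $\rho$ shows that for every $T$ and every $\ell>0$ one has $\psi^{[L]}\in\mathcal{C}([0,T];\mathcal{C}_{\p_\ell}^\alpha)$ almost surely for some small $\alpha>0$, with a norm that is finite a.s.\ uniformly in $L\in(0,\infty]$, and that $\psi^{[L]}\to\psi^{[\infty]}=\psi$ in $\mathcal{C}([0,T];\mathcal{C}_{\p_\ell})$ a.s.\ (this last point follows from \lemref{VLdef}). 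So the noise contribution is handled once and for all, and it remains to solve the random PDE \eqref{vPDE} for $v$ with the \emph{fixed, continuous} coefficient field $\psi=\psi^{[L]}$.

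Next I would set up the fixed-point/contraction argument for \eqref{vPDE} in a weighted space. Writing $v$ via the Duhamel formula against the heat semigroup $P_t=\e^{t\Delta/2}$,
\[
v(t)=P_t\underline{u}-\tfrac12\int_0^t P_{t-s}\nabla\bigl(|v(s)+\psi(s)|^2\bigr)\,\dif s,
\]
the key estimates are the weighted heat-kernel bounds: $P_t$ maps $\mathcal{C}_{\p_\ell}\to\mathcal{C}_{\p_\ell}$ boundedly (because $\p_\ell$ has at most polynomial growth and $\langle x-y\rangle^\ell\lesssim \langle x\rangle^\ell\langle y\rangle^\ell$ is false but $\p_\ell(x)\lesssim \p_\ell(y)(1+|x-y|)^\ell$ holds, which is enough against the Gaussian), and $\nabla P_t$ gains a factor $t^{-1/2}$ while preserving the weight up to a factor that can be absorbed by slightly enlarging the exponent from $m'$ to $2m'$ after time $1$. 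The quadratic nonlinearity $|v+\psi|^2$ lives in $\mathcal{C}_{\p_{2m'}}$ if $v,\psi\in\mathcal{C}_{\p_{m'}}$, and the $t^{-1/2}$ singularity from $\nabla P_{t-s}$ is integrable, so a standard Picard iteration in $\mathcal{C}([0,\tau];\mathcal{C}_{\p_{m'}})$ gives a unique local solution on a time interval $\tau$ depending only on $M$, $m$, $m'$, and the a.s.-finite norm of $\psi^{[L]}$. Here is where the constraint $\ell<1$ enters crucially: only for $\ell<1$ does the interplay between the polynomial weight growth and the Gaussian tail of the heat kernel keep all constants finite (for $\ell\ge1$ the map $P_t$ already fails to preserve $\mathcal{C}_{\p_\ell}$, reflecting mass coming in from infinity). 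Parabolic smoothing then upgrades $v$ to $\mathcal{C}([1,T];\mathcal{C}^\alpha_{\p_{2m'}})$, and to be a genuine strong solution in the $\mathcal{A}$-spaces one notes that the gradient structure is preserved by the flow (the nonlinearity is itself a gradient and the heat semigroup commutes with $\nabla$), so $v(t)$ and hence $u(t)$ stay in $\mathcal{A}_{\p_{m'}}$.

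To get the \emph{global} (in time, up to the fixed horizon $T$) bound \eqref{ubounds} rather than just local existence, I would combine the local theory with the a~priori $L^2$/maximum-principle control that is the subject of later sections; concretely the $\mathcal{C}_{\p_{m'}}$ norm of $u(t)$ cannot blow up before $T$ because the derivative bound \eqref{dtzstarreadyforlemmas} (Kruzhkov) controls $(\partial_i u_i)^+$, and Proposition~\ref{prop:deterministic-bound} converts that together with the $L^2$ bound into a $\mathcal{C}^0$ bound — so the local solution extends and one defines $X$ as the (a.s.\ finite) supremum over $[0,T]$ of the resulting norms. Finally, the stability/uniqueness estimate \eqref{continuity} follows by writing the equation for the difference $w=u-\tilde u=(v-\tilde v)+(\psi^{[L]}-\psi^{[L']})$: on the fixed compact set $[-R,R]^d$ one uses finite speed of propagation of information for the viscous Burgers equation (mass reaching $[-R,R]^d$ in time $\le T$ comes from a bounded region, by the sublinear-growth well-posedness already in hand) to reduce to a Gronwall estimate on a bounded domain, where the nonlinearity $|u|^2-|\tilde u|^2=(u+\tilde u)\cdot w$ is Lipschitz with the a.s.-finite constant coming from the already-established bounds; this gives exponential-in-$T$ dependence, absorbed into $X$. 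The main obstacle is the weighted heat-kernel bookkeeping that makes the contraction close with constants uniform in $L$ and forces the $\ell<1$ restriction — in particular checking that the quadratic term, after applying $\nabla P_{t-s}$, genuinely maps back into the weighted space with a constant independent of the period $L$ (so that the whole-space solution is recovered as an a.s.\ limit of the periodic ones); once that is in place the rest is a routine Picard/Gronwall argument.
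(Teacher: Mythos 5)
Your overall architecture (split $u=v+\psi^{[L]}$, handle $\psi^{[L]}$ as a stochastic convolution, run a Duhamel/Picard contraction for \eqref{vPDE} in weighted spaces, upgrade by parabolic smoothing, pass $L\to\infty$) is the same as the paper's, which only sketches these steps by reference to \cite[Section 2]{DGR19}. The genuine gap is in how you close the \emph{global-in-$[0,T]$} bound, i.e.\ the first part of \eqref{ubounds}. You propose to prevent blow-up of $\|u(t,\cdot)\|_{\mathcal{C}_{\p_{m'}}}$ by invoking the Kruzhkov bound \eqref{dtzstarreadyforlemmas}, \propref{deterministic-bound}, and the $L^{2}$ control of \secref{L2bound}. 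This cannot work here: \propref{L2bound} is a statement in expectation, time-averaged, and valid only for the solution started from $u(0,\cdot)\equiv0$ (it rests on Cole--Hopf with $\phi(0,\cdot)\equiv1$, Jensen, and space-stationarity), whereas \thmref{existenceuniqueness} must hold almost surely for \emph{every} deterministic initial condition with $\|\underline{u}\|_{\mathcal{C}_{\p_{m}}}\le M$. Moreover it is circular in the paper's logic: the tightness machinery of \secref{tightness} (e.g.\ \lemref{cutitininhalf}, \propref{bounded-periodic}) itself uses \thmref{existenceuniqueness} and \propref{ugrowth}. The mechanism the paper actually uses, and which your proposal omits, is \propref{ugrowth}: a maximum principle for the weighted field $\gamma=av$ with $a=1/\p_{m+\eps t,K}$, giving $\Gamma'\le 2K^{-(1-\ell)}\Gamma^{3/2}$ and hence a Riccati-type bound whose blow-up time $K^{1-\ell}\min\{\|u(0,\cdot)\|_{\mathcal{C}_{\p_{m,K}}}^{-1},1\}$ can be pushed past $T$ by taking $K$ large; this is exactly where the restriction $m'<1$ is used, and it also requires the gradient structure of $u$ (vacuous only in $d=1$).

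Two subsidiary points in your write-up are also off. First, your explanation of the $\ell<1$ restriction via the heat semigroup is incorrect: $P_{t}$ maps $\mathcal{C}_{\p_{\ell}}$ to itself for \emph{any} polynomial weight (Gaussian tails beat polynomial growth); the obstruction at $\ell\ge1$ is nonlinear (transport at speed $\sim|u|$ brings mass from infinity in finite time), and it is the Riccati coefficient $K^{-(1-\ell)}$ in \propref{ugrowth} that encodes it. Second, the viscous equation has no finite speed of propagation, so \eqref{continuity} cannot be reduced to a bounded domain that way; the localization in the paper (following \cite{DGR19}) is achieved through weighted parabolic regularity and Gronwall estimates in which far-away contributions are suppressed by the weights and heat-kernel tails, with the constant $X$ absorbing the a.s.-finite weighted norms already established. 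With \propref{ugrowth} inserted as the a priori bound and the locality argument rephrased in weighted form, the rest of your plan matches the intended proof.
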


Since the proof of \thmref{existenceuniqueness} is a straightforward
generalization of the proofs given in \cite[Section 2]{DGR19}, we
only sketch it. The existence theory for the periodic problem is a
standard fixed-point argument along the lines of those discussed in
\cite{DPDT94,DPZ14,Bor16}. The fact that $\|u\|_{\mathcal{C}([0,T];\mathcal{C}_{\p_{m'}})}$
is bounded uniformly in $L$ (the first part of \eqref{ubounds})
follows from a maximum principle argument along the lines of \cite[Proposition 2.10]{DGR19},
which we establish below as \propref{ugrowth}, both because we will
use it in the sequel and because its proof statement requires attention
to the fact that the solution $u$ is assumed to be of gradient type,
which is a vacuous assumption in $d=1$. Finally, the existence theory
for the nonperiodic problem, the bound on $\|u\|_{\mathcal{C}([1,T];\mathcal{C}_{\p_{2m'}}^{\alpha})}$
in \eqref{ubounds}, and \eqref{continuity} follow from parabolic
regularity estimates essentially identical to those given in \cite{DGR19}.

We now establish uniform-in-$L$ bounds on $\|u\|_{\mathcal{C}([0,T];\mathcal{C}_{\p_{m'}})}$.
Our bound in the following proposition, which was similarly proved
in the one-dimensional case in \cite{DGR19}, shows that $\|u(t,\cdot)\|_{\mathcal{C}_{\p_{m',K}}}$
grows in $t$ at most like the solution to a Riccati equation of the
form $f'=cf^{2}$, where the constant $c$ can be made small by choosing
$K$ large relative to the size of the noise. Increasing $K$ reduces
the maximum gradient of $\p_{m',K}$, attenuating the effect on $\|u(t,\cdot)\|_{\mathcal{C}_{\p_{m',K}}}$
of movement of mass in the solution $u(t,\cdot)$. This movement of
mass is all that we have to be concerned about, as the addition of
mass coming from the noise is finite in the weighted norm, and the
nonlinearity in \eqref{Burgers} is a gradient so does not add mass.
\begin{prop}
\label{prop:ugrowth}For any $m\in(0,1)$, $T\in(0,\infty)$, and
$\eps\in(0,(1-m)/T)$, there is a constant $C=C(m,T,\eps)<\infty$
so that the following holds. Let $L\in[1,\infty)$, $\psi\in\mathcal{C}([0,1];\mathcal{A}_{\xi}^{1})$,
$v\in\mathcal{C}([0,1];\mathcal{A}_{\p_{m,K}})$ be a solution to
\eqref{vPDE}, and $u=v+\psi$. (Recall the definition \eqref{weights}
of $\xi$.) There is a constant $K_{0}=K_{0}(d,\eps,\|\psi\|_{\mathcal{C}([0,T];\mathcal{C}_{\xi}^{1})})<\infty$
so that if $K\ge K_{0}$, then for all
\[
t\in[0,K^{1-\ell}\min\{\|u(0,\cdot)\|_{\mathcal{C}_{\p_{m,K}}}^{-1},1\}],
\]
we have
\begin{equation}
\|u(t,\cdot)\|_{\mathcal{C}_{\p_{m+\eps t,K}}}\le[\min\{\|u(0,\cdot)\|_{\mathcal{C}_{\p_{m,K}}}^{-1},1\}-K^{-(1-\ell)}t]^{-1}+\|\psi\|_{\mathcal{C}([0,T];\mathcal{C}_{\p_{m,K}})}.\label{eq:ubound}
\end{equation}
\end{prop}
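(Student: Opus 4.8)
The plan is to run a maximum principle on the weighted solution $\|u(t,\cdot)\|_{\mathcal{C}_{\p_{m+\eps t,K}}}$, tracking how each term in \eqref{vPDE} affects the weighted supremum. First I would reduce to an estimate on $v=u-\psi$ so that I am working with a classical PDE, absorbing the $\|\psi\|_{\mathcal{C}([0,T];\mathcal{C}_{\p_{m,K}})}$ term at the end by the triangle inequality; since $\psi\in\mathcal{C}([0,1];\mathcal{A}_{\xi}^{1})$ and $\xi$ grows only logarithmically, $\|\psi\|_{\mathcal{C}_{\p_{m,K}}}$ and all its spatial derivatives up to order one are bounded in terms of $\|\psi\|_{\mathcal{C}_{\xi}^{1}}$ uniformly in $K\ge 1$. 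So the problem becomes: show that $\phi(t):=\|v(t,\cdot)\|_{\mathcal{C}_{\p_{m+\eps t,K}}}$, possibly after adding the $\mathcal O(1)$ contribution of $\psi$, satisfies a differential inequality of Riccati type $\phi'\le c_K\phi^2$ with $c_K=K^{-(1-\ell)}$ (up to harmless lower-order terms that the choice of $\eps$ and $K_0$ is designed to kill), and then solve that ODE.

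The core computation is to differentiate $w(x)^{-1}v(t,x)$, where $w=w(t,x)=\p_{m+\eps t,K}(x)$, at a point $x_t$ realizing the weighted supremum (or, more carefully, to work with a smooth mollified maximum / a viscosity-solution formulation to avoid regularity issues at the argmax). At such a point the spatial gradient of $v/w$ vanishes and its Hessian is $\le 0$. Writing $\partial_t v=\tfrac12\Delta v-\tfrac12\nabla(|v+\psi|^2)$ and distributing the weight, the Laplacian term contributes $\le 0$ by the second-derivative test after accounting for the cross terms $\nabla w\cdot\nabla v$ (which vanish) and the $v\,\Delta(1/w)$-type term; one checks $|\nabla w|/w\lesssim (m+\eps t)/K$ and $|\Delta w|/w\lesssim 1/K^2$ plus lower order, so these are $\mathcal O(K^{-1})$. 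The genuinely dangerous term is the nonlinearity $-\tfrac12\nabla(|v+\psi|^2)=-( v+\psi)\cdot\nabla(v+\psi)$: written in weighted variables it produces a term like $-\tfrac{1}{w}(v+\psi)\cdot\nabla(v+\psi)$, whose worst piece is $\tfrac{|\nabla w|}{w}\cdot\tfrac{|v|^2}{w}\lesssim K^{-1}(m+\eps t)\,\phi^2$ after using $v=\nabla h$-type structure to integrate by parts and move a derivative onto $w$; this is exactly the $c_K\phi^2$ with $c_K\sim K^{-(1-\ell)}$ claimed, once $\ell$ is identified with $m$ (or $m'$) appropriately. Crucially, the time-derivative of the weight itself, $\partial_t w = \eps (\log\langle x\rangle\text{-type factor})\,w\cdot\text{(something)}$ — more precisely $\partial_t \p_{m+\eps t,K}=\eps\log(\langle x\rangle+K)\,\p_{m+\eps t,K}$ — contributes a \emph{favorable} (negative, after moving to the other side) term $-\eps\log(\langle x_t\rangle+K)\,\phi$, and this is the mechanism that lets us absorb the lower-order $\mathcal O(K^{-1})$ garbage: at points where $|x_t|$ is small the garbage is bounded by an absolute constant and is dominated by choosing $K_0$ large (so that $c_K$-Riccati growth over time $\lesssim 1$ stays controlled), while at points where $|x_t|$ is large the $\eps\log$ gain beats any constant. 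Choosing $K_0=K_0(d,\eps,\|\psi\|_{\mathcal{C}_{\xi}^1})$ makes all error terms subordinate.

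Having established $\phi'\le K^{-(1-\ell)}\phi^2$ on the relevant time interval (with $\phi(0)\le\min\{\|u(0,\cdot)\|_{\mathcal{C}_{\p_{m,K}}}^{-1},1\}^{-1}$ after the $\psi$-splitting, matching the form of the bound), I integrate: the solution of $f'=c f^2$, $f(0)=a$, is $f(t)=(a^{-1}-ct)^{-1}$, valid for $t<a^{-1}/c$, which is precisely the stated lifespan $t\le K^{1-\ell}\min\{\|u(0,\cdot)\|_{\mathcal{C}_{\p_{m,K}}}^{-1},1\}$ and the stated bound $[\min\{\cdots\}-K^{-(1-\ell)}t]^{-1}$; adding back $\|\psi\|_{\mathcal{C}([0,T];\mathcal{C}_{\p_{m,K}})}$ gives \eqref{ubound}. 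A comparison-principle / Gronwall argument upgrades the differential inequality at the argmax to the genuine bound on $\phi$, and the constraint $\eps<(1-m)/T$ guarantees $m+\eps t<1$ throughout $[0,T]$ so that $\mathcal{A}_{\p_{m+\eps t,K}}$ is a legitimate well-posedness space and the weight exponents never cross the critical value $1$.

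The step I expect to be the main obstacle is making the maximum-principle argument rigorous despite the lack of a priori regularity and compactness: the supremum defining $\phi(t)$ need not be attained (the weights decay, so $v/w\to 0$ at infinity only if we are in $\tilde{\mathcal A}$), and even if attained the argmax $x_t$ may not be differentiable in $t$, so the clean "evaluate the PDE at the argmax" heuristic must be replaced either by a viscosity-solution/doubling-of-variables formulation or by working with a regularized weight and a penalization term $\delta|x|^2$ that forces a maximizer, then letting $\delta\to0$. Keeping track of the nonlinear term's sign and of the precise power of $K$ under this regularization — and ensuring the $\psi$-contributions (which carry their own $\xi$-weight, not $\p_{m,K}$-weight) are correctly controlled uniformly in $K$ — is where the bookkeeping is delicate; everything else is a routine parabolic maximum-principle computation of the kind carried out in \cite{DGR19,Bor16}.
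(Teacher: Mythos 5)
Your proposal follows essentially the same route as the paper: weight $v$ by $\p_{m+\eps t,K}^{-1}$, evaluate the equation at a spatial maximum of the weighted quantity, use the gradient structure of $v$ so that the dangerous transport part of the nonlinearity vanishes there, leaving only the $K^{-(1-\ell)}\phi^{2}$ Riccati term, let the $-\eps\log(\langle x\rangle+K)$ produced by the time-dependent exponent absorb the constant and $\psi$-contributions once $K\ge K_{0}$, and integrate the Riccati inequality before adding back $\|\psi\|_{\mathcal{C}([0,T];\mathcal{C}_{\p_{m,K}})}$. The one obstacle you flag---making sense of the time derivative of the running supremum---is resolved in the paper not via viscosity solutions or penalization but by contracting the componentwise equation with $\gamma=v/\p_{m+\eps t,K}$ so as to work with the scalar $|\gamma|^{2}$ and then invoking Hamilton's lemma \cite[Lemma 3.5]{Ham86}, which gives that the spatial maximum is Lipschitz in $t$ with almost-everywhere derivative controlled by $\partial_{t}|\gamma|^{2}$ at the maximizers.
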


The proof of \propref{ugrowth} will be based on the maximum principle
for the weighted solution $v$, so first we show what happens when
we differentiate a weighted version of $v$.
\begin{lem}
\label{lem:differentiateformaxprinciple}Suppose $\psi:\mathbf{R}\times\mathbf{R}^{d}\to\mathbf{R}^{d}$
is differentiable and a gradient, $v:\mathbf{R}\times\mathbf{R}^{d}\to\mathbf{R}^{d}$
is a gradient and solves \eqref{vPDE} for some $L\in(0,\infty]$,
$a:\mathbf{R}\times\mathbf{R}^{d}\to\mathbf{R}$ is differentiable
in time and twice-differentiable in space, and $\gamma=av$. Then
we have, for each $i\in\{1,\ldots,d\}$, that
\begin{equation}
\begin{aligned}\partial_{t}\gamma_{i} & =\gamma_{i}\partial_{t}(\log a)+\tfrac{1}{2}\Delta\gamma_{i}-\nabla(\log a)\cdot\nabla\gamma_{i}+\tfrac{1}{2}\gamma_{i}[|\nabla(\log a)|^{2}-\Delta(\log a)]\\
 & \qquad-(a^{-1}\gamma+\psi)\cdot(\nabla\gamma_{i}-\gamma_{i}\nabla(\log a))-(\gamma+a\psi)\cdot\partial_{i}\psi.
\end{aligned}
\label{eq:dtyi-final}
\end{equation}
\end{lem}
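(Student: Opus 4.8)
The plan is to reduce the vector equation to a scalar equation for each component $v_i$ and then conjugate by the multiplier $a$. First I would use the hypothesis that $v$ is a gradient to rewrite the nonlinearity in \eqref{vPDE}: since $\partial_i v_j = \partial_j v_i$, the $i$-th component of $\tfrac12\nabla(|v+\psi|^2)$ is $\sum_j (v_j+\psi_j)(\partial_j v_i + \partial_i\psi_j) = (v+\psi)\cdot\nabla v_i + (v+\psi)\cdot\partial_i\psi$, where $\partial_i\psi$ denotes the vector with $j$-th entry $\partial_i\psi_j$. Thus each $v_i$ solves the scalar equation $\partial_t v_i = \tfrac12\Delta v_i - (v+\psi)\cdot\nabla v_i - (v+\psi)\cdot\partial_i\psi$. (Note that the gradient structure of $\psi$ is not actually needed here — only that of $v$, which is precisely the point that is vacuous when $d=1$.)

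Next I would substitute $v_i = a^{-1}\gamma_i$ and expand using the product and chain rules. Writing $\partial_t\gamma_i = \gamma_i\,\partial_t(\log a) + a\,\partial_t v_i$ and $\nabla v_i = a^{-1}(\nabla\gamma_i - \gamma_i\nabla(\log a))$ (from $\nabla(a^{-1}) = -a^{-1}\nabla(\log a)$), the first-order and zeroth-order terms fall out immediately: $a(v+\psi)\cdot\nabla v_i = (a^{-1}\gamma + \psi)\cdot(\nabla\gamma_i - \gamma_i\nabla(\log a))$ since $v = a^{-1}\gamma$, and $a(v+\psi)\cdot\partial_i\psi = (\gamma + a\psi)\cdot\partial_i\psi$. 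For the diffusion term I would differentiate $\nabla v_i$ once more, using $\Delta(\log a) = a^{-1}\Delta a - |\nabla(\log a)|^2$, to get $a\Delta v_i = \Delta\gamma_i - 2\nabla(\log a)\cdot\nabla\gamma_i + \gamma_i[|\nabla(\log a)|^2 - \Delta(\log a)]$, hence $\tfrac12 a\Delta v_i = \tfrac12\Delta\gamma_i - \nabla(\log a)\cdot\nabla\gamma_i + \tfrac12\gamma_i[|\nabla(\log a)|^2 - \Delta(\log a)]$. Assembling $a\,\partial_t v_i$ from these pieces and adding $\gamma_i\,\partial_t(\log a)$ gives exactly \eqref{dtyi-final}.

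This is an entirely elementary computation; there is no real obstacle, and the value of $L$ plays no role whatsoever. The only point requiring a little care is the conversion of the second-order term: when differentiating $\nabla v_i = a^{-1}\nabla\gamma_i - a^{-1}\gamma_i\nabla(\log a)$ one must keep both the cross term $-2\nabla(\log a)\cdot\nabla\gamma_i$ and the $+|\nabla(\log a)|^2\gamma_i$ contribution produced by differentiating the prefactor $a^{-1}$, so that these combine correctly with $-\gamma_i\Delta(\log a)$ into the bracketed expression. With that bookkeeping done, all terms line up with the claimed identity and there are no cancellation subtleties.
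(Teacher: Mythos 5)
Your computation is correct and is essentially the paper's own proof: both arguments rest on the same conjugation identities $a\nabla v_{i}=\nabla\gamma_{i}-\gamma_{i}\nabla(\log a)$ and $a\Delta v_{i}=\Delta\gamma_{i}-2\nabla(\log a)\cdot\nabla\gamma_{i}+\gamma_{i}[|\nabla(\log a)|^{2}-\Delta(\log a)]$, together with the gradient structure to rewrite $\tfrac{1}{2}\partial_{i}(|v+\psi|^{2})$ as $(v+\psi)\cdot\nabla v_{i}+(v+\psi)\cdot\partial_{i}\psi$, and whether one performs this rewriting before or after multiplying by $a$ is immaterial. Your side observation that only the gradient structure of $v$ (not of $\psi$) is needed for the identity in the stated form is accurate but does not alter the argument.
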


\begin{proof}
By the product rule we have
\[
\partial_{t}\gamma=v\partial_{t}a+a\partial_{t}v=\gamma\partial_{t}(\log a)+\frac{a}{2}[\Delta v-\nabla(|v+\psi|^{2})].
\]
It is a general fact about functions $f$, $g$, and $h=fg$ that
\begin{align}
f\nabla g & =\nabla h-h\nabla(\log f); & f\Delta g & =\Delta h-2\nabla(\log f)\cdot\nabla h+h[|\nabla(\log f)|^{2}-\Delta(\log f)].\label{eq:calculusidentities}
\end{align}
Therefore, we have
\begin{equation}
\partial_{t}\gamma_{i}=\gamma_{i}\partial_{t}(\log a)+\tfrac{1}{2}\Delta\gamma_{i}-\nabla(\log a)\cdot\nabla\gamma_{i}+\tfrac{1}{2}\gamma_{i}[|\nabla(\log a)|^{2}-\Delta(\log a)]-\tfrac{1}{2}a\partial_{i}(|v+\psi|^{2}).\label{eq:dtyi}
\end{equation}
On the other hand, using the fact that $v+\psi$ is a gradient and
also \eqref{calculusidentities} again, we have
\begin{align*}
\tfrac{1}{2}a\partial_{i}(|v+\psi|^{2})=a(v+\psi)\cdot\partial_{i}(v+\psi)=a(v+\psi)\cdot\nabla(v_{i}+\psi_{i}) & =(a^{-1}\gamma+\psi)\cdot(\nabla\gamma_{i}-\gamma_{i}\nabla(\log a))+(\gamma+a\psi)\cdot\partial_{i}\psi,
\end{align*}
Plugging this into \eqref{dtyi}, we obtain \eqref{dtyi-final}.
\end{proof}
Now we are ready to prove \propref{ugrowth}.
\begin{proof}[Proof of \propref{ugrowth}.]
Let $a(t,x)=1/\p_{m+\eps t,K}(x)$ and put $\gamma=av$. %
Multiplying \eqref{dtyi-final} (with this choice of $a$) by $\gamma_{i}$
and summing over $i$, we obtain
\begin{align*}
\tfrac{1}{2}\partial_{t}|\gamma|^{2} & =|\gamma|^{2}\partial_{t}(\log a)+\tfrac{1}{2}\Delta|\gamma|^{2}-|\nabla\gamma|^{2}-\tfrac{1}{2}\nabla(\log a)\cdot\nabla|\gamma|^{2}+\tfrac{1}{2}|\gamma|^{2}[|\nabla(\log a)|^{2}-\Delta(\log a)]\\
 & \qquad-(a^{-1}\gamma+\psi)\cdot[\tfrac{1}{2}\nabla|\gamma|^{2}-|\gamma|^{2}\nabla(\log a)]-(\gamma+a\psi)\cdot(\gamma\cdot\nabla)\psi.
\end{align*}
At a local maximum of $|\gamma|^{2}(t,\cdot)$, we have $\Delta|\gamma|^{2}\le0$
and $\nabla|\gamma|^{2}=0$, so at such a local maximum, using at
the Cauchy--Schwarz inequality we obtain
\begin{equation}
\begin{aligned}\tfrac{1}{2} & \partial_{t}|\gamma|^{2}\le|\gamma|^{2}\partial_{t}(\log a)+\tfrac{1}{2}|\gamma|^{2}[|\nabla(\log a)|^{2}-\Delta(\log a)]+(a^{-1}|\gamma|+|\psi|)|\gamma|^{2}|\nabla(\log a)|+(|\gamma|+a|\psi|)|\gamma||\nabla\psi|\\
 & =a^{-1}|\nabla(\log a)||\gamma|^{3}+|\gamma|^{2}[\tfrac{1}{2}\partial_{t}(\log a)+\tfrac{1}{2}|\nabla(\log a)|^{2}-\tfrac{1}{2}\Delta(\log a)+|\psi||\nabla(\log a)|+|\nabla\psi|+a|\gamma|^{-1}|\psi||\nabla\psi|].
\end{aligned}
\label{eq:dtgamma2}
\end{equation}
Using the simple bounds
\begin{align}
|\nabla(\log a)(t,x)| & \le1, & |\Delta(\log a)| & \le d+2, & |a^{-1}\nabla(\log a)| & \le K^{-(1-\ell)}, & \partial_{t}(\log a)(t,x) & =-\eps\log(\langle x\rangle+K),\label{eq:abounds}
\end{align}
we can derive from \eqref{dtgamma2} that%
, at a local maximum $x$ of $|\gamma|^{2}(t,\cdot)$ such that $|\gamma(t,x)|^{2}\ge1$,
\[
\tfrac{1}{2}\partial_{t}|\gamma|^{2}\le K^{-(1-\ell)}|\gamma|^{3}+|\gamma|^{2}[-\eps\log(\langle x\rangle+K)+\tfrac{1}{2}(d+3)+2\xi\|\psi\|_{\mathcal{C}([0,T];\mathcal{C}_{\xi}^{1})}+a\xi^{2}\|\psi\|_{\mathcal{C}([0,T];\mathcal{C}_{\xi}^{1})}^{2}].
\]
Now we assume $K$ is so large (depending on $d$, $\eps$ and $\|\psi\|_{\mathcal{C}([0,T];\mathcal{C}_{\xi}^{1})}$)
that the term in brackets is guaranteed to be negative regardless
of $t$ and $x$. Then, at a local maximum $x$ of $|\gamma|^{2}(t,\cdot)$
such that $|\gamma(t,x)|^{2}\ge1$, we have $\partial_{t}|\gamma|^{2}\le2K^{-(1-\ell)}|\gamma|^{3}$.
By \cite[Lemma 3.5]{Ham86}, this implies that if we define 
\begin{equation}
\Gamma(t)=\max_{x\in\mathbf{R}^{d}}|\gamma(t,x)|^{2}=\|v(t,\cdot)\|_{\mathcal{C}_{\p_{m+\eps t,K}}}^{2},\label{eq:Gammadef}
\end{equation}
then $\Gamma:[0,T]\to\mathbf{R}$ is Lipschitz and we have $\Gamma'(t)\le2K^{-(1-\ell)}\Gamma(t)^{3/2}$
whenever $\Gamma(t)\ge1$, which means that
\[
\Gamma(t)^{1/2}\le[\min\{\Gamma(0)^{-1/2},1\}-K^{-(1-\ell)}t]^{-1}
\]
whenever $t<K^{1-\ell}\min\{\Gamma(0)^{-1/2},1\}$. Recalling \eqref{Gammadef},
this implies \eqref{ubound}.
\end{proof}

\section{Pointwise variance bound\label{sec:L2bound}}

In the one-dimensional setting, sufficient compactness of solutions
to \eqref{uLPDE} to obtain existence of stationary solutions was
obtained in \cite{DGR19} from $L^{2}$ bounds on the solution and
its derivative at a point. In dimension $d>1$, such an $L^{2}$ bound
on the derivative is not available due to contributions from terms
of the form $\partial_{i}u_{j}$ for $i\ne j$. However, we do have
the same $L^{2}$ bound on the solution at a point. The below proposition
and its proof are the same as in the one-dimensional case \cite{DGR19},
although we need to obtain bounds in the $L\mathbf{Z}^{d}$-periodic
setting, uniformly in $L$.
\begin{prop}
\label{prop:L2bound}There is a constant $C=C(\rho)<\infty$, depending
only on $\rho$, so that if $L\in[1,\infty]$ and $u$ solves \eqref{uLPDE}
with initial condition $u(0,\cdot)\equiv0$, then for all $t\ge0$
and $x\in\mathbf{R}^{d}$, we have the bound
\begin{equation}
\frac{1}{T}\int_{0}^{T}\mathbf{E}u(t,x)^{2}\,\dif t\le C.\label{eq:timeavgedL2bound}
\end{equation}
\end{prop}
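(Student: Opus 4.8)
The plan is to use the Cole--Hopf transform exactly as in \cite{DGR19}, turning the pointwise variance of $u$ into a short consequence of Jensen's inequality together with the mean-preserving property of the multiplicative stochastic heat equation. Write $u=\nabla h$ with $h=-\log\phi$, where $\phi$ solves $\dif\phi=\tfrac{1}{2}\Delta\phi\,\dif t-\phi\,\dif V^{[L]}$ with $\phi(0,\cdot)\equiv1$ (so that $u(0,\cdot)\equiv0$). In the periodic setting $\phi$ is a strictly positive classical-in-space solution, so $h$ is well defined, and Itô's formula shows that $h$ solves the KPZ-type equation $\dif h=\tfrac{1}{2}[\Delta h-|\nabla h|^{2}+\sigma^{2}]\,\dif t+\dif V^{[L]}$, where $\sigma^{2}=\|\rho\|_{L^{2}(\mathbf{R}^{d})}^{2}$ is the spatially constant Itô correction; differentiating, $u=\nabla h$ indeed solves \eqref{uLPDE}.

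Fix $x$ and integrate the equation for $h(\cdot,x)$ over $t\in[0,T]$; rearranging yields
\[
\int_{0}^{T}|\nabla h(t,x)|^{2}\,\dif t=\int_{0}^{T}\Delta h(t,x)\,\dif t+\sigma^{2}T+2V^{[L]}(T,x)-2\bigl(h(T,x)-h(0,x)\bigr).
\]
Take expectations. The noise term vanishes since $V^{[L]}(T,x)$ has mean zero. The term $\mathbf{E}\int_{0}^{T}\Delta h(t,x)\,\dif t$ vanishes because $\Delta h=\nabla\cdot u$: since the periodized noise is stationary under all translations of the torus $\mathbf{R}^{d}/L\mathbf{Z}^{d}$, so is $h(t,\cdot)$, and integrating $\nabla\cdot u$ over the torus gives $0$ pathwise, so by Fubini and spatial stationarity $\mathbf{E}\,\Delta h(t,x)=0$ for every $x$. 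Finally, by Jensen's inequality and the mean-preservation identity $\mathbf{E}\,\phi(T,x)=1$ (the drift $\tfrac{1}{2}\Delta$ preserves the spatial mean and the stochastic integral in the SHE has zero expectation), $\mathbf{E}\bigl(h(T,x)-h(0,x)\bigr)=-\mathbf{E}\log\phi(T,x)\ge-\log\mathbf{E}\,\phi(T,x)=0$, so the last term contributes with a favorable sign. Hence $\int_{0}^{T}\mathbf{E}|u(t,x)|^{2}\,\dif t\le\sigma^{2}T$, which is \eqref{timeavgedL2bound} with $C(\rho)=\|\rho\|_{L^{2}(\mathbf{R}^{d})}^{2}$, uniformly in $L\in[1,\infty)$. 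The case $L=\infty$ then follows by passing to the limit: by \thmref{existenceuniqueness} and \lemref{VLdef}, $u^{[L]}(t,x)\to u^{[\infty]}(t,x)$ almost surely, so Fatou's lemma gives the same time-averaged bound for $u^{[\infty]}$. (Alternatively one can run the argument directly at $L=\infty$, using that $\mathbf{E}\,\Delta h(t,x)=0$ follows from differentiating the constant function $x\mapsto\mathbf{E}\,h(t,x)$ under the expectation, justified by the local moment bounds from parabolic regularity.)

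I do not expect any step to be a serious obstacle; the only genuinely probabilistic input is the Jensen/mean-preservation step, which is the idea of \cite{DGR19}. The points needing care are routine: positivity of $\phi$ (together with the known negative-moment bounds for the SHE, used to legitimize the manipulations with $\log\phi$), the justification of taking expectations of each term in the displayed identity, and in particular ruling out $\mathbf{E}\,h(T,x)=+\infty$ --- which is automatic, since $\mathbf{E}\,h(T,x)^{-}\le\mathbf{E}\,\phi(T,x)=1$ while all the other terms in the identity are integrable. These are all handled exactly as in the one-dimensional case \cite{DGR19}, now carried out for the $L\mathbf{Z}^{d}$-periodic problem with constants independent of $L$.
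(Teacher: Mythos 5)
Your proposal is correct and follows essentially the same route as the paper's own proof: Cole--Hopf with $\phi(0,\cdot)\equiv1$, the It\^o correction in the KPZ equation, spatial stationarity to make $\mathbf{E}\,\Delta h(t,x)$ and the noise term drop out, and Jensen's inequality with $\mathbf{E}\,\phi(T,x)=1$ to get $\mathbf{E}\,h(T,x)\ge0$, yielding the time-averaged bound uniformly in $L$. The only (harmless) discrepancy is that for finite $L$ the It\^o correction of the periodized noise is $\sum_{\mathbf{k}\in\mathbf{Z}^{d}}\rho^{*2}(L\mathbf{k})$ rather than $\|\rho\|_{L^{2}(\mathbf{R}^{d})}^{2}$ alone; since this sum is bounded uniformly in $L\ge1$ the conclusion is unchanged, and the paper simply runs the argument for all $L\in[1,\infty]$ at once instead of passing to the limit $L\to\infty$.
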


\begin{proof}
Let $\phi$ solve the multiplicative stochastic heat equation $\dif\phi=\frac{1}{2}\Delta\phi\,\dif t-\phi\,\dif V^{[L]}$
with $\phi(0,\cdot)\equiv1$, so by the Itô formula, $h=-\log\phi$
solves the KPZ equation
\begin{equation}
\dif h=\frac{1}{2}[\Delta h-|\nabla h|^{2}]\,\dif t+\dif V^{[L]}+\sum_{\mathbf{k}\in\mathbf{Z}^{d}}\rho^{*2}(L\mathbf{k})\label{eq:KPZ}
\end{equation}
with $h(0,\cdot)\equiv0$. By Jensen's inequality and the Itô formula,
$\mathbf{E}h(t,x)\ge-\log\mathbf{E}\phi(t,x)=0$ for all $t\ge0$,
$x\in\mathbf{R}$. Therefore,
\[
0\le\mathbf{E}h(T,x)=\int_{0}^{T}\mathbf{E}\left[-|\nabla h(t,x)|^{2}+\sum_{\mathbf{k}\in\mathbf{Z}^{d}}\rho^{*2}(L\mathbf{k})\right]\,\dif t=T\sum_{\mathbf{k}\in\mathbf{Z}^{d}}\rho^{*2}(L\mathbf{k})-\int_{0}^{T}\mathbf{E}|u(t,x)|^{2}\,\dif t
\]
by \eqref{KPZ} and the space-stationarity of $h$, %
and $\sum_{\mathbf{k}\in\mathbf{Z}^{d}}\rho^{*2}(L\mathbf{k})$ is
bounded above independently of $L$, so we get \eqref{timeavgedL2bound}.
\end{proof}

\section{The Kruzhkov maximum principle\label{sec:maxprinc}}

Now we deploy a version of the Kruzhkov maximum principle \cite{Kru64}.
This type of argument was used for the stochastic Burgers equation
on the $d$-dimensional torus in \cite[Theorem 4.2]{Bor16}. In the
whole-space setting, the need to introduce weights complicates the
situation enormously. In this section, we perform the derivative computations
that will be necessary to set up the argument. The next several sections
will develop the necessary bounds on the terms.
\begin{lem}
\label{lem:dtzbound1}Suppose that $a:\mathbf{R}^{d}\to\mathbf{R}$
is twice-differentiable, $\psi:\mathbf{R}\times\mathbf{R}^{d}\to\mathbf{R}$
is twice-differentiable, and $v$ solves \eqref{vPDE}. Define $u=v+\psi$.
Fix $i\in\{1,\ldots,d\}$ and define $z=a\partial_{i}v_{i}$. Then
\begin{equation}
\partial_{t}z\le\tfrac{1}{2}\Delta z-\nabla(\log a)\cdot\nabla z-\tfrac{1}{2}z[|\nabla(\log a)|^{2}-\Delta(\log a)]-a^{-1}|z+a\partial_{i}\psi_{i}|^{2}-u\cdot(\nabla z-z\nabla(\log a)+a\partial_{ii}\psi).\label{eq:dtzbound1}
\end{equation}
\end{lem}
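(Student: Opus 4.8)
The plan is to follow the same differentiation-at-a-component strategy as in the proof of \lemref{differentiateformaxprinciple}, but now applied to the scalar $z=a\partial_i v_i$ rather than to a weighted component of $v$ itself. First I would record, as in the earlier lemma, that since $v$ and $\psi$ are gradients, $\partial_i v_i=\partial_i v_i$ and more importantly $v\cdot\partial_i(v+\psi)=\tfrac12\partial_i(|v+\psi|^2)$; differentiating the equation \eqref{vPDE} in $x_i$ gives a scalar equation for $\partial_i v_i$. The key point in handling the nonlinearity is that
\[
\partial_i\bigl[\tfrac12\partial_i(|v+\psi|^2)\bigr]=\partial_i\bigl[(v+\psi)\cdot\partial_i(v+\psi)\bigr]=|\partial_i(v+\psi)|^2+(v+\psi)\cdot\partial_{ii}(v+\psi),
\]
so that, after using the gradient property to write $\partial_i(v+\psi)=\nabla(v_i+\psi_i)$ and $(v+\psi)\cdot\partial_{ii}(v+\psi)=u\cdot\partial_i\nabla(v_i+\psi_i)$, the $-|\partial_i(v+\psi)|^2$ term will be the good (dissipative) quadratic term that produces the $-a^{-1}|z+a\partial_i\psi_i|^2$ on the right-hand side, and the remaining term will become the transport term $-u\cdot(\nabla z-z\nabla(\log a)+a\partial_{ii}\psi)$ once weighted by $a$.

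The next step is the weighting computation, identical in structure to the one in \lemref{differentiateformaxprinciple}: writing $z=a\,\partial_i v_i$ and using the two algebraic identities \eqref{calculusidentities} with $f=a$, I convert $a\,\partial_t(\partial_i v_i)$, $a\,\tfrac12\Delta(\partial_i v_i)$, and $a\nabla(\cdot)$ into expressions in $z$, $\nabla z$, $\Delta z$, $\nabla(\log a)$, and $\Delta(\log a)$. This produces
\[
\partial_t z=\tfrac12\Delta z-\nabla(\log a)\cdot\nabla z-\tfrac12 z\bigl[|\nabla(\log a)|^2-\Delta(\log a)\bigr]-a^{-1}|\partial_i(v+\psi)|^2 a^2\cdot a^{-1}-u\cdot(\nabla z-z\nabla(\log a)+a\partial_{ii}\psi),
\]
where the precise bookkeeping of the quadratic term is: $-\tfrac12 a\,\partial_i^2(|v+\psi|^2)=-a|\partial_i(v+\psi)|^2-au\cdot\partial_{ii}(v+\psi)$, and then $a|\partial_i(v+\psi)|^2=a|\partial_i v_i+\partial_i\psi_i|^2$ only after one observes $\partial_i(v+\psi)$ has $i$-th component $\partial_i v_i+\partial_i\psi_i$ — here is where the inequality rather than equality enters, since $a|\partial_i(v+\psi)|^2\ge a|\partial_i v_i+\partial_i\psi_i|^2=a^{-1}|z+a\partial_i\psi_i|^2$, by discarding the off-diagonal components of the vector $\partial_i(v+\psi)$. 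That single inequality is the only place we lose equality, and it is exactly what converts \eqref{dtyi-final}-style identities into the stated bound \eqref{dtzbound1}.

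The remaining terms $-au\cdot\partial_{ii}(v+\psi)$ must be reorganized into $-u\cdot(\nabla z-z\nabla(\log a)+a\partial_{ii}\psi)$: using the gradient property, $a\partial_{ii}v_i=a\partial_i\nabla(v_i)\cdot e_i$... more precisely $a\,\partial_{ii}v=\partial_i(\nabla z/a)\cdot$(adjustments), so that $a\,u\cdot\partial_{ii}v=u\cdot(\nabla z-z\nabla(\log a))$ after applying the first identity in \eqref{calculusidentities} to $z=a\,\partial_i v_i$ and differentiating once more in $x_i$; the $\psi$ contribution $a\,u\cdot\partial_{ii}\psi$ is kept as is. The main obstacle, and the only subtle point, is precisely this last rearrangement: one must be careful that $\partial_i v_i$ is a genuine second derivative of the potential (so that $\nabla(\partial_i v_i)=\partial_i(\nabla v_i)=\partial_i(\partial_i v)=\partial_{ii}v$ by symmetry of mixed partials), which is exactly the gradient hypothesis on $v$, vacuous when $d=1$ but essential here — this is what lets $\nabla z$ be legitimately related to $a\,\partial_{ii}v$ plus lower-order $z\nabla(\log a)$ terms. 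Everything else is the routine product-rule and \eqref{calculusidentities} bookkeeping already carried out in \lemref{differentiateformaxprinciple}, so I would present the nonlinearity manipulation and the dropped-off-diagonal-terms inequality in detail and relegate the weighting identities to a parenthetical reference to the earlier lemma.
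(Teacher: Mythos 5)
Your proposal is correct and follows essentially the same route as the paper: differentiate \eqref{vPDE} in $x_i$, expand $\tfrac12\partial_{ii}(|v+\psi|^2)$, use the gradient property of $v$ to write $\partial_{ii}v=\nabla(\partial_i v_i)$, obtain the inequality by discarding the off-diagonal components of $\partial_i(v+\psi)$, and then convert $a\nabla(\partial_i v_i)$ and $a\Delta(\partial_i v_i)$ via \eqref{calculusidentities} exactly as in \lemref{differentiateformaxprinciple}. Apart from the garbled bookkeeping in your intermediate display (the factor written as $a^{-1}|\partial_i(v+\psi)|^2a^2\cdot a^{-1}$ should simply be $a|\partial_i(v+\psi)|^2$), the argument matches the paper's proof.
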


\begin{proof}
Without loss of generality, we can assume that $i=1$. Define $q=\partial_{1}v_{1}$.
Taking derivatives in \eqref{vPDE}, we have
\begin{align*}
\partial_{t}q=\tfrac{1}{2}\Delta q-\tfrac{1}{2}\partial_{11}(|v+\psi|)^{2} & =\tfrac{1}{2}\Delta q-|\partial_{1}(v+\psi)|^{2}-u\cdot\partial_{11}(v+\psi)\le\tfrac{1}{2}\Delta q-|q+\partial_{1}\psi_{1}|^{2}-u\cdot(\nabla q+\partial_{11}\psi),
\end{align*}
where in the last inequality we used that $u\cdot\partial_{11}v=u\cdot\nabla q$
since $v$ is a gradient. Now $z=aq$, so
\begin{equation}
\partial_{t}z=a\partial_{t}q\le\tfrac{1}{2}a\Delta q-a|q+\partial_{1}\psi_{1}|^{2}-u\cdot(a\nabla q+a\partial_{11}\psi).\label{eq:dtzfirst}
\end{equation}
Applying the two calculus identities \eqref{calculusidentities} to
\eqref{dtzfirst} yields \eqref{dtzbound1} in the case $i=1$:
\[
\partial_{t}z\le\tfrac{1}{2}\Delta z-\nabla(\log a)\cdot\nabla z-\tfrac{1}{2}z[|\nabla(\log a)|^{2}-\Delta(\log a)]-a^{-1}|z+a\partial_{1}\psi_{1}|^{2}-u\cdot(\nabla z-z\nabla(\log a)+a\partial_{11}\psi).\qedhere
\]
\end{proof}
Now we specialize to the case when the partial derivative is evaluated
at a sufficiently large local maximum. The arguments in \secref[s]{deterministictheory}
below will be motivated by the need to understand solutions to the
key differential inequality \eqref{dtzstarreadyforlemmas}, proved
in the following proposition.
\begin{prop}
\label{prop:dtzbound2}Let $\ell,\eps\in(0,1)$ and $i\in\{1,\ldots,d\}$,
and fix $\psi,v,u,z$ as in the statement of \lemref{dtzbound1}.
If $t>0$, $x_{*}$ is a local maximum of $z(t,\cdot)$, and
\begin{equation}
z(t,x_{*})\ge4(d+3)+\|\psi(t,\cdot)\|_{\mathcal{C}_{\p_{\eps}}^{2}},\label{eq:zstarbigenough}
\end{equation}
then 
\begin{equation}
\partial_{t}z(t,x_{*})\le-\frac{1}{8}(\langle x_{*}\rangle+K)^{\ell}z(t,x_{*})^{2}+|u(t,x_{*})|\left(\frac{z(t,x_{*})}{\langle x_{*}\rangle+K}+\frac{\|\psi(t,\cdot)\|_{\mathcal{C}_{\p_{\eps}}^{2}}}{(\langle x_{*}\rangle+K)^{\ell-\eps}}\right).\label{eq:dtzstarreadyforlemmas}
\end{equation}
\end{prop}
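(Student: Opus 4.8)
Since the conclusion features the weight $\p_{\ell,K}$, the plan is to invoke \lemref{dtzbound1} with $a(x)=(\langle x\rangle+K)^{-\ell}=\p_{\ell,K}(x)^{-1}$, the choice that makes the dissipative term $-a^{-1}|z+a\partial_{i}\psi_{i}|^{2}$ carry the prefactor $(\langle x\rangle+K)^{\ell}$; I shall also use $\eps<\ell$ and $K$ large (depending on $\ell-\eps$). Write $w=\langle x\rangle+K$, so $a=w^{-\ell}$, $a^{-1}=w^{\ell}\ge 1$, and a direct computation (as for \eqref{abounds}, but keeping the spatial decay) gives $|\nabla\log a|\le\ell/w\le 1/w$ and $|\Delta\log a|\le(d+2)/w$. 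Evaluating \eqref{dtzbound1} at the local maximum $x_{*}$ of $z(t,\cdot)$ and using $\nabla z(t,x_{*})=0$, $\Delta z(t,x_{*})\le 0$ to discard the $\tfrac12\Delta z$, $\nabla\log a\cdot\nabla z$, and $u\cdot\nabla z$ contributions, we obtain, with $z:=z(t,x_{*})$ (which is $\ge 4(d+3)>0$ by \eqref{zstarbigenough}),
\[
\partial_{t}z(t,x_{*})\le-\tfrac12 z\big[|\nabla\log a|^{2}-\Delta\log a\big]-a^{-1}|z+a\partial_{i}\psi_{i}|^{2}+u\cdot\big(z\nabla\log a-a\partial_{ii}\psi\big).
\]

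The first term is harmless: since $z>0$ it is at most $\tfrac12 z|\Delta\log a|\le(d+2)z/2$, and since \eqref{zstarbigenough} forces $z\ge 4(d+3)\ge 4(d+2)$ while $w^{\ell}\ge 1$, this is $\le\tfrac18 w^{\ell}z^{2}$. The transport term reproduces exactly the last term of \eqref{dtzstarreadyforlemmas}: $|u\cdot z\nabla\log a|\le|u|\,z/w$ because $|\nabla\log a|\le 1/w$, while $|u\cdot a\partial_{ii}\psi|\le|u|\,w^{-\ell}\,\|\psi(t,\cdot)\|_{\mathcal{C}_{\p_{\eps}}^{2}}\,\p_{\eps}(x_{*})\le|u|\,\|\psi(t,\cdot)\|_{\mathcal{C}_{\p_{\eps}}^{2}}/w^{\ell-\eps}$, using $\p_{\eps}(x_{*})=(\langle x_{*}\rangle+1)^{\eps}\le w^{\eps}$ (here $K\ge 1$).

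The heart of the matter, and the step I expect to be the main obstacle, is the dissipative term. Since $z+a\partial_{i}\psi_{i}=a(\partial_{i}v_{i}+\partial_{i}\psi_{i})=a\partial_{i}u_{i}$, we have $-a^{-1}|z+a\partial_{i}\psi_{i}|^{2}=-a|\partial_{i}u_{i}|^{2}$; to recover $-\tfrac14 w^{\ell}z^{2}$ one must bound $\partial_{i}u_{i}(t,x_{*})$ below by a fixed fraction of the slope actually being tracked, $\partial_{i}v_{i}(t,x_{*})=z\,\p_{\ell,K}(x_{*})$---that is, the rough part $\psi$ must not be able to cancel it. This is precisely what \eqref{zstarbigenough} buys: it gives $\|\psi(t,\cdot)\|_{\mathcal{C}_{\p_{\eps}}^{2}}\le z$, and since $\p_{\eps}/\p_{\ell,K}\to 0$ when $\eps<\ell$---so that $\p_{\eps}(x_{*})\le\tfrac12\p_{\ell,K}(x_{*})$ for all $x_{*}$ once $K$ is large enough in terms of $\ell-\eps$---one obtains $|\partial_{i}\psi_{i}(t,x_{*})|\le\|\psi(t,\cdot)\|_{\mathcal{C}_{\p_{\eps}}^{2}}\,\p_{\eps}(x_{*})\le\tfrac12 z\,\p_{\ell,K}(x_{*})=\tfrac12\partial_{i}v_{i}(t,x_{*})$. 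Hence $\partial_{i}u_{i}(t,x_{*})\ge\tfrac12\partial_{i}v_{i}(t,x_{*})>0$, so $-a|\partial_{i}u_{i}|^{2}\le-\tfrac14 a(\partial_{i}v_{i})^{2}=-\tfrac14 w^{\ell}z^{2}$. Adding the three bounds gives $\partial_{t}z(t,x_{*})\le(\tfrac18-\tfrac14)w^{\ell}z^{2}+|u(t,x_{*})|\big(z/w+\|\psi(t,\cdot)\|_{\mathcal{C}_{\p_{\eps}}^{2}}/w^{\ell-\eps}\big)$, which is \eqref{dtzstarreadyforlemmas}. Apart from this weight comparison (where $\eps<\ell$ and $K$ large enter), the argument is just substitution into \lemref{dtzbound1} followed by routine estimates on $\log\p_{-\ell,K}$ and its first two derivatives.
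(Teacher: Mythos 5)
Your argument follows the paper's proof in all but one step: you make the same choice $a=\p_{\ell,K}^{-1}$, evaluate \eqref{dtzbound1} at the local maximum, discard the $\Delta z$ and $\nabla z$ terms, and bound the $\log a$-terms and the transport term $u\cdot(z\nabla\log a-a\partial_{ii}\psi)$ exactly as the paper does, so those parts are fine. The genuine difference is the dissipative term. You rewrite $z+a\partial_{i}\psi_{i}=a\,\partial_{i}u_{i}$ and prevent cancellation by the pointwise weight comparison $\p_{\eps}\le\tfrac12\p_{\ell,K}$, which costs you the extra hypotheses $\eps<\ell$ and, more importantly, $K\ge C(\ell-\eps)$ (e.g.\ $(1+K)^{\ell-\eps}\ge2$). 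The proposition as stated imposes no lower bound on $K$, so strictly speaking you prove a weaker statement: for small $K$ (or $\ell-\eps$ small at fixed $K$) your comparison $\p_{\eps}\le\tfrac12\p_{\ell,K}$ fails and your proof gives nothing. The paper avoids this by never splitting off $\partial_{i}u_{i}$: it estimates $a(x_{*})|\partial_{i}\psi_{i}(t,x_{*})|\le a(x_{*})^{1-\eps/\ell}\|\psi(t,\cdot)\|_{\mathcal{C}_{\p_{\eps}}^{2}}\le\|\psi(t,\cdot)\|_{\mathcal{C}_{\p_{\eps}}^{2}}$ (using only $a\le1$ and $\eps\le\ell$, no largeness of $K$), and then uses \eqref{zstarbigenough} to keep $|z+a\partial_{i}\psi_{i}|\ge(z-\|\psi\|_{\mathcal{C}_{\p_{\eps}}^{2}})^{+}\ge\tfrac12 z$, i.e.\ the hypothesis is used to compare $\|\psi\|_{\mathcal{C}_{\p_{\eps}}^{2}}$ with $z$ after the weight has been absorbed, rather than to compare the weights themselves. (Your implicit use of $\eps<\ell$ is not an issue: the paper's step $a^{1-\eps/\ell}\le1$ needs it too, and it holds in the application by \eqref{defineexponents}.) In practice the damage is limited: in \lemref{cutitininhalf} the exponents are fixed with $\ell-\eps>m/2$ and $K$ is taken $\ge K_{0}(d,M)$ anyway, so one could absorb your extra requirement into $K_{0}$; but as a proof of \propref{dtzbound2} as stated, the small-$K$ case is left uncovered, and you should either prove it the paper's way or record the largeness assumption explicitly in the statement.
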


\begin{proof}
Define $a(x)=(\langle x\rangle+K)^{-\ell}$. %
{} As in \eqref{abounds}, it is simple to estimate that
\begin{align}
|\nabla(\log a)(x)| & \le(\langle x\rangle+K)^{-1}\le1; & |\Delta(\log a)(x)| & \le d+2.\label{eq:logaderivbounds}
\end{align}
Note also that 
\begin{equation}
a(x_{*})^{\eps/\ell}|\partial_{i}\psi_{i}(t,x_{*})|\le\sup_{x\in\mathbf{R}^{d}}\frac{|\partial_{i}\psi_{i}(t,x)|}{(\langle x\rangle+K)^{\eps}}\le\sup_{x\in\mathbf{R}^{d}}\frac{|\partial_{i}\psi_{i}(t,x)|}{\langle x\rangle^{\eps}}\le\|\psi(t,\cdot)\|_{\mathcal{C}_{\p_{\eps}}^{2}}.\label{eq:axepskkpsi}
\end{equation}
Considering \eqref{dtzbound1}, noting that $\nabla z(t,x_{*})=0$
and $\Delta z(t,x_{*})\le0$ since $x_{*}$ is a local maximum of
$z$, and applying \eqref{logaderivbounds} and \eqref{axepskkpsi},
we obtain
\begin{equation}
\partial_{t}z(t,x_{*})\le\tfrac{1}{2}(d+3)|z(t,x_{*})|-a(x_{*})^{-1}|(z(t,x_{*})-\|\psi\|_{\mathcal{C}_{\p_{\eps}}^{2}})^{+}|^{2}+|u(t,x_{*})|\left(\frac{\ell|z(t,x_{*})|}{\langle x\rangle+K}+a(x_{*})^{1-\eps/\ell}\|\psi(t,\cdot)\|_{\mathcal{C}_{\p_{\eps}}^{2}}\right).\label{eq:dtzxstarreadytosimplify}
\end{equation}
The assumption \eqref{zstarbigenough} yields the inequalities $(z(t,x_{*})-\|\psi\|_{\mathcal{C}_{\p_{\eps}}^{2}})^{+}\ge\tfrac{1}{2}z(t,x_{*})$
and $\tfrac{1}{2}(d+3)|z(t,x_{*})|\le\tfrac{1}{8}z(t,x_{*})^{2}$,
and plugging these and the assumption that $\ell<1$ into \eqref{dtzxstarreadytosimplify}
implies \eqref{dtzstarreadyforlemmas}.
\end{proof}

\section{\label{sec:deterministictheory}The Sobolev-type inequality}

The leading $-z^{2}$ term in \eqref{dtzstarreadyforlemmas}, intuitively,
should bring the positive part of $z$ down to a quantity of order-$1$
in time of order-$1$, independently of the initial conditions. In
order to make this rigorous, of course, we need to control the second
term of \eqref{dtzstarreadyforlemmas}, and in particular control
$u(t,x_{*})$. Besides $z(t,x_{*})$, the only quantities available
to control $u(t,x_{*})$ are $L^{2}$ norms of $u(t,\cdot)$ with
respect to finite measures, whose second moments can be \emph{a priori}
controlled by \propref{L2bound} and Fubini's theorem. In this section,
we prove a bound for general functions $u$ of gradient type which
will be sufficient for our purposes. It is closely related to the
theory of semiconvexity (see e.g. \cite{CS04}) but we give a self-contained
proof. If $f:\mathbf{R}^{d}\to\mathbf{R}$ is twice-differentiable,
define $\Delta^{+}f=\sum_{i=1}^{d}(\partial_{ii}f)^{+}$. The goal
of this section is to prove the following proposition, which we will
apply later on with $\nabla f=u(t,\cdot)$.
\begin{prop}
\label{prop:deterministic-bound}Let $m>0$ and $\alpha>1$ satisfy
$m>d/(2\alpha)$, and put $\ell=m-1+1/\alpha$. There is a constant
$C<\infty$ and a finite measure $\nu$ on $\mathbf{R}^{d}$ so that
if $f:\mathbf{R}^{d}\to\mathbf{R}$ is twice-differentiable and $K\in[0,\infty)$,
then
\begin{equation}
\|\nabla f\|_{\mathcal{C}_{\p_{m,K}}}\le C(\|\Delta^{+}f\|_{\mathcal{C}_{\p_{\ell,K}}}+\|\nabla f\|_{L^{2}(\mathbf{R}^{d};\nu)}),\label{eq:deterministicbound}
\end{equation}
in which we use the notation $\|f\|_{L^{2}(\mathbf{R}^{d};\nu)}=\left(\int_{\mathbf{R}^{d}}|f(x)|^{2}\,\nu(\dif x)\right)^{1/2}.$
\end{prop}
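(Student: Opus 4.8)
The plan is to bound $|\nabla f(x_0)|$ at an arbitrary point $x_0$ by exploiting the one-sided control on $\Delta^+ f$ to propagate information about $\nabla f$ away from $x_0$, and then using the $L^2(\nu)$ bound to say that $\nabla f$ cannot be large everywhere on a set of positive $\nu$-mass. The key structural fact is that $f$ being of gradient type means the directional derivatives $\partial_i f$ are themselves derivatives of $f$ along the coordinate axes, so a bound on $(\partial_{ii}f)^+$ becomes a \emph{semiconcavity-type} estimate on each slice: along the line $t\mapsto f(x+te_i)$, the second derivative is bounded above, hence the first derivative $t\mapsto \partial_i f(x+te_i)$ cannot increase faster than the prescribed rate. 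Concretely, if $\partial_i f(x_0)=:s$ is large and positive, then for $t$ in an interval of length comparable to $s/\|\Delta^+f\|_{\mathcal C_{\p_{\ell,K}}}$ (roughly; with the weight evaluated near $x_0$), $\partial_i f(x_0+te_i)$ stays bounded below by $\tfrac12 s$, so $f(x_0+te_i)-f(x_0)\gtrsim s^2/\|\Delta^+f\|_{\mathcal C_{\p_{\ell,K}}}$. Running $t$ in the other direction gives the opposite monotonicity. The point is to convert a large value of $|\nabla f(x_0)|$ into a large \emph{oscillation} of $f$ over a controlled neighborhood, uniformly over the sign pattern of the components of $\nabla f$.

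Next I would turn that oscillation bound into a lower bound on $\|\nabla f\|_{L^2(\mathbf R^d;\nu)}$. Having shown $f$ changes by at least $\delta := c\,|\nabla f(x_0)|^2 / \|\Delta^+f\|_{\mathcal C_{\p_{\ell,K}}}$ across a box (or segment) $B$ of sidelength $r$ comparable to $|\nabla f(x_0)|/\|\Delta^+f\|_{\mathcal C_{\p_{\ell,K}}}$ centered near $x_0$, the mean value theorem forces $\sup_{B}|\nabla f| \gtrsim \delta/r \gtrsim |\nabla f(x_0)|$ — but that alone is circular. The resolution, and the real content, is the ``stretching the mesh'' idea flagged in the introduction: one does not sample $f$ at just two nearby points but compares $f$ along a long segment, using that $\partial_i f$ stays of order $|\nabla f(x_0)|$ over a segment whose \emph{length grows with} $|\nabla f(x_0)|$. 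Thus the set on which $|\nabla f|\gtrsim |\nabla f(x_0)|$ has $\nu$-measure that does \emph{not} shrink as $|\nabla f(x_0)|\to\infty$ provided $\nu$ has polynomially decaying density matched to the exponents: choosing $\nu(\dif x) = \p_{-2\beta}(x)\,\dif x$ for suitable $\beta$, the integral $\int_B |\nabla f|^2\,\nu(\dif x)$ over such a box $B$ of side $\sim R:=|\nabla f(x_0)|$ at distance $\sim\langle x_0\rangle$ from the origin is at least $\sim R^2 \cdot \langle x_0\rangle^d\langle x_0\rangle^{-2\beta}$ times a constant (for $\langle x_0\rangle\gtrsim R$; when $R\gtrsim \langle x_0\rangle$ the box reaches near the origin and one gets an even better, in fact $\langle x_0\rangle$-independent, bound). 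The weighted target $\|\nabla f\|_{\mathcal C_{\p_{m,K}}}$ asks for $|\nabla f(x_0)| \lesssim \langle x_0\rangle^m$ plus a constant; balancing the exponents $d/2 - \beta$ against $m$ and using $\ell = m-1+1/\alpha$ with $m > d/(2\alpha)$ is exactly what makes the arithmetic close — this is where the hypothesis $m>d/(2\alpha)$, equivalently $d<4$ in the application, enters.

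The main obstacle, and where I would spend the most care, is the bookkeeping with the weights $\p_{m,K}$, $\p_{\ell,K}$ when $K$ is large and $x_0$ ranges over all of $\mathbf R^d$: the segment along which I propagate $\partial_i f$ may leave the region where $\p_{\ell,K}(x)\approx\p_{\ell,K}(x_0)$, so the differential inequality $\tfrac{d}{dt}\partial_i f(x_0+te_i) \le \|\Delta^+f\|_{\mathcal C_{\p_{\ell,K}}}\,\p_{\ell,K}(x_0+te_i)$ must be integrated with the varying weight, and one must check the segment length chosen is still $\gtrsim |\nabla f(x_0)|/\p_{\cdot}(x_0)$-ish and that the box stays in a region of controlled $\nu$-density. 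I would handle this by a two-case split (segment stays in the annulus $|x|\sim\langle x_0\rangle$, versus segment reaches the origin region, i.e. $|\nabla f(x_0)|$ very large relative to $\langle x_0\rangle$), and by absorbing all $K\ge 0$ uniformly using $\p_{\ell,K}\ge\p_\ell$ monotonicity — note the claim is uniform in $K\in[0,\infty)$, which actually helps since $K$ only makes the gradient of the weight smaller. A secondary technical point is that $f$ is only assumed twice-differentiable, not $C^2$, so I would phrase the ``$\partial_i f$ is Lipschitz-from-above along lines'' step via the fundamental theorem of calculus for $\partial_i f(x_0+\cdot\,e_i)$, whose derivative $\partial_{ii}f$ exists and is $\le \|\Delta^+f\|_{\mathcal C_{\p_{\ell,K}}}\p_{\ell,K}$ a.e.\ along the line, which is all that is needed. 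Once the pointwise estimate $|\nabla f(x_0)|\le C(\|\Delta^+f\|_{\mathcal C_{\p_{\ell,K}}}^{1/2}\|\nabla f\|_{L^2(\nu)}^{1/2}\langle x_0\rangle^{m} \,\vee\, \text{(lower-order terms)})$ is in hand, a Young's-inequality split $ab\le \tfrac12 a^2+\tfrac12 b^2$ turns the geometric-mean form into the additive form \eqref{deterministicbound}, and taking the supremum over $x_0$ with the weight divided out gives the proposition.
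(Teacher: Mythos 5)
Your one-dimensional step is fine and matches the spirit of the paper: a bound on $(\partial_{ii}f)^{+}$ lets you propagate $\partial_{i}f$ along the $\mathbf{e}_{i}$-line through $x_{0}$, so a large value $|\partial_i f(x_0)|\sim R$ forces an oscillation of $f$ of order $R^{2}/\|\Delta^{+}f\|$ along a segment of length $\sim R/\|\Delta^{+}f\|$ (weights aside). The genuine gap is the next step, where you pass from this \emph{line} information to a lower bound on $\int_{B}|\nabla f|^{2}\,\dif\nu$ over a full box $B$ of side $\sim R$ with $\nu$ absolutely continuous ($\nu(\dif x)=\p_{-2\beta}(x)\dif x$). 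Nothing in the hypotheses controls how $\partial_{i}f$ varies in directions transverse to $\mathbf{e}_{i}$: the mixed partials $\partial_{ij}f$, $i\ne j$, do not appear in $\Delta^{+}f$ at all, so knowing $\partial_{i}f\gtrsim R$ on a one-dimensional segment (a Lebesgue-null set when $d\ge2$, which is the whole point here) gives no lower bound whatsoever on an integral against an absolutely continuous measure, and your assertion that $|\nabla f|\gtrsim R$ on a positive fraction of the box $B$ is never justified. This is exactly the multidimensional difficulty the paper has to work for: it proves the local estimate \propref{finallocalest} by induction on the dimension, extruding a controlled set one coordinate direction at a time (\lemref{ddimlosc}, \propref{oscbound-noninductive}), and the price of having only diagonal one-sided Hessian control is that the conclusion involves the values of $\nabla f$ at the $4^{d}$ prescribed grid points $\ddddot{Q}(x,9r)$ rather than on a set of positive Lebesgue measure.

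Correspondingly, the paper's measure $\nu$ is \emph{atomic}: Dirac masses of weight $\p_{m}(k^{\alpha})^{-2}$ placed on a stretched mesh of spacing $\sim k^{\alpha-1}\sim|x|^{1-1/\alpha}$ (cubes $Q(x_{k,j},k^{\alpha-1})$ covering the annulus $|x|\in[k^{\alpha},(k+1)^{\alpha}]$), so that $\|\nabla f\|_{L^{2}(\mathbf{R}^{d};\nu)}$ directly dominates $|\nabla f(y)|/\p_{m}(k^{\alpha})$ at every grid point $y$; the hypothesis $m>d/(2\alpha)$ enters only to make $\nu$ finite, and $\ell=m-1+1/\alpha$ is exactly what makes the factor $r\sim k^{\alpha-1}$ in \propref{finallocalest} compatible with the weights. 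Your final Young's-inequality reduction is harmless, and your remarks on handling the weights and the mere twice-differentiability of $f$ are reasonable, but without an argument that controls the transverse behavior (or a switch to an atomic $\nu$ supported on a grid fine enough for a local sup-bound like \propref{finallocalest}), the central step of your proposal does not go through.
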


\propref{deterministic-bound} follows from the following local version.
For $x\in\mathbf{R}^{d}$ and $r>0$, define the cube $Q(x,r)=\{y\in\mathbf{R}^{d}\;:\;|x-y|_{\ell^{\infty}}\le r\}$,
where $|x|_{\ell^{\infty}}=\max\limits _{i\in\{1,\ldots d\}}|x_{i}|$,
and the grid $\ddddot{Q}(x,r)=Q(x,r)\cap[x+(\tfrac{1}{3}r+\frac{2}{3}r\mathbf{Z})^{d}]$
of $4^{d}$ points in a cubic lattice in $Q(x,r)$.
\begin{prop}
\label{prop:finallocalest}If $f:\mathbf{R}^{d}\to\mathbf{R}$ is
twice-differentiable, $x\in\mathbf{R}^{d}$, and $r\in(0,\infty)$,
then
\[
\|\nabla f\|_{\mathcal{C}(Q(x,r))}\le84d(6r\|\Delta^{+}f\|_{\mathcal{C}(Q(x,9r))}+\|\nabla f\|_{\mathcal{C}(\ddddot{Q}(x,9r))}).
\]
\end{prop}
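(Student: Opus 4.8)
The plan is to exploit the gradient structure: since $\nabla f$ is a gradient, $f$ is determined up to a constant by $\nabla f$, and the key observation is that along any coordinate direction the function $t\mapsto f(x+te_i)$ has second derivative $\partial_{ii}f$, so $\partial_{ii}f \le \|\Delta^+ f\|_{\mathcal{C}(Q(x,9r))}$ pointwise on the relevant cube. In other words, on each coordinate segment $f$ is ``$c$-semiconcave'' with $c = \|\Delta^+ f\|_{\mathcal{C}(Q(x,9r))}$. The standard consequence is a one-dimensional interpolation inequality: if $g:[-a,a]\to\mathbf{R}$ satisfies $g'' \le c$, then $|g'(0)|$ is controlled by $ca$ together with the values of $g$ at, say, three well-spaced points of $[-a,a]$ (one computes $g'(0)$ via a divided-difference/Taylor expansion, and the error from $g''$ is $O(ca)$). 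Averaging or taking sups, this lets us bound each partial derivative $\partial_i f$ at a point $y\in Q(x,r)$ by $O(r\|\Delta^+ f\|)$ plus differences of values of $f$ sampled on a coarse grid inside $Q(x,9r)$.

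Concretely, first I would fix a point $y\in Q(x,r)$ and an index $i$, and consider the segment $y + te_i$ for $t$ ranging over an interval of half-length comparable to $r$ whose endpoints and midpoint, after appropriate choices, can be arranged to lie in (or near) the grid $\ddddot{Q}(x,9r)$ — the grid spacing $\tfrac{2}{3}\cdot 9r = 6r$ and offset $\tfrac13\cdot 9r = 3r$ are chosen precisely so that every point of $Q(x,r)$ is ``surrounded'' by grid points of $\ddddot Q(x,9r)$ at scale $\sim r$ in each coordinate direction, and all intermediate segments stay inside $Q(x,9r)$ where the bound on $\partial_{ii}f$ holds. Second, the one-dimensional semiconcavity lemma gives $|\partial_i f(y)| \le C(r\|\Delta^+ f\|_{\mathcal{C}(Q(x,9r))} + \text{(differences of }f\text{ at grid points)})$. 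Third, to convert differences of \emph{values} of $f$ into the quantity $\|\nabla f\|_{\mathcal{C}(\ddddot Q(x,9r))}$ that actually appears in the statement, I would write each such difference $f(p)-f(q)$ between neighboring grid points as a line integral of $\nabla f$ along a polygonal path through grid points contained in $Q(x,9r)$; each leg contributes at most (leg length)$\,\times\,\|\nabla f\|_{\mathcal{C}(\ddddot Q(x,9r))}$ after again using semiconcavity to control $\nabla f$ along the leg by its values at the two endpoints (this is the same one-dimensional trick: $|g'(t)| \le \tfrac{1}{2a}|g(a)-g(-a)| + ca$ type bounds, now applied on each leg, so interior values are dominated by grid-endpoint values plus an $O(r\|\Delta^+f\|)$ error). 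Summing over the $O(1)$ legs and the $d$ coordinates, and tracking the constants, yields the stated bound with explicit constant $84d$ (the factors $6$ and $84d$ come from bookkeeping: number of legs, path lengths in units of $r$, and the $d$ coordinate directions).

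The main obstacle I expect is the bookkeeping of \emph{which} grid points and paths to use so that (a) every leg stays inside $Q(x,9r)$, where the hypothesis on $\Delta^+ f$ is available, (b) the paths connect $y$ to the grid and connect grid points to each other using only segments parallel to coordinate axes of length $O(r)$, and (c) the resulting constant comes out as claimed rather than merely ``some $C(d)$''. The geometric content is elementary but getting the explicit constant $84d$ requires care: one must count that a point of $Q(x,r)$ needs at most a bounded number ($\sim 2$ or $3$) of axis-parallel hops of length $\le 9r$ to reach a point where a clean divided-difference estimate applies, and that propagating the value-differences back to $\ddddot Q(x,9r)$ costs a further bounded number of hops. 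I would organize this by first proving the clean one-dimensional lemma (semiconcave $g$ on $[-a,a]$: $|g'(0)| \le \tfrac{3}{2a}\max_{|t|\le a}|g(t)| \cdot(\text{small const}) + O(ca)$, or a sharper explicit version), then applying it mechanically along coordinate lines, and only at the end collecting constants.
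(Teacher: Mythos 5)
Your one-dimensional ingredients are exactly the ones the paper uses (they are \lemref{boundwithderivtop}, \lemref{onedimlderiv}, and \lemref{onedimlosc}: semiconvexity $\partial_{ii}f\le\|\Delta^{+}f\|$ along coordinate lines, divided differences, and the endpoint-max bound for a function with a one-sided bound on its derivative). But the multidimensional assembly step is where the real content of the proposition lies, and your plan for it has a circularity that is more than ``bookkeeping.'' To bound $\partial_{i}f(y)$ for an \emph{arbitrary} $y\in Q(x,r)$, the divided-difference lemma forces you to use values of $f$ at points of the coordinate line $y+\mathbf{R}\mathbf{e}_{i}$, and for generic $y$ this line contains no point of $\ddddot{Q}(x,9r)$. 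Your device for converting such value-differences into $\|\nabla f\|_{\mathcal{C}(\ddddot{Q}(x,9r))}$ --- polygonal paths whose legs are controlled by the endpoint-anchored bound $\|g'\|_{\mathcal{C}}\le\max\{|g'(\text{endpoints})|\}+O(ca)$ --- works only when the extension of each leg terminates at grid points, i.e.\ when all $d-1$ transverse coordinates of the leg are grid coordinates. A staircase from an arbitrary $y$ (or between two points on the line through $y$) necessarily has legs violating this, so the bound reintroduces $|\nabla f|$ at non-grid points, which is the quantity being estimated; and the alternative of bounding $|\nabla f|$ on such a leg by another divided difference produces $\osc(f;Q')$ on a larger cube with a constant that is not a contraction, so it does not close either.

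The paper breaks this circularity with a genuinely different organizational device: an induction on the dimension (\lemref{ddimlosc} and \propref{oscbound-noninductive}). One controls $\osc(f;[0,1]^{i}\times\{\mathbf{y}\})$ for grid points $\mathbf{y}$ of the remaining $d-i$ coordinates, extruding one coordinate at a time. At each stage the only evaluations of $\nabla f$ occur on a single ``hub'' fiber chosen to pass through a grid point (so its endpoints are grid points, and \lemref{boundwithderivtop} applies legitimately), while the contribution of all the other, non-grid fibers is absorbed into the oscillation of the lower-dimensional slices $S+\ell\mathbf{e}_{i}$, which is supplied by the inductive hypothesis rather than by any pointwise gradient evaluation. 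Your staircase argument does correctly handle grid-to-grid differences of $f$, but that alone cannot reach $\|\nabla f\|_{\mathcal{C}(Q(x,r))}$; you would need to replace the path-integral step with an induction of this kind (or an equivalent mechanism for controlling $\osc(f;\cdot)$ on full-dimensional sets using only grid-anchored gradient data).
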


Before proving \propref{finallocalest}, we show how it implies \propref{deterministic-bound}.
\begin{proof}[Proof of \propref{deterministic-bound}.]
For $k=0,1,2,\ldots$, let $A_{k}=\{x\in\mathbf{R}^{d}\;:\;|x|\in[k^{\alpha},(k+1)^{\alpha}]\}$,
and let $x_{k,1},\ldots,x_{k,m_{k}}\in\mathbf{R}^{d}$ be chosen so
that the cubes $Q_{k,j}\coloneqq Q(x_{k,j},k^{\alpha-1}),$ $j=1,\ldots,m_{k}$,
cover $A_{k}$ and $m_{k}\le Ck^{d-1}$ for a constant $C$ independent
of $k$. Define $Q'_{k,j}=Q(x_{k,j},9k^{\alpha-1})$, $\ddddot{Q}'_{k,j}=\ddddot{Q}(x_{k,j},9k^{\alpha-1})$,
and 
\[
\nu=\sum_{k=0}^{\infty}\frac{1}{\p_{m}(k^{\alpha})^{2}}\sum_{j=0}^{m_{k}}\sum_{y\in\ddddot{Q}'_{k,j}}\delta_{y},
\]
where $\delta_{y}$ denotes a Dirac $\delta$ measure at $y$. We
note that 
\[
\nu(\mathbf{R}^{d})=\sum_{k=0}^{\infty}\frac{1}{\p_{m}(k^{\alpha})^{2}}\sum_{j=0}^{m_{k}}\sum_{y\in\ddddot{Q}'_{k,j}}1\le\sum_{k=0}^{\infty}\frac{1}{\p_{m}(k^{\alpha})^{2}}\sum_{j=0}^{m_{k}}4^{d}\le C\sum_{k=0}^{\infty}\langle k\rangle^{-2\alpha m+d-1}<\infty,
\]
for some constant $C$ (depending on the dimension $d$), where the
penultimate inequality comes from the bound $m_{k}\le Ck^{d-1}$,
and the fact that the final sum is finite comes from the assumption
$m>d/(2\alpha)$.

By \propref{finallocalest}, we have for each $k,j$ that
\[
\|\nabla f\|_{\mathcal{C}(Q_{k,j})}\le84d\left(6k^{\alpha-1}\|\Delta^{+}f\|_{\mathcal{C}(Q'_{k,j})}+\|\nabla f\|_{\mathcal{C}(\ddddot{Q}'_{k,j})}\right),
\]
so
\begin{align*}
\frac{\|\nabla f\|_{\mathcal{C}(Q(x_{k,j},k^{\alpha-1}))}}{\p_{m,K}(k^{\alpha})} & \le84d\left(6k^{\alpha-1}\frac{\|\Delta^{+}f\|_{\mathcal{C}(Q'_{k,j})}}{\p_{m,K}(k^{\alpha})}+\frac{\|\nabla f\|_{\mathcal{C}(\ddddot{Q}'_{k,j})}}{\p_{m}(k^{\alpha})}\right)\le C\left(\|\Delta^{+}f\|_{\mathcal{C}_{\p_{m-1+1/\alpha,K}}}+\|\nabla f\|_{L^{2}(\mathbf{R}^{d};\nu)}\right)
\end{align*}
for some $C<\infty$ depending only on $d$. Taking a supremum over
all $k,j$, we obtain \eqref{deterministicbound}.
\end{proof}
We will spend the rest of this section proving \propref{finallocalest}.
Define, for $f:\mathbf{R}^{d}\to\mathbf{R}$ and $S\subset\mathbf{R}^{d}$,
the \emph{oscillation} $\osc(f;S)=\sup_{S}f-\inf_{S}f$. Our proof
relies on the observation that if the oscillation of $f$ is controlled
on the set of four points lying on a line, and we have a one-sided
bound on the second derivative of $f$ in the direction of the line,
then we can obtain a bound on the oscillation of $f$ on the segment
connecting the middle two points, and also on the directional derivative
of $f$ in the direction of the line on this segment. We make this
precise in \lemref[s]{onedimlderiv}~and~\ref{lem:onedimlosc} below.
Thus we will seek to control the oscillation of a multidimensional
function $f$ on a cube, which we do by induction on the dimension,
successively extruding a controlled set in each coordinate direction.
First we establish the following elementary lemma about oscillation.
\begin{lem}
\label{lem:combinemanyoscs}If $S_{0}\subset\mathbf{R}^{d}$ and $(S_{i})_{i\in I}$
is a family of subsets of $\mathbf{R}^{d}$, indexed by some index
set $I\not\ni0$, and $S_{0}\cap S_{i}\ne\emptyset$ for each $i\in I$,
and we put $S=S_{0}\cup\bigcup_{i\in I}S_{i}$, then we have
\begin{equation}
\osc(f;S)\le\osc(f;S_{0})+2\sup_{i\in I}\osc(f;S_{i}).\label{eq:combinemanyoscs}
\end{equation}
\end{lem}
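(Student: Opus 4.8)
The plan is to prove \lemref{combinemanyoscs} by a direct supremum/infimum estimate. Write $M_0 = \sup_{S_0} f$, $m_0 = \inf_{S_0} f$, and $\omega = \sup_{i\in I}\osc(f;S_i)$. The key point is that every point of $S$ lies either in $S_0$ or in some $S_i$, and in the latter case it is connected to $S_0$ through the nonempty intersection $S_0\cap S_i$, so its $f$-value cannot be more than $\omega$ above $M_0$ or more than $\omega$ below $m_0$.

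Concretely, first I would show $\sup_S f \le M_0 + \omega$. Given $x\in S$, if $x\in S_0$ then $f(x)\le M_0$; if $x\in S_i$ for some $i$, pick any point $y\in S_0\cap S_i$, and then $f(x) \le f(y) + \osc(f;S_i) \le M_0 + \omega$. Symmetrically, $\inf_S f \ge m_0 - \omega$: for $x\in S_i$ and $y\in S_0\cap S_i$ one has $f(x) \ge f(y) - \osc(f;S_i) \ge m_0 - \omega$. Subtracting the two bounds gives
\begin{equation*}
\osc(f;S) = \sup_S f - \inf_S f \le (M_0+\omega) - (m_0-\omega) = \osc(f;S_0) + 2\omega,
\end{equation*}
which is exactly \eqref{combinemanyoscs}.

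There is no real obstacle here; the only point requiring a word of care is that the intermediary point $y$ used in the two one-sided estimates need not be the same, which is harmless because each estimate is carried out independently, and that $I$ may be infinite, so one should take the supremum over $i$ at the end rather than assume a maximizer exists — this is why the statement is phrased with $\sup_{i\in I}$ and the argument above never selects a single "worst" $i$. The lemma holds vacuously (with the convention $\sup\emptyset = 0$, or simply because then $S=S_0$) if $I=\emptyset$, so no separate case is needed.
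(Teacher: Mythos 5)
Your proof is correct and is essentially the paper's argument in a slightly different arrangement: the paper bounds $|f(x_1)-f(x_2)|$ for two arbitrary points of $S$ by routing each through a point of $S_0\cap S_{i_j}$, while you bound $\sup_S f$ and $\inf_S f$ separately through such intersection points; both hinge on exactly the same use of the nonempty intersections and yield $\osc(f;S_0)+2\sup_{i}\osc(f;S_i)$.
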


\begin{proof}
Let $x_{1},x_{2}\in S$. Choose $i_{1},i_{2}\in\{0\}\cup I$ so that
$x_{i_{j}}\in S_{i_{j}}$, $i=1,2$, and select $y_{j}\in S_{0}\cap S_{i_{j}}$
arbitrarily. Then we have
\[
|f(x_{1})-f(x_{2})|\le|f(x_{1})-f(y_{1})|+|f(y_{1})-f(y_{2})|+|f(y_{2})-f(y_{2})|\le\osc(f;S_{i_{1}})+\osc(f;S_{0})+\osc(f;S_{i_{2}}).
\]
This implies \eqref{combinemanyoscs}.
\end{proof}

\subsection{One-dimensional inequalities}

Before we carry out our induction procedure in \subsecref{dimensioninduction},
we prove the necessary statements for functions of one variable. The
following inequalities are all simple consequences of the fundamental
theorem of calculus.
\begin{lem}
\label{lem:boundwithderivtop}Suppose that $f:[0,1]\to\mathbf{R}$
is differentiable. Then we have
\begin{equation}
\|f\|_{\mathcal{C}([0,1])}\le\max\{|f(0)|,|f(1)|\}+\|(f')^{+}\|_{\mathcal{C}([0,1])}.\label{eq:boundwithderivtop}
\end{equation}
\end{lem}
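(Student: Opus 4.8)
The plan is to use compactness of $[0,1]$ to pick a point $x_{0}\in[0,1]$ at which $|f|$ attains its maximum, and then to travel from $x_{0}$ to whichever endpoint of $[0,1]$ makes the \emph{one-sided} derivative bound point in the useful direction. If $\|(f')^{+}\|_{\mathcal{C}([0,1])}=\infty$ there is nothing to prove, so I may assume this quantity is finite, which means $f'(x)\le(f'(x))^{+}\le\|(f')^{+}\|_{\mathcal{C}([0,1])}$ for every $x\in[0,1]$.

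First I would handle the case $f(x_{0})\ge0$, so that $\|f\|_{\mathcal{C}([0,1])}=f(x_{0})$. Applying the mean value theorem on the interval $[0,x_{0}]$ gives a point $\xi$ with $f(x_{0})-f(0)=f'(\xi)x_{0}$. Since $0\le x_{0}\le1$ and $(f'(\xi))^{+}\ge0$, we get $f'(\xi)x_{0}\le(f'(\xi))^{+}x_{0}\le(f'(\xi))^{+}\le\|(f')^{+}\|_{\mathcal{C}([0,1])}$, hence $f(x_{0})\le f(0)+\|(f')^{+}\|_{\mathcal{C}([0,1])}\le|f(0)|+\|(f')^{+}\|_{\mathcal{C}([0,1])}$.

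The remaining case $f(x_{0})<0$, where $\|f\|_{\mathcal{C}([0,1])}=-f(x_{0})$, is handled symmetrically but by traveling to the right endpoint: the mean value theorem on $[x_{0},1]$ produces $\xi$ with $f(1)-f(x_{0})=f'(\xi)(1-x_{0})$, so $-f(x_{0})=-f(1)+f'(\xi)(1-x_{0})\le-f(1)+\|(f')^{+}\|_{\mathcal{C}([0,1])}\le|f(1)|+\|(f')^{+}\|_{\mathcal{C}([0,1])}$, using $0\le1-x_{0}\le1$ as before. In either case $\|f\|_{\mathcal{C}([0,1])}\le\max\{|f(0)|,|f(1)|\}+\|(f')^{+}\|_{\mathcal{C}([0,1])}$, which is \eqref{boundwithderivtop}.

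There is essentially no obstacle here: the only point requiring care is that the hypothesis controls only $(f')^{+}$ and not $|f'|$, so one must walk \emph{toward} $0$ exactly when $f(x_{0})\ge0$ and \emph{toward} $1$ exactly when $f(x_{0})<0$; with either other choice the sign of the contribution from $f'$ would be uncontrolled.
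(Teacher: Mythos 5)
Your proof is correct and rests on the same directional idea as the paper's: deviations of $f$ from the endpoint values are controlled by $(f')^{+}$ alone, provided one moves toward $0$ for upper bounds and toward $1$ for lower bounds. The only (cosmetic) difference is that the paper proves the bound at every $x$ by writing $f(x)=f(0)+\int_{0}^{x}f'\le f(0)+\int_{0}^{x}(f')^{+}$ and $f(x)=f(1)-\int_{x}^{1}f'\ge f(1)-\int_{x}^{1}(f')^{+}$, whereas you apply the mean value theorem once at a maximizer of $|f|$ after splitting on the sign of $f$ there (with the degenerate cases $x_{0}\in\{0,1\}$ being trivial).
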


\begin{proof}
We have, for each $x\in[0,1]$, that
\begin{align*}
f(1)-\int_{x}^{1}(f'(y))^{+}\,\dif y\le f(1)-\int_{x}^{1}f'(y)\,\dif y=f(x) & =f(0)+\int_{0}^{x}f'(y)\,\dif y\le f(0)+\int_{0}^{x}(f'(y))^{+}\,\dif y,
\end{align*}
so \eqref{boundwithderivtop} follows from the resulting estimate
\[
|f(x)|\le\max\{|f(0)|,|f(1)|\}+\int_{0}^{1}(f'(y))^{+}\,\dif y\le\max\{|f(0)|,|f(1)|\}+\|(f'(y))^{+}\|_{\mathcal{C}([0,1])}.\qedhere
\]
\end{proof}
\begin{lem}
\label{lem:onedimlderiv}Suppose that $f:[-1,2]\to\mathbf{R}$ is
twice-differentiable. Then we have
\begin{equation}
\|f'\|_{\mathcal{C}([0,1])}\le\osc(f;[-1,2]\cap\mathbf{Z})+\tfrac{1}{2}\|(f'')^{+}\|_{\mathcal{C}([-1,2])}.\label{eq:onedimlosc-1}
\end{equation}
\end{lem}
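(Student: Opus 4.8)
The plan is to prove the pointwise derivative bound \eqref{onedimlosc-1} by combining the fundamental theorem of calculus with a mean value argument to locate a point where $f'$ is controlled, and then transporting that control across $[0,1]$ using the one-sided bound on $f''$. First I would observe that by the mean value theorem, there exists $\xi \in (0,1)$ with $f'(\xi) = f(1) - f(0)$, so that $|f'(\xi)| \le |f(1)-f(0)| \le \osc(f;[-1,2]\cap\mathbf{Z})$. This is the ``seed'' point from which all other values of $f'$ on $[0,1]$ will be estimated.

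Next, for an arbitrary $x \in [0,1]$, I would write $f'(x) = f'(\xi) + \int_\xi^x f''(y)\,\dif y$. To bound the integral from above, note that $\int_\xi^x f''(y)\,\dif y \le \int_\xi^x (f''(y))^+\,\dif y \le \|(f'')^+\|_{\mathcal{C}([-1,2])}$ when $x \ge \xi$ (and similarly with the roles reversed when $x < \xi$, integrating in the other direction), giving an upper bound on $f'(x)$ of the form $\osc(f;[-1,2]\cap\mathbf{Z}) + \|(f'')^+\|_{\mathcal{C}([-1,2])}$. The subtlety — and this is where the factor $\tfrac12$ and the larger interval $[-1,2]$ come in — is obtaining a matching \emph{lower} bound on $f'(x)$, since integrating $f''$ downward only controls $(f'')^+$ in the wrong direction. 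The trick is to find a second comparison point: apply the mean value theorem on $[-1,0]$ to get $\eta_1 \in (-1,0)$ with $f'(\eta_1) = f(0)-f(-1)$, and on $[1,2]$ to get $\eta_2 \in (1,2)$ with $f'(\eta_2) = f(2)-f(1)$, so both $|f'(\eta_1)|$ and $|f'(\eta_2)|$ are bounded by $\osc(f;[-1,2]\cap\mathbf{Z})$. For $x \in [0,1]$, either $x$ lies to the right of $\eta_1$ or to the left of $\eta_2$ (in fact both), and one can bound $f'(x)$ below by $f'(\eta_j) - \int$ of $(f'')^+$ over an interval of length at most $2$ contained in $[-1,2]$; splitting the interval $[\eta_1,\eta_2] \supset [0,1]$ at $x$ and using that the two pieces have total length at most $3$ but one can be chosen of length at most... — more carefully, one writes $f'(x) - f'(\eta_1) = \int_{\eta_1}^x f''$ and $f'(\eta_2) - f'(x) = \int_x^{\eta_2} f''$, adds, and uses $\int_{\eta_1}^{\eta_2}(f'')^+ \le \|(f'')^+\|_{\mathcal{C}([-1,2])}$ times the length; the $\tfrac12$ arises because the worst case splits the available mass of $(f'')^+$ evenly.

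The main obstacle I anticipate is getting the constant $\tfrac12$ exactly right rather than a cruder constant like $1$ or $2$; this requires being careful about which intervals one integrates over and exploiting that a point in $[0,1]$ is within distance $1$ of \emph{both} $[-1,0]$ and $[1,2]$, so that the upward and downward excursions of $f'$ can each be controlled over a sub-interval of length $\le 1$. Concretely, for $x\in[0,1]$: from $f'(\eta_1)=f(0)-f(-1)$ with $\eta_1\in(-1,0)$, we get $f'(x)\ge f'(\eta_1)-\int_{\eta_1}^x(f'')^+ \ge -\osc(f;[-1,2]\cap\mathbf{Z}) - \|(f'')^+\|_{\mathcal{C}([-1,2])}$, but the interval $[\eta_1,x]$ has length up to $2$; to recover $\tfrac12$ one instead argues at the maximizer $x_0$ of $|f'|$ on $[0,1]$, uses the seed point $\xi$ from $f(1)-f(0)$ together with whichever of $\eta_1,\eta_2$ lies on the opposite side of $x_0$, and notes the two integration intervals $[\xi,x_0]$ and $[x_0,\eta]$ (or their reverses) are disjoint and together have length at most $\le 1+1 = 2$, wait — rather, one should use that $\osc$ is taken over four integer points so there are enough comparison values; in any case the clean route is: pick the comparison point $p\in\{\eta_1,\xi,\eta_2\}$ minimizing the distance to $x_0$, so $|x_0-p|\le \tfrac12$ is \emph{not} guaranteed but $|x_0 - p| \le 1$ always and for the sign we actually need, the relevant integral of $(f'')^+$ runs over an interval of length $\le 1$, and a symmetric second estimate over the complementary length-$\le 1$ interval yields, upon averaging, the factor $\tfrac12$. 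I would write this out carefully, noting that the paper's companion Lemma \lemref{onedimlosc} presumably handles the oscillation bound by an analogous FTC argument, so the two proofs share the same skeleton.
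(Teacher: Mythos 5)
Your overall mechanism---manufacture comparison points where $f'$ is controlled by differences of $f$ at integer points, then transport that control across $[0,1]$ using the one-sided bound on $f''$---is the same one-sided-propagation idea as the paper's proof, which instead starts from a maximizer $x_*$ of $|f'|$ on $[0,1]$, reduces (by reflection) to $f'(x_*)\le 0$, propagates $f'(x)\le f'(x_*)+(x-x_*)\|(f'')^+\|_{\mathcal{C}([-1,2])}$ to the right, and integrates over $[1,2]$ to compare with $f(2)-f(1)$. But as written your argument has two genuine gaps. First, the step ``and similarly with the roles reversed when $x<\xi$'' is false: for $x<\xi$ one has $f'(x)=f'(\xi)-\int_x^\xi f''(y)\,\dif y$, and a bound on $(f'')^+$ gives no upper bound on $-\int_x^\xi f''$; one-sided control of $f''$ only propagates upper bounds on $f'$ rightward (equivalently lower bounds leftward), which is exactly why you need the extra points $\eta_1,\eta_2$. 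Second, the factor $\tfrac12$ is never actually obtained: what your two-point scheme honestly yields is $f'(x)\le f'(\eta_1)+(x-\eta_1)\|(f'')^+\|_{\mathcal{C}([-1,2])}$ and $f'(x)\ge f'(\eta_2)-(\eta_2-x)\|(f'')^+\|_{\mathcal{C}([-1,2])}$, i.e.\ the inequality with constant $2$, and the concluding ``averaging'' remark is a hope, not an argument.

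In fact no argument can produce $\tfrac12$: the bound \eqref{onedimlosc-1} is false with that constant. Take $f$ smooth with $f'\equiv 0$ on $[-1,\tfrac12-\eps]$, $f'$ dropping rapidly to $-1$ at $x=\tfrac12$, and $f'(x)=-1+(x-\tfrac12)$ on $[\tfrac12,2]$; then, up to $O(\eps)$ corrections from the smoothing, $f(-1)=f(0)=0$ and $f(1)=f(2)=-\tfrac38$, so $\osc(f;[-1,2]\cap\mathbf{Z})=\tfrac38$, while $\|f'\|_{\mathcal{C}([0,1])}=1$ and $\|(f'')^+\|_{\mathcal{C}([-1,2])}=1$, contradicting $1\le\tfrac38+\tfrac12$. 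The same issue is in fact present in the paper's own final display: since $\int_1^2(x-x_*)\,\dif x=\tfrac32-x_*$ and $x_*\in[0,1]$, that computation really gives the constant $\tfrac32-x_*\le\tfrac32$, not $\tfrac12$. So the right move is not to chase $\tfrac12$ but to prove the lemma with a harmless larger constant---your $\eta_1,\eta_2$ scheme gives $2$, and the paper's scheme, carried out carefully, gives $\tfrac32$; any fixed constant merely changes the numerical factors in \lemref{ddimlosc}, \propref{oscbound-noninductive}, and \propref{finallocalest}, and nothing downstream is affected.
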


\begin{proof}
Let $x_{*}\in[0,1]$ be such that $|f'(x_{*})|=\|f'\|_{\mathcal{C}([0,1])}$.
Assume without loss of generality that $f'(x_{*})\le0$; otherwise,
consider $\tilde{f}(x)=f(1/2-x)$. Then we have, for all $x\in[x_{*},2]$,
that
\[
f'(x)\le f'(x_{*})+\int_{x_{*}}^{x}(f'')^{+}(t)\,\dif t\le-\|f'\|_{\mathcal{C}([0,1])}+(x-x_{*})\|(f'')^{+}\|_{\mathcal{C}([-1,2])}.
\]
This implies that 
\[
\osc(f;[-1,2]\cap\mathbf{Z})\ge-(f(2)-f(1))=-\int_{1}^{2}f'(x)\,\dif x\ge\|f'\|_{\mathcal{C}([0,1])}-\tfrac{1}{2}\|(f'')^{+}\|_{\mathcal{C}([-1,2])},
\]
and \eqref{onedimlosc-1} follows.
\end{proof}
\begin{lem}
\label{lem:onedimlosc}Suppose that $f:[-1,2]\to\mathbf{R}$ is twice-differentiable.
Then we have
\begin{equation}
\osc(f;[0,1])\le\osc(f;[-1,2]\cap\mathbf{Z})+\tfrac{1}{2}\|(f'')^{+}\|_{\mathcal{C}([-1,2])}.\label{eq:onedimlosc}
\end{equation}
\end{lem}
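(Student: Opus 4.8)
The plan is to obtain \eqref{onedimlosc} as an immediate consequence of the already-proved \lemref{onedimlderiv}, together with the trivial remark that on an interval of length one the oscillation of a differentiable function is bounded by the sup-norm of its derivative. Concretely: since $f$ is continuous on the compact interval $[0,1]$, choose $x_{1},x_{2}\in[0,1]$ with $f(x_{1})=\sup_{[0,1]}f$ and $f(x_{2})=\inf_{[0,1]}f$, so that $\osc(f;[0,1])=f(x_{1})-f(x_{2})$. By the mean value theorem there is $\xi$ between $x_{1}$ and $x_{2}$ with $f(x_{1})-f(x_{2})=f'(\xi)(x_{1}-x_{2})$, and since $|x_{1}-x_{2}|\le1$ this gives
\[
\osc(f;[0,1])\le|x_{1}-x_{2}|\,\|f'\|_{\mathcal{C}([0,1])}\le\|f'\|_{\mathcal{C}([0,1])}.
\]

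It then remains only to apply \lemref{onedimlderiv}, which bounds $\|f'\|_{\mathcal{C}([0,1])}$ by $\osc(f;[-1,2]\cap\mathbf{Z})+\tfrac{1}{2}\|(f'')^{+}\|_{\mathcal{C}([-1,2])}$; this is precisely \eqref{onedimlosc}.

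There is no real obstacle here: all the work has already been done in \lemref{onedimlderiv}, and the present statement is simply the form of that estimate—phrased in terms of oscillations rather than derivatives—that the dimension induction below will actually consume, since there one propagates bounds on $\osc(f;\cdot)$ over cubes rather than on $\nabla f$. If one preferred a self-contained proof that does not cite \lemref{onedimlderiv}, one could instead reprise its argument directly: take $x_{*}\in[0,1]$ maximizing $|f'|$, reduce to the case $f'(x_{*})\le0$ via the reflection $x\mapsto f(1/2-x)$, integrate the one-sided bound $f'(x)\le f'(x_{*})+\int_{x_{*}}^{x}(f''(t))^{+}\,\dif t$ over the integer interval $[1,2]\subset[x_{*},2]$ to obtain $\|f'\|_{\mathcal{C}([0,1])}\le\osc(f;[-1,2]\cap\mathbf{Z})+\tfrac{1}{2}\|(f'')^{+}\|_{\mathcal{C}([-1,2])}$, and then finish with the length-one estimate in the display above. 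Folding the argument into \lemref{onedimlderiv} is cleaner, so that is the route I would take.
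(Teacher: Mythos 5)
Your proof is correct and is essentially identical to the paper's: both fix points $x_{1},x_{2}\in[0,1]$ realizing $\osc(f;[0,1])$, use the mean value theorem together with $|x_{1}-x_{2}|\le1$ to bound the oscillation by $\|f'\|_{\mathcal{C}([0,1])}$, and then invoke \lemref{onedimlderiv}. No further comment is needed.
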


\begin{proof}
Fix $0\le x_{1}<x_{2}\le1$ so that $|f(x_{2})-f(x_{1})|=\osc(f;[0,1])$.
The mean value theorem gives an $x_{*}\in(x_{1},x_{2})$ with
\[
|f'(x_{*})|=\frac{|f(x_{2})-f(x_{1})|}{x_{2}-x_{1}}\ge|f(x_{2})-f(x_{1})|=\osc(f;[0,1]),
\]
and then \lemref{onedimlderiv} yields \eqref{onedimlosc}.
\end{proof}

\subsection{Induction on the dimension\label{subsec:dimensioninduction}}

In order to carry out the inductive procedure, we first prove a lemma
controlling the oscillation of a function on the extrusion of a set
in a coordinate direction. Let $\mathbf{e}_{1},\ldots,\mathbf{e}_{d}$
be the standard basis of $\mathbf{R}^{d}$. In this section, we will
add sets $A,B\subset\mathbf{R}^{d}$ in the Minkowski sense, meaning
$A+B=\{a+b\mid a\in A,b\in B\}$, and also define the product of a
set $A\subset\mathbf{R}^{d}$ with a vector $\mathbf{v}$ as $A\mathbf{v}=\{a\mathbf{v}\mid a\in A\}$.
\begin{lem}
\label{lem:ddimlosc}Suppose that $f:\mathbf{R}^{d}\to\mathbf{R}$
is twice-differentiable, $i\in\{1,\ldots,d\}$, and $S\subset\mathbf{R}^{d}$.
Then, for any $\mathbf{x}\in S$,
\begin{equation}
\osc(f;S+[0,1]\mathbf{e}_{i})\le28\|(\partial_{ii}f)^{+}\|_{\mathcal{C}(S+[-1,2]\mathbf{e}_{i})}+3\max_{\ell=-1}^{2}[3|\partial_{i}f(\mathbf{x}+\ell\mathbf{e}_{i})|+2\osc(f;S+\ell\mathbf{e}_{i})].\label{eq:ddimlosc-1}
\end{equation}
\end{lem}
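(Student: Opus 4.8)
The plan is to prove \lemref{ddimlosc} by a direct argument that reduces the $d$-dimensional estimate to the one-dimensional Lemmas \ref{lem:onedimlderiv} and \ref{lem:onedimlosc} along lines parallel to $\mathbf{e}_i$, and then to paste the resulting one-dimensional estimates together using \lemref{combinemanyoscs}. Fix $\mathbf{x}\in S$. For each $\mathbf{s}\in S$, consider the function $g_{\mathbf{s}}:[-1,2]\to\mathbf{R}$ given by $g_{\mathbf{s}}(\ell)=f(\mathbf{s}+\ell\mathbf{e}_i)$; it is twice-differentiable with $g_{\mathbf{s}}''(\ell)=\partial_{ii}f(\mathbf{s}+\ell\mathbf{e}_i)$, so $\|(g_{\mathbf{s}}'')^+\|_{\mathcal{C}([-1,2])}\le\|(\partial_{ii}f)^+\|_{\mathcal{C}(S+[-1,2]\mathbf{e}_i)}$. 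Applying \lemref{onedimlosc} to $g_{\mathbf{s}}$ gives
\[
\osc(f;\{\mathbf{s}\}+[0,1]\mathbf{e}_i)\le\osc(g_{\mathbf{s}};[-1,2]\cap\mathbf{Z})+\tfrac12\|(\partial_{ii}f)^+\|_{\mathcal{C}(S+[-1,2]\mathbf{e}_i)},
\]
and the first term on the right is at most $\max_{\ell,\ell'\in\{-1,0,1,2\}}|f(\mathbf{s}+\ell\mathbf{e}_i)-f(\mathbf{s}+\ell'\mathbf{e}_i)|$, which I will bound by comparing through the base point $\mathbf{x}$: each $f(\mathbf{s}+\ell\mathbf{e}_i)$ differs from $f(\mathbf{x}+\ell\mathbf{e}_i)$ by at most $\osc(f;S+\ell\mathbf{e}_i)$, so the difference of any two such values is at most $\osc(f;S+\ell\mathbf{e}_i)+\osc(f;S+\ell'\mathbf{e}_i)+|f(\mathbf{x}+\ell\mathbf{e}_i)-f(\mathbf{x}+\ell'\mathbf{e}_i)|$. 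For the last piece I use \lemref{onedimlderiv} applied to $g_{\mathbf{x}}$ to control $\|g_{\mathbf{x}}'\|_{\mathcal{C}([0,1])}$ — or more directly, I bound $|f(\mathbf{x}+\ell\mathbf{e}_i)-f(\mathbf{x}+\ell'\mathbf{e}_i)|$ by integrating $\partial_i f$ along the segment from $\mathbf{x}+\ell'\mathbf{e}_i$ to $\mathbf{x}+\ell\mathbf{e}_i$, using the fundamental theorem of calculus together with the one-sided bound $\partial_i f(\mathbf{x}+t\mathbf{e}_i)\le\partial_i f(\mathbf{x}+\ell_0\mathbf{e}_i)+|t-\ell_0|\|(\partial_{ii}f)^+\|$ for a well-chosen reference point $\ell_0\in\{-1,0,1,2\}$, exactly as in the proof of \lemref{onedimlderiv}. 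This produces a bound of the form $3\max_{\ell}|\partial_i f(\mathbf{x}+\ell\mathbf{e}_i)|+C\|(\partial_{ii}f)^+\|$ for the pure-$\mathbf{x}$ oscillation.

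Having bounded $\osc(f;\{\mathbf{s}\}+[0,1]\mathbf{e}_i)$ uniformly over $\mathbf{s}\in S$ by the right-hand side of \eqref{ddimlosc-1} (up to collecting constants), I then invoke \lemref{combinemanyoscs} with $S_0=S$ (a slice at one end of the interval, say $S+0\cdot\mathbf{e}_i=S$, viewed inside $S+[0,1]\mathbf{e}_i$) and with $S_{\mathbf{s}}=\{\mathbf{s}\}+[0,1]\mathbf{e}_i$ for $\mathbf{s}\in S$; each $S_{\mathbf{s}}$ meets $S_0$ at the point $\mathbf{s}$, and their union is exactly $S+[0,1]\mathbf{e}_i$. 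This yields
\[
\osc(f;S+[0,1]\mathbf{e}_i)\le\osc(f;S)+2\sup_{\mathbf{s}\in S}\osc(f;\{\mathbf{s}\}+[0,1]\mathbf{e}_i),
\]
and $\osc(f;S)=\osc(f;S+0\cdot\mathbf{e}_i)$ is one of the terms already present on the right-hand side of \eqref{ddimlosc-1} (with $\ell=0$). Substituting the per-fiber bound and tracking the numerical constants (the $\tfrac12$ and $28$ from the one-dimensional lemmas, the factor $2$ from \lemref{combinemanyoscs}, and the factor-of-$3$ packaging of the four reference-point terms) gives the claimed inequality.

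The main obstacle is purely bookkeeping: getting the explicit constants $28$ and $3$ to come out, and making sure that every oscillation that appears is over a set contained in $S+[-1,2]\mathbf{e}_i$ (so that the $\mathcal{C}(S+[-1,2]\mathbf{e}_i)$ norm of $(\partial_{ii}f)^+$ genuinely dominates every one-dimensional second-derivative term that is used). There is no conceptual difficulty — the only care needed is that when comparing $f$-values at $\mathbf{s}+\ell\mathbf{e}_i$ and $\mathbf{x}+\ell\mathbf{e}_i$ one uses $\osc(f;S+\ell\mathbf{e}_i)$ for the \emph{shifted} slice $S+\ell\mathbf{e}_i$ rather than $S$ itself, which is exactly why the max over $\ell\in\{-1,\ldots,2\}$ of $\osc(f;S+\ell\mathbf{e}_i)$ (and not merely $\osc(f;S)$) appears on the right-hand side of \eqref{ddimlosc-1}.
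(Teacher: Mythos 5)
Your argument is correct and is essentially the paper's own proof: the paper also applies \lemref{onedimlosc} fiberwise, pastes the fibers onto $S$ with \lemref{combinemanyoscs}, and then controls the oscillation over the four shifted slices by comparing through the segment $\mathbf{x}+[-1,2]\mathbf{e}_{i}$, bounding $\int_{-1}^{2}|\partial_{i}f(\mathbf{x}+t\mathbf{e}_{i})|\,\dif t$ via a rescaled \lemref{boundwithderivtop} exactly as in your ``more direct'' alternative; your triangle-inequality comparison through the base point is just the second application of \lemref{combinemanyoscs} in the paper, and your constants come out at least as good as the paper's.
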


\begin{proof}
By \lemref{onedimlosc}, we have, for each $\mathbf{x}\in S$, that
\begin{equation}
\begin{aligned}\osc(f;\mathbf{x}+[0,1]\mathbf{e}_{i}) & \le\osc(f;\mathbf{x}+([-1,2]\cap\mathbf{Z})\mathbf{e}_{i})+\tfrac{1}{2}\|(\partial_{ii}f)^{+}\|_{\mathcal{C}(\mathbf{x}+[-1,2]\mathbf{e}_{i})}\\
 & \le\osc(f;S+([-1,2]\cap\mathbf{Z})\mathbf{e}_{i})+\tfrac{1}{2}\|(\partial_{ii}f)^{+}\|_{\mathcal{C}(S+[-1,2]\mathbf{e}_{i})}.
\end{aligned}
\label{eq:applyonedimlosc}
\end{equation}
By \lemref{combinemanyoscs}, we have
\begin{align}
\osc(f;S+[0,1]\mathbf{e}_{i}) & \le\osc(f;S)+2\sup_{\mathbf{x}\in S}\osc(f;\mathbf{x}+[0,1]\mathbf{e}_{i})\nonumber \\
 & \le\osc(f;S)+2\osc(f;S+([-1,2]\cap\mathbf{Z})\mathbf{e}_{i})+\|(\partial_{ii}f)^{+}\|_{\mathcal{C}(S+[-1,2]\mathbf{e}_{i})}\nonumber \\
 & \le3\osc(f;S+([-1,2]\cap\mathbf{Z})\mathbf{e}_{i})+\|(\partial_{ii}f)^{+}\|_{\mathcal{C}(S+[-1,2]\mathbf{e}_{i})},\label{eq:combinedifferentxs}
\end{align}
where in the second inequality we used \eqref{applyonedimlosc}. Also
by \lemref{combinemanyoscs}, we have, for any $\mathbf{x}\in S$,
that
\begin{equation}
\osc(f;S+([-1,2]\cap\mathbf{Z})\mathbf{e}_{i})\le\|\partial_{i}f\|_{L^{1}(\mathbf{x}+[-1,2]\mathbf{e}_{i};\mathcal{H}^{1})}+2\max_{\ell=-1}^{2}\osc(f;S+\ell\mathbf{e}_{i}),\label{eq:layercakebd}
\end{equation}
where $\mathcal{H}^{1}$ denotes the one-dimensional Hausdorff measure
on $\mathbf{x}+[-1,2]\mathbf{e}_{i}$. Using (a rescaled version of)
\lemref{boundwithderivtop}, we have
\begin{equation}
\|\partial_{i}f\|_{L^{1}(\mathbf{x}+[-1,2]\mathbf{e}_{i};\mathcal{H}^{1})}\le3\|\partial_{i}f\|_{\mathcal{C}(\mathbf{x}+[-1,2]\mathbf{e}_{i})}\le3(\max\{|\partial_{i}f(\mathbf{x}-\mathbf{e}_{i})|,|\partial_{i}f(\mathbf{x}+2\mathbf{e}_{i})|\}+3\|(\partial_{ii}f)^{+}\|_{\mathcal{C}(\mathbf{x}+[-1,2]\mathbf{e}_{i})}).\label{eq:partialL1bd}
\end{equation}
Plugging \eqref{partialL1bd} into \eqref{layercakebd}, and then
\eqref{layercakebd} into \eqref{combinedifferentxs}, we obtain \eqref{ddimlosc-1}
from the bound
\[
\osc(f;S+[0,1]\mathbf{e}_{i})\le9\max\{|\partial_{i}f(\mathbf{x}-\mathbf{e}_{i})|,|\partial_{i}f(\mathbf{x}+2\mathbf{e}_{i})|\}+28\|(\partial_{ii}f)^{+}\|_{\mathcal{C}(S+[-1,2]\mathbf{e}_{i})}+6\max_{\ell=-1}^{2}\osc(f;S+\ell\mathbf{e}_{i}).\qedhere
\]
\end{proof}
\lemref{ddimlosc} then forms the heart of the inductive step in the
following proposition.
\begin{prop}
\label{prop:oscbound-noninductive}For any twice-differentiable function
$f:\mathbf{R}^{d}\to\mathbf{R}$, we have
\[
\osc(f;[0,1]^{d})\le28d\left(\|\Delta^{+}f\|_{\mathcal{C}([-1,2]^{d})}+\|\nabla f\|_{\mathcal{C}(([-1,2]\cap\mathbf{Z})^{d})}\right).
\]
\end{prop}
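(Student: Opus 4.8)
The plan is to argue by induction on the dimension $d$. Throughout write $A=\|\Delta^{+}f\|_{\mathcal{C}([-1,2]^{d})}$ and $B=\|\nabla f\|_{\mathcal{C}((\{-1,0,1,2\})^{d})}$, so that the goal is $\osc(f;[0,1]^{d})\le 28d(A+B)$. For the base case $d=1$ one applies \lemref{ddimlosc} with $S=\{0\}$ and $i=1$: then $S+[0,1]\mathbf{e}_{1}=[0,1]$ while each $S+\ell\mathbf{e}_{1}$ is a single point, so the oscillation terms on the right-hand side vanish and one is left with $\osc(f;[0,1])\le 28\|(\partial_{11}f)^{+}\|_{\mathcal{C}([-1,2])}+9\max_{\ell\in\{-1,0,1,2\}}|\partial_{1}f(\ell)|\le 28(A+B)$. (Alternatively the base case follows directly from \lemref{onedimlosc} together with \lemref{boundwithderivtop}.)

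For the inductive step, fix $d\ge 2$, assume the statement in dimension $d-1$, and write $[0,1]^{d}=\big([0,1]^{d-1}\times\{0\}\big)+[0,1]\mathbf{e}_{d}$. Apply \lemref{ddimlosc} with $S=[0,1]^{d-1}\times\{0\}\subset\mathbf{R}^{d}$, $i=d$, and $\mathbf{x}=0\in S$. This controls $\osc(f;[0,1]^{d})$ by $28\|(\partial_{dd}f)^{+}\|_{\mathcal{C}([0,1]^{d-1}\times[-1,2])}$ plus $3\max_{\ell\in\{-1,0,1,2\}}\big[3|\partial_{d}f(\ell\mathbf{e}_{d})|+2\osc(f;[0,1]^{d-1}\times\{\ell\})\big]$, and the three kinds of terms are estimated as follows. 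First, $\|(\partial_{dd}f)^{+}\|_{\mathcal{C}([0,1]^{d-1}\times[-1,2])}\le A$, since $(\partial_{dd}f)^{+}\le\Delta^{+}f$ pointwise and $[0,1]^{d-1}\times[-1,2]\subseteq[-1,2]^{d}$. Second, each point $\ell\mathbf{e}_{d}$ has all coordinates in $\{-1,0,1,2\}$ (the $d$-th equal to $\ell$, the rest $0$), so $|\partial_{d}f(\ell\mathbf{e}_{d})|\le|\nabla f(\ell\mathbf{e}_{d})|\le B$. Third, for the slice oscillations one observes $\osc(f;[0,1]^{d-1}\times\{\ell\})=\osc(g_{\ell};[0,1]^{d-1})$ for $g_{\ell}:=f(\cdot,\ell):\mathbf{R}^{d-1}\to\mathbf{R}$ and applies the inductive hypothesis; since $\Delta^{+}g_{\ell}=\sum_{i<d}(\partial_{ii}f(\cdot,\ell))^{+}\le\Delta^{+}f(\cdot,\ell)$ and $\ell\in[-1,2]$ one has $\|\Delta^{+}g_{\ell}\|_{\mathcal{C}([-1,2]^{d-1})}\le A$, and since $|\nabla g_{\ell}|\le|\nabla f|$ and $\ell\in\{-1,0,1,2\}$ one has $\|\nabla g_{\ell}\|_{\mathcal{C}((\{-1,0,1,2\})^{d-1})}\le B$; hence $\osc(f;[0,1]^{d-1}\times\{\ell\})\le 28(d-1)(A+B)$.

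The step that requires care — and the one I expect to be the main obstacle — is the bookkeeping of the constant, i.e. passing from these estimates to the clean bound $28d(A+B)$ rather than one that degrades geometrically in $d$. A literal substitution into \lemref{ddimlosc} leaves a multiplicative factor (arising from the $3\max_{\ell}[\,\cdots+2\osc\,]$ shape of the estimate) in front of the slice oscillations $28(d-1)(A+B)$, and iterating that over the $d$ coordinate directions would produce a constant growing like a geometric series. To keep the constant linear one must arrange the recursion to be additive: the four integer slices $[0,1]^{d-1}\times\{\ell\}$, $\ell\in\{-1,0,1,2\}$, should not be handled independently but linked through the common one-dimensional ``spine'' $\{0\}^{d-1}\times\{-1,0,1,2\}$ — on which $f$ is controlled purely by $A$ and $B$ via \lemref{boundwithderivtop} — using \lemref{combinemanyoscs}, so that the passage from the $(d-1)$-cube to the $d$-cube costs only one copy of the inductive bound $28(d-1)(A+B)$ together with a dimension-independent additive remainder. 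With that organization the recursion telescopes to $28d(A+B)$, and the remaining ingredients (the set containments above, the pointwise inequality $(\partial_{dd}f)^{+}\le\Delta^{+}f$, and the fact that integer-height slices and the probe points $\ell\mathbf{e}_{d}$ are exactly captured by the prescribed data on $[-1,2]^{d}$ and on the grid $(\{-1,0,1,2\})^{d}$) are routine.
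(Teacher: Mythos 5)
Your argument is structurally the same induction the paper itself uses (extrude one coordinate at a time via \lemref{ddimlosc}; the paper phrases the inductive hypothesis as a bound on $\osc(f;[0,1]^{i}\times\{\mathbf{y}\})$ for all $\mathbf{y}\in([-1,2]\cap\mathbf{Z})^{d-i}$ rather than restricting to the slice functions $g_{\ell}$, but that is only bookkeeping), and your individual estimates --- $(\partial_{dd}f)^{+}\le\Delta^{+}f$, $|\partial_{d}f(\ell\mathbf{e}_{d})|\le B$, $\Delta^{+}g_{\ell}\le\Delta^{+}f$, $|\nabla g_{\ell}|\le|\nabla f|$, and the base case --- are all fine (here $A=\|\Delta^{+}f\|_{\mathcal{C}([-1,2]^{d})}$, $B=\|\nabla f\|_{\mathcal{C}(([-1,2]\cap\mathbf{Z})^{d})}$). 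The genuine gap is precisely the point you flag and then only gesture at: the constant. A literal application of \lemref{ddimlosc} gives the recursion $O_{d}\le 28A+9B+6\,O_{d-1}$, where $O_{i}$ is the maximal oscillation over integer-height $i$-dimensional slices, hence a constant of order $6^{d}$; and your proposed remedy --- linking the four slices through the spine via \lemref{combinemanyoscs} and \lemref{boundwithderivtop} --- is not a different organization, it is literally how \lemref{ddimlosc} is proved, and it cannot make the recursion additive. To compare two generic points of the $d$-cube you must invoke the vertical estimate of \lemref{onedimlosc} at two different horizontal locations, and each such use costs the oscillation of at least two distinct integer-height slices plus the spine (that is the factor $2$ in \lemref{combinemanyoscs}), with a further factor $3$ entering as in \eqref{combinedifferentxs}; so ``one copy of the inductive bound'' is not attainable along this route, and the claim that ``the recursion telescopes to $28d(A+B)$'' is an assertion, not a proof.

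In fairness, the paper's own inductive step has the same soft spot: substituting \eqref{boundthemaxthing} and \eqref{useinductivehypothesis} into \eqref{useddimlosc} actually yields $28A+9B+168(i-1)(A+B)$, not $28iA+(9+28(i-1))B$, so the stated constants $28d$ here and $84d$ in \propref{finallocalest} are not justified by the written argument either. This is harmless downstream: \propref{deterministic-bound} and all later uses only require a finite constant depending on $d$, so your argument, run honestly, proves the proposition with $28d$ replaced by, say, $28(6^{d}-1)/5$, which is fully adequate for the paper. But as a proof of the proposition with the constant as stated, your proposal is incomplete, and the mechanism you suggest for repairing it does not work as described.
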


\begin{proof}
We will prove by induction the stronger statement that for each $i\in\{0,\ldots,d\}$
and $\mathbf{y}\in([-1,2]\cap\mathbf{Z})^{d-i}$,
\begin{equation}
\max_{\mathbf{y}\in([-1,2]\cap\mathbf{Z})^{d-i}}\osc(f;[0,1]^{i}\times\{\mathbf{y}\})\le28i\left(\|\Delta^{+}f\|_{\mathcal{C}([-1,2]^{d})}+\|\nabla f\|_{\mathcal{C}(([-1,2]\cap\mathbf{Z})^{d})}\right).\label{eq:inductivething}
\end{equation}
The base case $i=0$ requires no explanation. For the inductive step,
we assume that
\begin{equation}
\max_{\mathbf{y}\in([-1,2]\cap\mathbf{Z})^{d-i+1}}\osc(f;[0,1]^{i-1}\times\{\mathbf{y}\})\le28(i-1)\left(\|\Delta^{+}f\|_{\mathcal{C}([-1,2]^{d})}+\max_{\mathbf{w}\in([-1,2]\cap\mathbf{Z})^{d}}|\nabla f(\mathbf{w})|]\right).\label{eq:inductivehypothesis}
\end{equation}
Now for all $\mathbf{z}\in([-1,2]\cap\mathbf{Z})^{d-i}$, we have
by \lemref{ddimlosc} that
\begin{align}
\osc(f;[0,1]^{i}\times\{\mathbf{z}\}) & \le28\|(\partial_{ii}f)^{+}\|_{\mathcal{C}([0,1]^{i-1}\times[-1,2]\times\{\mathbf{z}\})}+3\max_{\ell=-1}^{2}[3|\partial_{i}f(\mathbf{0}_{i-1},\ell,\mathbf{z})|+2\osc(f;[0,1]^{i-1}\times\{(\ell,\mathbf{z})\})].\label{eq:useddimlosc}
\end{align}
Here and henceforth, $\mathbf{0}_{i-1}$ denotes the vector of length
$i-1$ all of whose coordinates are $0$. We note that 
\begin{equation}
\max_{\ell=-1}^{2}|\partial_{i}f(\mathbf{0}_{i-1},\ell,\mathbf{z})|\le\|\nabla f\|_{\mathcal{C}(([-1,2]\cap\mathbf{Z})^{d})}\label{eq:boundthemaxthing}
\end{equation}
and, by the inductive hypothesis \eqref{inductivehypothesis}, we
have
\begin{equation}
\max_{\ell=-1}^{2}\osc(f;[0,1]^{i-1}\times\{(\ell,\mathbf{z})\})\le28(i-1)\left(\|\Delta^{+}f\|_{\mathcal{C}([-1,2]^{d})}+\|\nabla f\|_{\mathcal{C}(([-1,2]\cap\mathbf{Z})^{d})}\right).\label{eq:useinductivehypothesis}
\end{equation}
Plugging \eqref{boundthemaxthing} and \eqref{useinductivehypothesis}
into \eqref{useddimlosc}, we obtain
\[
\osc(f;[0,1]^{i}\times\{\mathbf{z}\})\le28i\|\Delta^{+}f\|_{\mathcal{C}([-1,2]^{d})}+(9+28(i-1))\|\nabla f\|_{\mathcal{C}(([-1,2]\cap\mathbf{Z})^{d})},
\]
which implies \eqref{inductivething}.
\end{proof}
It only remains to extract from our oscillation bound a bound on the
gradient of $f$.
\begin{prop}
If $f:\mathbf{R}^{d}\to\mathbf{R}$ is twice-differentiable, then
\begin{equation}
\|\nabla f\|_{\mathcal{C}([1/3,2/3]^{d})}\le84d(\|\Delta^{+}f\|_{\mathcal{C}([-1,2]^{d})}+\|\nabla f\|_{\mathcal{C}(([-1,2]\cap\mathbf{Z})^{d})}).\label{eq:derivbd}
\end{equation}
\end{prop}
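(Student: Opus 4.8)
The plan is to reduce, for each coordinate direction $i$ and each point $\mathbf{x}=(x_1,\dots,x_d)\in[1/3,2/3]^d$, the estimate of $|\partial_i f(\mathbf{x})|$ to a one-dimensional inequality along the line through $\mathbf{x}$ in direction $\mathbf{e}_i$. Writing $g(s)=f(x_1,\dots,x_{i-1},s,x_{i+1},\dots,x_d)$, one has $\partial_i f(\mathbf{x})=g'(x_i)$ with $x_i\in[1/3,2/3]$. I would apply \lemref{onedimlderiv} not to $g$ itself but to the rescaled function $t\mapsto g\bigl(\tfrac{1+t}{3}\bigr)$ on $[-1,2]$: since $t\mapsto\tfrac{1+t}{3}$ carries $[-1,2]$ onto $[0,1]$, and $x_i$ corresponds to $t=3x_i-1\in[0,1]$, the lemma becomes
\[
|\partial_i f(\mathbf{x})|\le\|g'\|_{\mathcal{C}([1/3,2/3])}\le 3\,\osc\bigl(g;\{0,1/3,2/3,1\}\bigr)+\tfrac{1}{6}\bigl\|(g'')^{+}\bigr\|_{\mathcal{C}([0,1])}.
\]
The point of controlling $g'$ only on the middle third $[1/3,2/3]$, rather than on all of $[0,1]$, is that this confines the quantities on the right — four node values of $g$ and the second derivative $g''$ — to the unit cube $[0,1]^d$; applying \lemref{onedimlderiv} to $g$ directly would place the oscillation nodes on the integer slices with $i$th coordinate $-1$ and $2$, and bounding their oscillation through \propref{oscbound-noninductive} would force one to cover a slab three unit cubes long, whose $[-1,2]$-neighbourhood sticks out of $[-1,2]^d$.

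It then remains to bound the two terms. Since $x_j\in[1/3,2/3]\subset[0,1]$ for every $j\ne i$, the segment $\{x_1\}\times\dots\times[0,1]\times\dots\times\{x_d\}$ lies in $[0,1]^d$, so $\osc\bigl(g;\{0,1/3,2/3,1\}\bigr)\le\osc(g;[0,1])\le\osc(f;[0,1]^d)$, and \propref{oscbound-noninductive} bounds the last quantity by $28d\bigl(\|\Delta^{+}f\|_{\mathcal{C}([-1,2]^d)}+\|\nabla f\|_{\mathcal{C}(([-1,2]\cap\mathbf{Z})^d)}\bigr)$. For the other term, $(\partial_{ii}f)^{+}\le\Delta^{+}f$ pointwise and every point $(x_1,\dots,s,\dots,x_d)$ with $s\in[0,1]$ lies in $[0,1]^d\subset[-1,2]^d$, so $\bigl\|(g'')^{+}\bigr\|_{\mathcal{C}([0,1])}\le\|\Delta^{+}f\|_{\mathcal{C}([-1,2]^d)}$. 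Substituting into the display, absorbing the lower-order $\tfrac{1}{6}\|\Delta^{+}f\|$ term, and taking the supremum over $i$ and over $\mathbf{x}\in[1/3,2/3]^d$ gives \eqref{derivbd}.

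I do not expect a serious obstacle. After the reduction to lines, everything rests on \lemref{onedimlderiv} and the already-proved oscillation bound \propref{oscbound-noninductive}, and the remaining manipulations are elementary. The one step that needs care is the scaling in the first display: controlling $g'$ exactly on the middle third of the unit cube is precisely what keeps every auxiliary quantity inside $[0,1]^d$, hence inside $[-1,2]^d$ and compatible with the integer grid $([-1,2]\cap\mathbf{Z})^d$. This is the geometric reason \propref{deterministic-bound} and \propref{finallocalest} are stated with $Q(x,r)$ nested inside a unit-size cube that is in turn nested inside $Q(x,9r)$, each a threefold dilate of the previous.
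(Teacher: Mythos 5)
Your proposal is correct and is essentially the paper's own proof: a rescaled application of \lemref{onedimlderiv} in each coordinate direction (the factor-$3$ shrinking that keeps the oscillation nodes and the second-derivative term inside $[0,1]^{d}$), followed by \propref{oscbound-noninductive} to bound $\osc(f;[0,1]^{d})$ and absorption of the lower-order $\tfrac{1}{6}\|\Delta^{+}f\|_{\mathcal{C}([0,1]^{d})}$ term. The differences from the paper's write-up (working pointwise along lines rather than stating the coordinate-wise bound at once) are purely cosmetic.
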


\begin{proof}
Applying a rescaled version of \lemref{onedimlderiv} in each coordinate
direction, we obtain
\[
\tfrac{1}{3}\|\nabla f\|_{\mathcal{C}([1/3,2/3]^{d})}\le\osc(f;[0,1]^{d})+\tfrac{1}{18}\|\Delta^{+}f\|_{\mathcal{C}([0,1]^{d})}.
\]
Then we obtain \eqref{derivbd} by estimating, using \propref{oscbound-noninductive},
\begin{align*}
\|\nabla f\|_{\mathcal{C}([1/3,2/3]^{d})} & \le84d\left(\|\Delta^{+}f\|_{\mathcal{C}([-1,2]^{d})}+\|\nabla f\|_{\mathcal{C}(([-1,2]\cap\mathbf{Z})^{d})}\right)+\tfrac{1}{6}\|\Delta^{+}f\|_{\mathcal{C}([0,1]^{d})}.\qedhere
\end{align*}
\end{proof}
Now \propref{finallocalest} follows from translating and rescaling
\eqref{derivbd}.

\section{Proof of tightness\label{sec:tightness}}

\subsection{The periodic case}

To complete our tightness proof, we will use the maximum principle
described in \secref{maxprinc}, so it is simplest to first work in
the periodic setting and then let the periodicity go to infinity.
Throughout this subsection, we fix exponents
\begin{align}
m & \in(2d/(d+4),1), & \ell & \in(m/2,(1+2/d)m-1), & \eps & \in(0,\ell-m/2).\label{eq:defineexponents}
\end{align}
The requirement $d<4$ is necessary so that the interval $(2d/(d+4),1)$
is nonempty. We note that if $m$ is chosen arbitrarily in $(2d/(d+4),1)$,
then $\ell$ and $\eps$ can be chosen to satisfy \eqref{defineexponents}.
Let $\nu$ be the finite measure on $\mathbf{R}^{d}$ given by the
statement of \propref{deterministic-bound} with $\alpha=1/(1+\ell-m)$,
noting that $m>d/(2\alpha)$.
\begin{lem}
\label{lem:cutitininhalf}For every $M\in[1,\infty)$, there is a
$K_{0}=K_{0}(d,M)\in[1,\infty)$ so that if $K\ge K_{0}$, then there
is a $A=A(d,M,K)<\infty$ so that the following holds. Suppose that
$L\in[1,\infty)$ and $\psi\in\mathcal{C}([0,1];\mathcal{A}_{\xi}^{2})$
is $L\mathbf{Z}^{d}$-periodic. Let $v\in\mathcal{C}([0,1];\mathcal{A}_{\p_{m}})$
be an $L\mathbf{Z}^{d}$-periodic solution to \eqref{vPDE} and let
$u=v+\psi$. If
\begin{equation}
\int_{0}^{1}\|u(s,\cdot)\|_{L^{2}(\mathbf{R}^{d};\nu)}^{2}\,\dif s,\|\psi\|_{\mathcal{C}([0,1];\mathcal{C}_{\xi}^{2})},\|\psi\|_{\mathcal{C}([0,1];\mathcal{C}_{\p_{\eps}}^{2})}\le M\label{eq:ltM}
\end{equation}
then there exists a $t_{*}\in[0,\min\{K^{1-\ell}\|u(0,\cdot)\|_{\mathcal{C}_{\p_{m,K}}}^{-1},1\}]$
so that $\|u(t_{*},\cdot)\|_{\mathcal{C}_{\p_{m,K}}}\le\max\left\{ A,\tfrac{1}{2}\|u(0,\cdot)\|_{\mathcal{C}_{\p_{m,K}}}\right\} $.
\end{lem}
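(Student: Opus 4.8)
The plan is to locate a single time $t_{*}$ in the allowed interval $[0,T]$, $T:=\min\{K^{1-\ell}\|u(0,\cdot)\|_{\mathcal{C}_{\p_{m,K}}}^{-1},1\}$, at which \emph{both} of the quantities
\[
N(t):=\|u(t,\cdot)\|_{L^{2}(\mathbf{R}^{d};\nu)},\qquad\zeta(t):=\max_{i\in\{1,\dots,d\}}\ \sup_{x\in\mathbf{R}^{d}}\frac{\partial_{i}v_{i}(t,x)}{\p_{\ell,K}(x)}
\]
are not too large, and then to convert this into a bound on $\|u(t_{*},\cdot)\|_{\mathcal{C}_{\p_{m,K}}}$ using \propref{deterministic-bound}. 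Writing $G_{0}:=\|u(0,\cdot)\|_{\mathcal{C}_{\p_{m,K}}}$, applying \propref{deterministic-bound} with $\nabla f=u(t,\cdot)$ (legitimate, since $u(t,\cdot)$ is a gradient lying in $\mathcal{C}_{\p_{m,K}}$; \propref{ugrowth} provides the attendant quantitative control), and bounding $\|\Delta^{+}f\|_{\mathcal{C}_{\p_{\ell,K}}}\le d\,\zeta(t)^{+}+dM$ (the $dM$ absorbing the $\partial_{i}\psi_{i}$ contributions, as $\eps<\ell$), gives $\|u(t,\cdot)\|_{\mathcal{C}_{\p_{m,K}}}\le C(d\,\zeta(t)^{+}+dM+N(t))$. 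Thus it suffices to produce $t_{*}\in[0,T]$ with $\zeta(t_{*})$ and $N(t_{*})$ of order $M+\sqrt{M/T}$; since $T^{-1}=\max\{G_{0}/K^{1-\ell},1\}$, the resulting bound is then $\le\tfrac14 G_{0}+A_{0}(d,M,K)$ once $K$ is large (after an AM--GM estimate on $\sqrt{MG_{0}/K^{1-\ell}}$), and the dichotomy $G_{0}\ge 4A_{0}$ versus $G_{0}<4A_{0}$ yields the claim with $A=2A_{0}$.

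\textbf{A Riccati inequality for $\zeta$.} Inserting $|u(t,x_{*})|\le\p_{m,K}(x_{*})\|u(t,\cdot)\|_{\mathcal{C}_{\p_{m,K}}}\le\p_{m,K}(x_{*})\,C(d\,\zeta(t)^{+}+dM+N(t))$ into the Kruzhkov inequality \eqref{dtzstarreadyforlemmas}, and using $\langle x_{*}\rangle+K\ge K$, $\p_{m,K}(x_{*})(\langle x_{*}\rangle+K)^{-1}=(\langle x_{*}\rangle+K)^{m-1}\le 1$, and $\p_{m,K}(x_{*})(\langle x_{*}\rangle+K)^{-(\ell-\eps)}=(\langle x_{*}\rangle+K)^{m-\ell+\eps}\le(\langle x_{*}\rangle+K)^{\ell}$ (the last since $m+\eps<2\ell$ by \eqref{defineexponents}), I expect that whenever
\[
\zeta(t)\ge C_{5}\bigl(M+N(t)\bigr)
\]
for a suitably large $C_{5}=C_{5}(d)$ — which in particular guarantees \eqref{zstarbigenough} — all error terms are absorbed by the leading term $-\tfrac18(\langle x_{*}\rangle+K)^{\ell}\zeta(t)^{2}$: the $\zeta^{2}$-error by choosing $K$ large (using $\ell+1-m>0$), and the others by using $M\le\zeta(t)/C_{5}$ and $N(t)\le\zeta(t)/C_{5}$ (so $MN(t)\le\zeta(t)^{2}/C_{5}^{2}$, which is what controls the worst error term, $\propto MN(t)(\langle x_{*}\rangle+K)^{m-\ell+\eps}$), together with $m-2\ell+\eps<0$. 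This produces, at such $t$, the clean inequality $\zeta'(t)\le -cK^{\ell}\zeta(t)^{2}$ with $c=c(d)>0$; \lemref{dtzbound1} and parabolic smoothing ensure that $\zeta$ is locally Lipschitz on $(0,T]$ and that the inequality may be read off at a point realizing the maximum.

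\textbf{Using the $L^{2}$-in-time bound.} By \eqref{ltM}, $\int_{0}^{T}N(t)^{2}\,\dif t\le\int_{0}^{1}N(t)^{2}\,\dif t\le M$, so by Chebyshev $|\{t\in[0,T]:N(t)>N_{0}\}|\le T/8$ with $N_{0}:=\sqrt{8M/T}$; set $D:=\{t\in[0,T]:N(t)\le N_{0}\}$, so $|D|\ge 7T/8$, and on $D$ the threshold $C_{5}(M+N(\cdot))$ is $\le\Theta:=C_{5}(M+N_{0})$. From the Riccati inequality (valid wherever $\zeta$ exceeds $C_{5}(M+N(\cdot))$) one obtains, for $s<t$ in $(0,T]$, the comparison bound $\zeta(t)\le\max\{(cK^{\ell}(t-s))^{-1},\,C_{5}(M+\sup_{[s,t]}N)\}$. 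Combining this with $|D|\ge 7T/8$, the aim is to extract $t_{*}\in[3T/4,T]\cap D$ (nonempty, since $|[3T/4,T]\cap D|\ge T/8$) with $\zeta(t_{*})\le\max\{\Theta,\,C(cK^{\ell}T)^{-1}\}$: heuristically, either $\zeta$ falls below $C_{5}(M+N(\cdot))$ somewhere on $[T/2,T]$, and then (as $\zeta$ only decreases while above that threshold) it stays $\le\Theta$ at a later time lying in $D$; or $\zeta$ remains above $C_{5}(M+N(\cdot))$ throughout $[T/2,T]$, whence the Riccati decay forces $\zeta(t)\le(cK^{\ell}(t-T/2))^{-1}$ there, so any $t_{*}\in[3T/4,T]\cap D$ works. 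Feeding $\zeta(t_{*})$ and $N(t_{*})\le N_{0}$ into the reduction of the first paragraph then closes the argument.

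\textbf{Main obstacle.} The delicate point is exactly the extraction of $t_{*}$ just sketched: the set $\{N>N_{0}\}$, although of measure $\le T/8$, is where the Riccati decay can fail and $\zeta$ may grow, and $D$ need not contain an interval, so the comparison bound cannot be applied naively. Making this rigorous requires controlling the increase of $\zeta$ across $\{N>N_{0}\}$ — for instance by a contradiction argument bounding $\int_{\{N>N_{0}\}}(\zeta'(t))^{+}\,\dif t$ via \lemref{dtzbound1} and the smallness of this set, or by a covering argument selecting a sub-interval of $D$ on which $N$ stays below $N_{0}$ — and it is here, together with the absorption in the second paragraph, that the weights $\p_{\ell,K},\xi$ and the exponent constraints \eqref{defineexponents} enter essentially.
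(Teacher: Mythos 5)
Your overall architecture matches the paper's: a Riccati-type inequality for the weighted quantity $\max_i\|(\partial_i v_i)^+\|_{\mathcal{C}_{\p_{\ell,K}}}$ coming from \propref{dtzbound2}, the Sobolev-type bound \propref{deterministic-bound} to convert this (plus the $L^2(\nu)$ norm) into control of $\|u(t,\cdot)\|_{\mathcal{C}_{\p_{m,K}}}$, and a Cauchy--Schwarz-in-time selection of $t_*$ in the upper half of the time window. But your ODE step has a genuine gap, which you yourself flag and do not close. Because you feed the bound $|u(t,x_*)|\le\p_{m,K}(x_*)C(\zeta+M+N(t))$ into \emph{both} error terms of \eqref{dtzstarreadyforlemmas}, your quadratic decay is only available above the \emph{time-dependent} threshold $C_5(M+N(t))$, and $N$ is controlled only in $L^2([0,1])$. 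Your comparison estimate $\zeta(t)\le\max\{(cK^{\ell}(t-s))^{-1},\,C_{5}(M+\sup_{[s,t]}N)\}$ is correct as stated, but the quantity $\sup_{[s,t]}N$ is not bounded by the hypotheses (only $\int N^2\le M$ is), so it is useless on any interval meeting $\{N>N_{0}\}$; and since $D=\{N\le N_{0}\}$ need not contain an interval, the dichotomy you sketch does not go through: while $N$ spikes, $\zeta$ may climb toward the (uncontrolled) instantaneous threshold, and nothing in your argument bounds how large it becomes there or guarantees it has decayed back to $\Theta$ by a later time in $D$. Your proposed repair, bounding $\int_{\{N>N_{0}\}}(\zeta')^{+}\,\dif t$ via \lemref{dtzbound1}, is also not straightforward, because in the regime $\zeta\lesssim M+N(t)$ the error term carries the spatially growing factor $(\langle x_*\rangle+K)^{m-\ell+\eps}$ (which is unbounded for the admissible exponents) and can no longer be absorbed by the good term.

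The paper's proof avoids this issue by treating the two error terms in \eqref{dtzstarreadyforlemmas} differently. The term multiplying $\|\psi\|_{\mathcal{C}^2_{\p_\eps}}$ is estimated not by \propref{deterministic-bound} but by \propref{ugrowth}: on the window $t\le t_{0}=\tfrac12 K^{1-\ell}\|u(0,\cdot)\|_{\mathcal{C}_{\p_{m,K}}}^{-1}$ one has $|u(t,x_*)|\le(2\|u(0,\cdot)\|_{\mathcal{C}_{\p_{m,K}}}+M)\p_{m+\eps t,K}(x_*)$, so this contribution is a constant (in $Z$ and $N$) that can be absorbed into $-\tfrac1{16}\p_{\ell,K}Z^2$ once $Z\ge\|u(0,\cdot)\|_{\mathcal{C}_{\p_{m,K}}}^{1/2}$ --- a \emph{time-independent} threshold. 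Only the term linear in $Z$ with coefficient $N(t)$ survives, giving $Z'\le-\tfrac1{32}K^{\ell}Z^{2}+N(t)Z$ above a fixed threshold, and this is handled by \lemref{riccati-with-linear} via the integrating factor $\exp\{\int_0^t N\}\le\e^{\sqrt M}$, so no decomposition of $[0,T]$ into good and bad sets is ever needed. In short, the missing ingredient in your argument is the use of the a priori growth bound \propref{ugrowth} (which you cite only in passing) to eliminate the $N$-dependence from the threshold, together with the integrating-factor lemma to dispose of the remaining linear term; without something playing that role, the extraction of $t_*$ across the set $\{N>N_0\}$ remains unproven.
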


\begin{proof}
Let us first agree to take $A\ge K^{1-\ell}$, so we can assume that
\begin{equation}
\|u(0,\cdot)\|_{\mathcal{C}_{\p_{m,K}}}>K^{1-\ell},\label{eq:startingismax}
\end{equation}
as otherwise there is nothing to show. Define
\begin{align*}
Z(t) & =\max_{i=1}^{d}\|(\partial_{i}v_{i})^{+}\|_{\mathcal{C}_{\p_{\ell,K}}}; & z_{i}(t,x) & =\frac{\partial_{i}v_{i}(t,x)}{\p_{\ell,K}(x)}; & (i_{*}(t),x_{*}(t)) & \in\argmax_{(i,x)\in\{1,\ldots,d\}\times\mathbf{R}^{d}}(z_{i}(t,x),\partial_{t}z_{i}(t,x)),
\end{align*}
where in the $\argmax$ we use the lexicographic ordering on $\mathbf{R}\times\mathbf{R}$.
In other words, $(i_{*}(t),x_{*}(t))$ is chosen first to maximize
$z_{i}(t,x)$, and then, among the maximizers of $z_{i}(t,x)$, to
maximize $\partial_{t}z_{i}(t,x)$. The $\argmax$ is guaranteed to
exist since $v$ is assumed to be $L\mathbf{Z}^{d}$-periodic. Note
that $Z(t)=z_{i_{*}}(t,x_{*}(t))$. 

By \propref{dtzbound2}, \eqref{ltM} and \cite[Lemma 3.5]{Ham86},
for almost every $t$ such that $Z(t)\ge4(d+3)+M$, we have
\begin{equation}
Z'(t)\le-\tfrac{1}{8}\p_{\ell,K}(x_{*}(t))Z(t)^{2}+|u(t,x_{*}(t))|\left(\frac{Z(t)}{\p_{1,K}(x_{*}(t))}+\frac{M}{\p_{\ell-\eps,K}(x_{*}(t))}\right).\label{eq:Zprimeready}
\end{equation}
Now \propref{deterministic-bound} and \eqref{ltM} imply that there
is a constant $C$, depending only on the dimension $d$, so that
\begin{equation}
\p_{m,K}(x_{*}(t))^{-1}|u(t,x_{*}(t))|\le C(Z(t)+\|u(t,\cdot)\|_{L^{2}(\mathbf{R}^{d};\nu)}+M).\label{eq:boundatthistime}
\end{equation}
Also, \propref{ugrowth} and \eqref{startingismax} imply that there
is a $K_{1}=K_{1}(M)$ so that if $K\ge K_{0}$ and $t<K^{1-\ell}\|u(0,\cdot)\|_{\mathcal{C}_{\p_{m,K}}}^{-1}$
then
\[
|u(t,x_{*}(t))|\le\left([\|u(0,\cdot)\|_{\mathcal{C}_{\p_{m,K}}}^{-1}-K^{-(1-\ell)}t]^{-1}+M\right)\p_{m+\eps t,K}(x_{*}(t)).
\]
In particular, if $t\le t_{0}\coloneqq\tfrac{1}{2}K^{1-\ell}\|u(0,\cdot)\|_{\mathcal{C}_{\p_{m,K}}}^{-1}$,
then we have
\begin{equation}
|u(t,x_{*}(t))|\le\left(2\|u(0,\cdot)\|_{\mathcal{C}_{\p_{m,K}}}+M\right)\p_{m+\eps t,K}(x_{*}(t)).\label{eq:Debound-1}
\end{equation}

Using \eqref{boundatthistime} and \eqref{Debound-1} in \eqref{Zprimeready},
if $K\ge K_{1}$ we obtain, for almost every $t\in[0,t_{0}]$ such
that $Z(t)\ge4(d+3)+M$, that
\begin{equation}
\begin{aligned}Z'(t) & \le-\tfrac{1}{8}\p_{\ell,K}(x_{*}(t))Z(t)^{2}+C[Z(t)+\|u(t,\cdot)\|_{L^{2}(\mathbf{R}^{d};\nu)}+M]Z(t)\p_{m-1,K}(x_{*}(t))\\
 & \qquad\qquad+[2\|u(0,\cdot)\|_{\mathcal{C}_{\p_{m,K}}}+M]M\p_{m+\eps t-\ell+\eps,K}(x_{*}(t)).
\end{aligned}
\label{eq:Zmess}
\end{equation}
Since $m<1$ and $m+\eps t-\ell+\eps\le m-\ell+2\eps<\ell$ by \eqref{defineexponents}
(recalling that $t\le t_{0}\le1$), there is a $K_{2}=K_{2}(d,M)<\infty$
so that if $K\ge K_{2}$ then we have, for all $x\in\mathbf{R}^{d}$,
\begin{align*}
C(1+M)\p_{m-1,K}(x) & ,M(2+M)\p_{m+\eps t-\ell+\eps,K}(x)\le\tfrac{1}{32}\p_{\ell,K}(x)\le1.
\end{align*}
Using these estimates in \eqref{Zmess} yields, as long as $K\ge\max\{K_{1},K_{2}\}$
,
\begin{align*}
Z'(t) & \le-\tfrac{1}{16}\p_{\ell,K}(x_{*}(t))[Z(t)^{2}-\tfrac{1}{2}\|u(0,\cdot)\|_{\mathcal{C}_{\p_{m,K}}}]+\|u(t,\cdot)\|_{L^{2}(\mathbf{R}^{d};\nu)}Z(t)\le-\tfrac{1}{32}K^{\ell}Z(t)^{2}+\|u(t,\cdot)\|_{L^{2}(\mathbf{R}^{d};\nu)}Z(t)
\end{align*}
for almost every $t$ such that $Z(t)\ge\max\{4(d+3)+M,\|u(0,\cdot)\|_{\mathcal{C}_{\p_{m,K}}}^{1/2}\}$.
So by \lemref{riccati-with-linear} below we have, for all $t\le t_{0}$,
\begin{align*}
Z(t) & \le\exp\left\{ \int_{0}^{t}\|u(t,\cdot)\|_{L^{2}(\mathbf{R}^{d};\nu)}\,\dif t\right\} \max\left\{ 8(d+3)+2M,2\|u(0,\cdot)\|_{\mathcal{C}_{\p_{m,K}}}^{1/2},\left[Z(0)^{-1}+\tfrac{1}{64}K^{\ell}t\right]^{-1}\right\} \\
 & \le\e^{M}\max\left\{ 8(d+3)+2M,2\|u(0,\cdot)\|_{\mathcal{C}_{\p_{m,K}}}^{1/2},64(K^{\ell}t)^{-1}\right\} ,
\end{align*}
Now if $t\in[t_{0}/2,t_{0}]$, then by the definition of $t_{0}$,
we have $t\ge\frac{1}{4}K^{1-\ell}\|u(0,\cdot)\|_{\mathcal{C}_{\p_{m,K}}}^{-1}$,
so
\[
Z(t)\le\e^{M}\max\left\{ 8(d+3)+2M,2\|u(0,\cdot)\|_{\mathcal{C}_{\p_{m,K}}}^{1/2},256K^{-1}\|u(0,\cdot)\|_{\mathcal{C}_{\p_{m,K}}}\right\} .
\]
Next we observe, using \eqref{ltM} and the definition of $t_{0}$,
that
\begin{align*}
\frac{1}{t_{0}/2}\int_{t_{0}/2}^{t_{0}}\|u(t,\cdot)\|_{L^{2}(\mathbf{R}^{d};\nu)}\,\dif t\le\left(\frac{2}{t_{0}}\int_{t_{0}/2}^{t_{0}}\|u(t,\cdot)\|_{L^{2}(\mathbf{R}^{d};\nu)}^{2}\,\dif t\right)^{1/2} & \le\sqrt{2M/t_{0}}\le2\sqrt{M}K^{-\frac{1-\ell}{2}}\|u(0,\cdot)\|_{\mathcal{C}_{\p_{m,K}}}^{1/2}.
\end{align*}
Thus we have a $t_{*}\in[t_{0}/2,t_{0}]$ so that
\[
\|u(t_{*},\cdot)\|_{L^{2}(\mathbf{R}^{d};\nu)}\le2\sqrt{M}K^{-\frac{1-\ell}{2}}\|u(0,\cdot)\|_{\mathcal{C}_{\p_{m,K}}}^{1/2}\le2\sqrt{M}\|u(0,\cdot)\|_{\mathcal{C}_{\p_{m,K}}}^{1/2}
\]
Then we have, again using \propref{deterministic-bound},
\begin{multline}
\|u(t_{*},\cdot)\|_{\mathcal{C}_{\p_{m,K}}}\le C(\|u(t_{*},\cdot)\|_{L^{2}(\mathbf{R}^{d};\nu)}+Z(t_{*})+M)\\
\le C\left(2\sqrt{M}\|u(0,\cdot)\|_{\mathcal{C}_{\p_{m,K}}}^{1/2}+\e^{M}\max\left\{ 8(d+3)+2M,2\|u(0,\cdot)\|_{\mathcal{C}_{\p_{m,K}}}^{1/2},256K^{-1}\|u(0,\cdot)\|_{\mathcal{C}_{\p_{m,K}}}\right\} +M\right).\label{eq:penultimateutstarCpmkbound}
\end{multline}
Let $K\ge2048\e^{M}$, so we have an $A_{1}=A_{1}(d,K,M)$ so that
if $\|u(0,\cdot)\|_{\mathcal{C}_{\p_{m,K}}}\ge A_{1}$, then \eqref{penultimateutstarCpmkbound}
implies $\|u(t_{*},\cdot)\|_{\mathcal{C}_{\p_{m,K}}}\le\tfrac{1}{2}\|u(0,\cdot)\|_{\mathcal{C}_{\p_{m,K}}}$.
This completes the proof, since we can take $K_{0}=\max\{K_{1},K_{2},2048\e^{M}\}$
and $A=\max\{K^{1-\ell},A_{1}\}$ (recalling the original agreement
leading to \eqref{startingismax}).
\end{proof}
It remains to prove the Riccati equation estimate we used in the proof
of \lemref{cutitininhalf}.
\begin{lem}
\label{lem:riccati-with-linear}Suppose that $T>0$, $h:[0,T]\to\mathbf{R}$
is Lipschitz, $a>0$, $f\in L^{1}([0,T])$, $b>0$, and for almost
every $t\in[0,T]$ such that $h(t)\ge b$, we have $h'(t)\le-ah(t)^{2}+f(t)h(t)$.
Then we have
\begin{equation}
h(t)\le\exp\left\{ \|f\|_{L^{1}([0,t])}\right\} \max\left\{ 2b,[h(0)^{-1}+at/2]^{-1}\right\} .\label{eq:riccati-with-linear-concl}
\end{equation}
\end{lem}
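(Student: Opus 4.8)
The plan is to read the hypothesis as a Riccati inequality with a multiplicative perturbation and to cancel the linear term $f(t)h(t)$ by an integrating factor built from $|f|$ rather than $f$ — the point being that on the set $\{h\ge b\}$ the solution is positive, so subtracting $|f|h$ instead of $fh$ only helps. Concretely, I would set $G(t)=\exp\{\int_{0}^{t}|f(s)|\,\dif s\}$, so that $G$ is absolutely continuous and nondecreasing, $G(0)=1$, $G(t)\ge1$, $G'=|f|G$ almost everywhere, and $G(t)=\e^{\|f\|_{L^{1}([0,t])}}$. Put $\tilde h=h/G$. Since $h$ is Lipschitz and $G^{\pm1}$ is bounded and absolutely continuous on $[0,T]$, the product $\tilde h$ is absolutely continuous, with $\tilde h'=(h'-|f|h)/G$ a.e. At almost every $t$ with $h(t)\ge b$, the hypothesis and the bound $h(t)\ge b>0$ together with $f(t)-|f(t)|\le0$ give
\[\tilde h'(t)\le\frac{-ah(t)^{2}+(f(t)-|f(t)|)h(t)}{G(t)}\le\frac{-ah(t)^{2}}{G(t)}=-aG(t)\tilde h(t)^{2}\le-a\tilde h(t)^{2},\]
using $G(t)\ge1$ in the last step. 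Thus on any subinterval on which $h\ge b$, the rescaled function $\tilde h$ satisfies the clean differential inequality $\tilde h'\le-a\tilde h^{2}$.

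Next I would split into cases according to whether $h$ drops to $\le2b$ on $[0,t]$. Consider the closed set $E=\{s\in[0,t]:h(s)\le2b\}$. If $E=\emptyset$, then $h>2b\ge b$ throughout $[0,t]$, so $\tilde h$ is bounded below by $2b/G(t)>0$ (as $G$ is nondecreasing), hence $\tilde h^{-1}$ is absolutely continuous with $(\tilde h^{-1})'=-\tilde h'/\tilde h^{2}\ge a$ a.e.; integrating gives $\tilde h(t)^{-1}\ge\tilde h(0)^{-1}+at=h(0)^{-1}+at$, so $h(t)=G(t)\tilde h(t)\le G(t)[h(0)^{-1}+at]^{-1}\le G(t)[h(0)^{-1}+at/2]^{-1}$. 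If $E\ne\emptyset$, let $s_{0}=\max E$. If $s_{0}=t$ then $h(t)\le2b$ and there is nothing to prove. If $s_{0}<t$, then $h>2b\ge b$ on $(s_{0},t]$, so $\tilde h'\le0$ a.e. there and $\tilde h$ is nonincreasing on $[s_{0},t]$; hence $\tilde h(t)\le\tilde h(s_{0})=h(s_{0})/G(s_{0})\le2b$ (using $h(s_{0})\le2b$ and $G(s_{0})\ge1$), so $h(t)=G(t)\tilde h(t)\le2b\,G(t)$. In every case $h(t)\le G(t)\max\{2b,[h(0)^{-1}+at/2]^{-1}\}$, which is \eqref{riccati-with-linear-concl} since $G(t)=\e^{\|f\|_{L^{1}([0,t])}}$.

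I do not expect a serious obstacle here; the argument is essentially a comparison principle. The two points that require care are the regularity bookkeeping — that $\tilde h$ and, where needed, $\tilde h^{-1}$ are absolutely continuous so that the a.e. inequalities can be integrated, which follows from $h$ being Lipschitz and $G^{\pm1}$ being bounded and absolutely continuous — and the fact that the Riccati inequality is only assumed on $\{h\ge b\}$, which is exactly why one must track the last time $h\le2b$ rather than integrating naively from $0$ to $t$. The only genuinely substantive idea is the choice of integrating factor $\exp\{\int|f|\}$ rather than $\exp\{\int f\}$, which turns the perturbation into a sign-favorable term precisely because $h$ is positive on the region where the inequality holds.
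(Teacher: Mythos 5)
Your proof is correct and follows essentially the same route as the paper's: an integrating factor $\exp\{\pm\int f\}$ (with the sign of $f$ rendered harmless by the positivity of $h$ on $\{h\ge b\}$) reduces the hypothesis to the clean inequality $\tilde h'\le -a\tilde h^{2}$, from which the stated bound follows. The only difference is cosmetic: the paper cites the final step as a ``standard bound on solutions to the Riccati equation,'' whereas you carry it out explicitly via the case analysis on the last time $h\le 2b$, which is a perfectly valid (and slightly more self-contained) way to finish.
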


\begin{proof}
Assume without loss of generality that $f(t)\ge0$ for all $t$. Let
$F(t)=\exp\left\{ -\int_{0}^{t}f(s)\,\dif s\right\} $ and let $H(t)=F(t)h(t)$.
Note in particular that $F(t)\le1$ for all $t$. Then, for almost
every $t\in[0,T]$ such that $H(t)\ge b$, we have $h(t)\ge b$ and
thus %
\[
H'(t)\le-\omega(t)F(t)h(t)^{2}\le-aH(t)^{2}.
\]
Then \eqref{riccati-with-linear-concl} follows from standard bounds
on solutions to the Riccati equation:
\[
\exp\left\{ -\|f\|_{L^{1}([0,t])}\right\} h(t)=H(T)\le\max\left\{ 2b,[H(0)^{-1}+aT/2]^{-1}\right\} =\max\left\{ 2b,[h(0)^{-1}+aT/2]^{-1}\right\} .\qedhere
\]
\end{proof}
We now iterate \lemref{cutitininhalf} to show that our dynamics brings,
in time $1$, any initial condition down to a size depending only
on the forcing on $[0,2]$ and the integrated $L^{2}$ norm $\int_{0}^{2}\|u(s,\cdot)\|_{L^{2}(\mathbf{R}^{d};\nu)}^{2}\,\dif s$.
The last quantity, when $u$ is taken to solve \eqref{uLPDE} with
initial condition $0$, will be \emph{a priori} bounded using \propref{L2bound}.
\begin{prop}
\label{prop:bringitdown}For every $M\in[1,\infty)$, there is a $B=B(d,M)<\infty$
so that the following holds. Suppose that $L\in[1,\infty)$ and $\psi\in\mathcal{C}([0,2];\mathcal{A}_{\xi}^{2})$
is $L$-periodic. Let $v$ be an $L$-periodic solution to \eqref{vPDE}
and let $u=v+\psi$. If
\begin{equation}
\int_{0}^{2}\|u(s,\cdot)\|_{L^{2}(\mathbf{R}^{d};\nu)}^{2}\,\dif s,\|\psi\|_{\mathcal{C}([0,2];\mathcal{C}_{\xi}^{2})},\|\psi\|_{\mathcal{C}([0,2];\mathcal{C}_{\p_{\eps}}^{2})}\le M\label{eq:ltM-1}
\end{equation}
then there exists a $t\in[0,2]$ so that $\|u(t,\cdot)\|_{\mathcal{C}_{\p_{m}}}\le B$.
\end{prop}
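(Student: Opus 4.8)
The plan is to iterate \lemref{cutitininhalf} along a finite sequence of length-one time-windows sitting inside $[0,2]$, forcing $\|u\|_{\mathcal{C}_{\p_{m,K}}}$ below the threshold $A$ furnished by that lemma, and then to trade the $\mathcal{C}_{\p_{m,K}}$-bound for a $\mathcal{C}_{\p_{m}}$-bound at the cost of a factor $K^{m}$. First I would fix $K\ge K_{0}(d,M)$ and replace the constant $A(d,M,K)$ of \lemref{cutitininhalf} by $A:=\max\{A(d,M,K),\,2K^{1-\ell}\}$; enlarging $A$ only weakens the conclusion of that lemma, so it remains valid. The preliminary observation is that its hypotheses are invariant under time translation: for any $s\in[0,1]$ the triple $(\psi(s+\cdot,\cdot),v(s+\cdot,\cdot),u(s+\cdot,\cdot))$ satisfies \eqref{ltM} on $[0,1]$, since $\int_{0}^{1}\|u(s+r,\cdot)\|_{L^{2}(\mathbf{R}^{d};\nu)}^{2}\,\dif r\le\int_{0}^{2}\|u(r,\cdot)\|_{L^{2}(\mathbf{R}^{d};\nu)}^{2}\,\dif r\le M$ and the $\psi$-norms over $[s,s+1]$ are dominated by those over $[0,2]$, while the translate of $v$ still solves \eqref{vPDE} with the translated forcing and is still $L\mathbf{Z}^{d}$-periodic.

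Then I would run the iteration. Put $s_{0}=0$ and $N_{i}=\|u(s_{i},\cdot)\|_{\mathcal{C}_{\p_{m,K}}}$, which is finite because $v(s_{i},\cdot)\in\mathcal{C}_{\p_{m}}\subset\mathcal{C}_{\p_{m,K}}$ and $\psi(s_{i},\cdot)\in\mathcal{C}_{\xi}\subset\mathcal{C}_{\p_{m,K}}$. As long as $s_{i}\le1$ and $N_{i}>A$, apply \lemref{cutitininhalf} to the translate by $s_{i}$ to produce $s_{i+1}\in[s_{i},\,s_{i}+\min\{K^{1-\ell}N_{i}^{-1},1\}]$ with $N_{i+1}\le\max\{A,\tfrac{1}{2}N_{i}\}$; otherwise stop. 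Since $N_{i}>2A$ forces $N_{i+1}\le\tfrac{1}{2}N_{i}$ while $A<N_{i}\le2A$ forces $N_{i+1}\le A$, only finitely many steps occur; let $J$ be the last index. For each performed step ($i\le J-1$) one has $N_{i}>A$, and whenever also $N_{i+1}>A$ (i.e.\ for $i\le J-2$) one has $N_{i}>2A$ and $N_{i}\ge2N_{i+1}$; chaining these downward from $N_{J-1}>A$ gives $N_{i}>2^{J-1-i}A$ for $0\le i\le J-1$, whence
\[
s_{J}=\sum_{i=0}^{J-1}(s_{i+1}-s_{i})\le\sum_{i=0}^{J-1}\frac{K^{1-\ell}}{N_{i}}<\frac{K^{1-\ell}}{A}\sum_{k\ge0}2^{-k}=\frac{2K^{1-\ell}}{A}\le1.
\]
In particular all partial sums stay below $1$, so the iteration never stops because of the condition $s_{i}>1$; hence the windows $[s_{i},s_{i}+1]$, $i<J$, lie in $[0,2]$, every application of \lemref{cutitininhalf} is legitimate, and the only way the iteration can terminate is with $N_{J}\le A$. (If $N_{0}\le A$ already, take $J=0$.) Thus $t:=s_{J}\in[0,1]\subset[0,2]$ satisfies $\|u(t,\cdot)\|_{\mathcal{C}_{\p_{m,K}}}\le A$.

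Finally I would convert weights: since $(\langle x\rangle+K)/(\langle x\rangle+1)\le K$ for $K\ge1$, we have $\p_{m,K}\le K^{m}\p_{m}$ pointwise and hence $\|f\|_{\mathcal{C}_{\p_{m}}}\le K^{m}\|f\|_{\mathcal{C}_{\p_{m,K}}}$ for all $f$; applying this at time $t$ gives $\|u(t,\cdot)\|_{\mathcal{C}_{\p_{m}}}\le K^{m}A=:B$, which depends only on $d$ and $M$ (through $K$, $A$ and the fixed exponents $m,\ell,\eps,\alpha$). The main obstacle I anticipate is precisely the time bookkeeping in the middle step: the number of halvings needed to bring $N_{i}$ down to $A$ is unbounded as $N_{0}\to\infty$, so counting the steps is useless; instead one must see that the increments $K^{1-\ell}N_{i}^{-1}$ form a geometric series dominated by its last few terms, which hinges on the lower bound $N_{i}>2^{J-1-i}A$ on the intermediate norms (a priori they could collapse much faster than by a factor of two, but the slow exact-halving scenario is the worst case for the total elapsed time), while simultaneously keeping every window inside $[0,2]$ so that \lemref{cutitininhalf} stays applicable throughout.
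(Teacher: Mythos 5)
Your proof is correct and follows essentially the same route as the paper: iterate Lemma \ref{lem:cutitininhalf} from time $0$, use the backward doubling of the norms to sum the step lengths $K^{1-\ell}N_i^{-1}$ as a geometric series, and convert $\mathcal{C}_{\p_{m,K}}$ to $\mathcal{C}_{\p_m}$ at the cost of a factor $K^m$. The only (harmless) difference is bookkeeping: you enlarge $A$ to $\max\{A,2K^{1-\ell}\}$ so the total elapsed time stays below $1$ and the iteration can only stop with $N_J\le A$, whereas the paper allows the time-based stop and uses the same geometric series to show that in that case the final norm is at most $K^{1-\ell}$.
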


\begin{proof}
Choose $K_{0}$ depending on $M$ as in \lemref{cutitininhalf} and
fix $K\ge K_{0}$. (The choice $K=K_{0}$ is fine.) We inductively
define a finite sequence of times $t_{0},t_{1},\ldots,t_{N}$ as follows.
Let $t_{0}=0$. If $t_{k}$ has been defined and either $t_{k}\ge1$
or $\|u(t_{N},\cdot)\|_{\mathcal{C}_{\p_{m,K}}}\le A$, then let $N=k$
and stop. Otherwise, by \lemref{cutitininhalf}, since \eqref{ltM-1}
implies \eqref{ltM} for the equation considered as starting at time
$t_{k}<1$, there is an $A=A(d,K,M)<\infty$ so that we can find a
\begin{equation}
t_{k+1}\in[t_{k},t_{k}+K^{1-\ell}\|u(t_{k},\cdot)\|_{\mathcal{C}_{\p_{m,K}}}^{-1}]\label{eq:tknotsobig}
\end{equation}
such that $\|u(t_{k+1},\cdot)\|_{\mathcal{C}_{\p_{m,K}}}\le\max\left\{ A,\tfrac{1}{2}\|u(t_{k},\cdot)\|_{\mathcal{C}_{\p_{m,K}}}\right\} $.%
{} Thus, for each $0\le k\le N-1$, we have 
\[
\|u(t_{k},\cdot)\|_{\mathcal{C}_{\p_{m,K}}}\ge2^{N-k}\|u(t_{N},\cdot)\|_{\mathcal{C}_{\p_{m,K}}}.
\]
In the case when $\|u(t_{N},\cdot)\|_{\mathcal{C}_{\p_{m,K}}}>A$
and thus $t_{N}\ge1$, this means that by \eqref{tknotsobig} we get
\[
1\le t_{N}=\sum_{k=1}^{N}(t_{k}-t_{k-1})\le K^{1-\ell}\sum_{k=0}^{N-1}\|u(t_{k},\cdot)\|_{\mathcal{C}_{\p_{m,K}}}^{-1}\le K^{1-\ell}\|u(t_{N},\cdot)\|_{\mathcal{C}_{\p_{m,K}}}^{-1}\sum_{k=0}^{N-1}2^{-(N-k)}\le K^{1-\ell}\|u(t_{N},\cdot)\|_{\mathcal{C}_{\p_{m,K}}}^{-1},
\]
so $\|u(t_{N},\cdot)\|_{\mathcal{C}_{\p_{m,K}}}\le K^{1-\ell}$. Since
the alternative is that $\|u(t_{N},\cdot)\|_{\mathcal{C}_{\p_{m,K}}}\le A$,
in either case we have
\[
\|u(t_{N},\cdot)\|_{\mathcal{C}_{\p_{m}}}\le K^{m}\|u(t_{N},\cdot)\|_{\mathcal{C}_{\p_{m,K}}}\le(1+K)^{m}\max\{A,K^{1-\ell}\}.\qedhere
\]
\end{proof}
Up until now all of our arguments have been agnostic to the initial
conditions. The value of this, of course, is that they can be applied
to the equation started at any time. The next proposition shows how
we carry this out. %

\begin{prop}
\label{prop:bounded-periodic}Let $d<4$. For each $\delta>0$, we
have a constant $Q(\delta)<\infty$ so that the following holds. Let
$L\in[1,\infty)$ and let $u$ be the solution to \eqref{uLPDE} with
$u(0,\cdot)\equiv0$. Also, let $T\ge0$ and let $S_{T}\sim\Uniform([0,T])$
be independent of everything else. Then $\mathbf{P}(\|u(3+S_{T},\cdot)\|_{\mathcal{C}_{\p_{m}}}>Q(\delta))<\delta.$
\end{prop}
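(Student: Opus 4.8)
The plan is to find, at a well-chosen random time, an instant inside a time-window of unit-order length at which $\|u(t,\cdot)\|_{\mathcal C_{\p_m}}$ is controlled — this is exactly what \propref{bringitdown} delivers once the window-integral of $\|u(s,\cdot)\|_{L^2(\nu)}^2$ and certain weighted norms of the forcing are bounded — and then to push that estimate forward to the prescribed time $3+S_T$ by \propref{ugrowth}. Since \propref{ugrowth} propagates a weighted bound only while worsening its weight exponent at rate $\eps$, and one must propagate over up to two units of time, I first fix a \emph{smaller} exponent: as $m>2d/(d+4)$, choose $m_0\in(2d/(d+4),m)$, then $\ell_0\in(m_0/2,(1+2/d)m_0-1)$ and $\eps_0\in(0,\min\{\ell_0-m_0/2,\tfrac12(m-m_0)\})$, so that $(m_0,\ell_0,\eps_0)$ still satisfies \eqref{defineexponents}; then \lemref{cutitininhalf} and \propref{bringitdown} apply verbatim with these exponents and the associated measure $\nu_0$ from \propref{deterministic-bound}. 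Take $\psi=\psi^{[L]}$ to be the time-stationary solution of \eqref{ASHEL} (it exists on the torus since $\nabla V^{[L]}$ has vanishing zero mode), and $v=u-\psi$, which solves \eqref{vPDE}, is $L\mathbf Z^d$-periodic and of gradient type, and satisfies $u=v+\psi$ as in \propref{bringitdown}.

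\textbf{The good event.} Fix $\delta>0$ and a parameter $M\ge 1$. For $s\ge 0$ set
\[
\mathcal G_s=\Bigl\{\textstyle\int_{s+1}^{s+3}\|u(r,\cdot)\|_{L^2(\mathbf R^d;\nu_0)}^2\,\dif r\le M\Bigr\}\cap\bigl\{\|\psi\|_{\mathcal C([s+1,s+6];\mathcal C^2_\xi)}\le M\bigr\},
\]
noting that the second bound controls $\|\psi\|_{\mathcal C([s+1,s+6];\mathcal C^2_{\p_{\eps_0}})}$ and $\|\psi\|_{\mathcal C([s+1,s+6];\mathcal C_{\p_{m_0,K}})}$ up to absolute constants, since $\xi$ grows more slowly than any power. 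Because $\psi^{[L]}$ is Gaussian and stationary in spacetime and $\psi^{[L]}\to\psi^{[\infty]}$ in $\mathcal C([1,6];\mathcal C^2_\xi)$, the second event has probability $>1-\delta/2$ for $M$ large, uniformly in $s$ and $L$. For the first event, Markov, Fubini, and \propref{L2bound} (for $u$ started at $0$, uniformly in $L$) give
\[
\mathbf P\Bigl(\textstyle\int_{s+1}^{s+3}\|u(r,\cdot)\|_{L^2(\nu_0)}^2\,\dif r>M\Bigr)\le\frac1M\int_{\mathbf R^d}\int_{s+1}^{s+3}\mathbf E\,u(r,x)^2\,\dif r\,\nu_0(\dif x);
\]
taking the expectation over $S_T$ and using $\int_0^{T'}\mathbf E\,u(r,x)^2\,\dif r\le CT'$ bounds the average by $C'\nu_0(\mathbf R^d)/M$ with $C'$ absolute — for $T\ge 3$ the double average over $\{\,s+1\le r\le s+3,\ 0\le s\le T\,\}$ produces only a factor $(T+3)/T\le 2$, while for $T\le 3$ the window-integral is already $\le\int_0^6\mathbf E\,u(r,x)^2\,\dif r\le 6C$ pointwise in $S_T$. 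Hence $\mathbf P(\mathcal G_{S_T}^c)<\delta$ once $M=M(d,\delta)$ is large.

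\textbf{The deterministic step on $\mathcal G_s$.} Apply \propref{bringitdown} (with $m_0,\ell_0,\eps_0,\nu_0$) to the equation restarted at time $s+1$: this yields $t_1=t_1(s)\in[s+1,s+3]$ with $\|u(t_1,\cdot)\|_{\mathcal C_{\p_{m_0}}}\le B=B(d,M)$. (In this periodic setting the weighted-space hypotheses of \propref{bringitdown} and of \propref{ugrowth} below are automatic, $u(t,\cdot)$ being bounded.) Now apply \propref{ugrowth} with exponent $m_0$, forcing parameter $\eps_0$, and horizon $2$, restarted at $t_1$: taking $K=K(\delta)$ large enough (exceeding the threshold of \propref{ugrowth} and with $K^{1-\ell_0}\ge 4\max\{B,1\}$) gives, for all $t\in[t_1,t_1+2]$ and in particular for $t=3+s\le t_1+2$,
\[
\|u(3+s,\cdot)\|_{\mathcal C_{\p_{m_0+\eps_0(3+s-t_1),K}}}\le 2\max\{B,1\}+\|\psi\|_{\mathcal C([t_1,t_1+2];\mathcal C_{\p_{m_0,K}})}\le 2\max\{B,1\}+cM.
\]
Since $m_0+\eps_0(3+s-t_1)\le m_0+2\eps_0\le m$, the left side is at least $K^{-m}\|u(3+s,\cdot)\|_{\mathcal C_{\p_m}}$, so $\|u(3+s,\cdot)\|_{\mathcal C_{\p_m}}\le K^m(2\max\{B,1\}+cM)=:Q(\delta)$, independent of $T$ and $L$. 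Combining with the previous paragraph, $\mathbf P\bigl(\|u(3+S_T,\cdot)\|_{\mathcal C_{\p_m}}>Q(\delta)\bigr)\le\mathbf P(\mathcal G_{S_T}^c)<\delta$.

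\textbf{Main obstacle.} The conceptual heart is the forward-propagation step: \propref{ugrowth} costs $\eps$ in the weight exponent, so reaching $3+S_T$ from the good instant $t_1$ cannot return an estimate in $\mathcal C_{\p_m}$ unless one starts from the strictly better space $\mathcal C_{\p_{m_0}}$ with $m_0<m$; this is why the machinery of \secref{tightness} must be re-run with shifted exponents and why the compatibility of $\eps_0\le\tfrac12(m-m_0)$ with \eqref{defineexponents} must be checked (it holds because $\eps_0$ need only be small). The remainder is bookkeeping: designing $\mathcal G_s$ so its failure probability is $<\delta$ after averaging over $S_T$ \emph{uniformly in $T\in[0,\infty)$} — achieved by only integrating $\|u\|_{L^2(\nu_0)}^2$ over windows of fixed length and splitting $T\ge 3$ from $T\le 3$ — and \emph{uniformly in $L\in[1,\infty)$} — achieved through the uniform-in-$L$ form of \propref{L2bound} and the convergence $\psi^{[L]}\to\psi^{[\infty]}$.
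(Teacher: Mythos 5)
Your proof is correct and follows essentially the same route as the paper's: a good event built from \propref{L2bound}, Fubini/Markov, and Gaussian bounds on $\psi$, then \propref{bringitdown} to locate a nearby time with controlled weighted norm, then forward propagation to $3+S_T$. The only substantive difference is the last step, where the paper appeals to \thmref{existenceuniqueness} while you invoke \propref{ugrowth} directly with pre-shrunk exponents $m_0<m$ to absorb the $\eps$-loss in the weight — a legitimate, and arguably more explicit, way to handle the exponent bookkeeping that the paper leaves implicit.
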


\begin{proof}
Let $\psi$ solve \eqref{ASHEL} with $\psi(0,\cdot)\equiv0$. Standard
Gaussian process estimates, using the smoothness of the mollifier
$\rho$, show that there is a constant $C<\infty$, independent of
$L$, so that $\mathbf{E}\|\psi\|_{\mathcal{C}([t,t+3];\mathcal{C}_{\xi}^{2})},\mathbf{E}\|\psi\|_{\mathcal{C}([t,t+3];\mathcal{C}_{\p_{\eps}}^{2})}\le C$
for each $t\ge0$. Also, by \propref{L2bound} and Fubini's theorem,
using the fact that $\nu$ is a finite measure, we have a (different)
constant $C<\infty$, also independent of $L$, so that $\frac{1}{t}\int_{0}^{t}\mathbf{E}\|u(s,\cdot)\|_{L^{2}(\mathbf{R}^{d};\nu)}^{2}\,\dif s\le C$
for each $t\ge0$. Markov's inequality thus implies that there is
a $Q_{0}(\delta)<\infty$, independent of $L$, so that
\[
\mathbf{P}\left(\max\left\{ \|\psi\|_{\mathcal{C}([\lfloor S_{T}\rfloor,\lfloor S_{T}\rfloor+3];\mathcal{C}_{\xi}^{2})},\|\psi\|_{\mathcal{C}([\lfloor S_{T}\rfloor,\lfloor S_{T}\rfloor+3];\mathcal{C}_{\p_{\eps}}^{2})},\frac{1}{2}\int_{\lfloor S_{T}\rfloor}^{\lfloor S_{T}\rfloor+2}\|u(s,\cdot)\|_{L^{2}(\mathbf{R}^{d};\nu)}^{2}\,\dif s\right\} >Q_{0}(\delta)\right)<\delta.
\]
By \propref{bringitdown}, since $u-\psi$ is $L\mathbf{Z}^{d}$-periodic
and solves \eqref{vPDE}, this means that there is a $Q_{1}(\delta)<\infty$,
still independent of $L$, so that
\[
\mathbf{P}\left(\text{there is a \ensuremath{t\in[\lfloor S_{T}\rfloor,\lfloor S_{T}\rfloor+2]} so that \ensuremath{\|u(t,\cdot)\|_{\mathcal{C}_{\p_{m}}}\le Q_{1}(\delta)}}\right)\ge1-\delta.
\]
Thus by \thmref{existenceuniqueness}, there is a $Q_{2}(\delta)<\infty$,
not depending on $L$, so that $\mathbf{P}\left(\text{\ensuremath{\|u(S_{T}+3,\cdot)\|_{\mathcal{C}_{\p_{m}}}\le Q_{2}(\delta)}}\right)\ge1-\delta$.
\end{proof}

\subsection{The non-periodic case}

All of the estimates above have been uniform in the periodicity length
$L$, so it should not be surprising that they hold for $L=\infty$
as well. For each $L\in[1,\infty]$ let $u^{[L]}$ be a solution to
\eqref{uLPDE} with initial condition $u^{[L]}(0,\cdot)\equiv0$.
Our first lemma passes to the limit $L\to\infty$ in \propref{bounded-periodic},
showing that the family $(u^{[\infty]}(S_{T},\cdot))_{T\ge0}$ is
bounded in probability with respect to the $\mathcal{C}_{\p_{\omega}}$
norm whenever $\omega\in(2d/(d+4),1)$. Since the equation \eqref{Burgers}
is well-posed in this space, we can then upgrade this boundedness
in probability to tightness by using the parabolic regularity statement
in \thmref{existenceuniqueness}, which we do in \propref{tightness}
below.
\begin{lem}
\label{lem:avgbddCpmprime}If $d<4$, then for any $\omega\in(2d/(d+4),1)$
and any $\delta>0$, we have a constant $Q(\delta)<\infty$ so that,
for all $T>0$, if $S_{T}\sim\Uniform([0,T])$ is independent of everything
else, we have
\begin{equation}
\mathbf{P}\left(\|u^{[\infty]}(S_{T}+3,\cdot)\|_{\mathcal{C}_{\p_{\omega}}}\ge Q(\delta)\right)\le\delta.\label{eq:bddinprob}
\end{equation}
\end{lem}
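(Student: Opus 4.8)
The plan is to obtain \eqref{bddinprob} by passing to the limit $L\to\infty$ in \propref{bounded-periodic}, which already gives the desired bound uniformly in $L$ in the periodic setting. Fix $\omega\in(2d/(d+4),1)$ and $\delta>0$. Since $\omega>2d/(d+4)$, I would choose $m\in(2d/(d+4),\omega]$ together with exponents $\ell$ and $\eps$ satisfying \eqref{defineexponents}, and take $\nu$ to be the finite measure supplied by \propref{deterministic-bound} with $\alpha=1/(1+\ell-m)$, exactly as in the set-up of the periodic subsection. Because $\p_m(x)\le\p_\omega(x)$ for every $x$, one has $\|f\|_{\mathcal{C}_{\p_\omega}}\le\|f\|_{\mathcal{C}_{\p_m}}$ for all $f$, so it is enough to prove \eqref{bddinprob} with $\mathcal{C}_{\p_m}$ in place of $\mathcal{C}_{\p_\omega}$. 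With this choice of exponents, \propref{bounded-periodic} furnishes a constant $Q_0=Q_0(\delta)<\infty$, independent of both $L$ and $T$, such that $\mathbf{P}(\|u^{[L]}(3+S_T,\cdot)\|_{\mathcal{C}_{\p_m}}>Q_0)<\delta$ for every $L\in[1,\infty)$ and every $T\ge0$.

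The second ingredient I would establish is the almost sure convergence $u^{[L]}\to u^{[\infty]}$ as $L\to\infty$, uniformly on $[0,T+3]\times[-R,R]^d$ for each $R<\infty$. Since \eqref{ASHEL} is linear and all the solutions $\psi^{[L]}$ start from $0$, the difference $\psi^{[L]}-\psi^{[\infty]}$ solves \eqref{ASHEL} driven by $\nabla(V^{[L]}-V)$, so the same weighted heat-semigroup estimates used to control $\psi$ in \propref{bounded-periodic}, together with \lemref{VLdef}, give $\|\psi^{[L]}-\psi^{[\infty]}\|_{\mathcal{C}([0,T+3];\mathcal{C}_{\p_m})}\to0$ almost surely. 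Both $u^{[L]}$ and $u^{[\infty]}$ have initial data $0$, whose $\mathcal{C}_{\p_m}$-norm is trivially bounded, so the continuity estimate \eqref{continuity} of \thmref{existenceuniqueness} (taken with $L'=\infty$) yields $\|u^{[L]}-u^{[\infty]}\|_{\mathcal{C}([0,T+3];\mathcal{C}([-R,R]^d))}\to0$ almost surely, for every $R$. On a single event $\Omega_0$ of probability $1$ this holds for all integers $R$, and hence on $\Omega_0$ we have $u^{[L]}(s,x)\to u^{[\infty]}(s,x)$ for every $s\in[0,T+3]$ and every $x\in\mathbf{R}^d$; since $S_T$ is independent of the noise and $3+S_T\in[3,T+3]$, this convergence applies in particular at $s=3+S_T$.

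Finally I would combine the two ingredients along the sequence $L=n$, $n=1,2,\ldots$. Setting $E_n=\{\|u^{[n]}(3+S_T,\cdot)\|_{\mathcal{C}_{\p_m}}\le Q_0\}$, so that $\mathbf{P}(E_n)\ge1-\delta$ for all $n$, the reverse Fatou inequality for sets gives $\mathbf{P}(\limsup_n E_n)\ge\limsup_n\mathbf{P}(E_n)\ge1-\delta$, and therefore $\mathbf{P}(\Omega_0\cap\limsup_n E_n)\ge1-\delta$. On $\Omega_0\cap\limsup_n E_n$ there is a subsequence $n_k\to\infty$ along which $|u^{[n_k]}(3+S_T,x)|\le Q_0\p_m(x)$ for all $x\in\mathbf{R}^d$; letting $k\to\infty$ and using the pointwise convergence from $\Omega_0$ gives $|u^{[\infty]}(3+S_T,x)|\le Q_0\p_m(x)$ for all $x$, i.e. $\|u^{[\infty]}(3+S_T,\cdot)\|_{\mathcal{C}_{\p_m}}\le Q_0$. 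Hence $\mathbf{P}(\|u^{[\infty]}(3+S_T,\cdot)\|_{\mathcal{C}_{\p_m}}>Q_0)\le\delta$, and taking $Q(\delta)=Q_0$ finishes the proof since $\|\cdot\|_{\mathcal{C}_{\p_\omega}}\le\|\cdot\|_{\mathcal{C}_{\p_m}}$.

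The step I expect to be the main obstacle is the local-uniform convergence $u^{[L]}\to u^{[\infty]}$: this is the only place where the quantitative well-posedness statement \eqref{continuity} and the noise approximation \lemref{VLdef} are genuinely used, and a little care is needed to route the convergence of the Gaussian pieces $\psi^{[L]}$ through a weighted space in which \eqref{continuity} can be applied. Everything else — the admissible choice of exponents, the reverse-Fatou step, and the comparison of the weights $\p_m$ and $\p_\omega$ — is routine.
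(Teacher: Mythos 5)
Your proposal is correct, and it reaches the conclusion by a genuinely different (and in fact leaner) limiting argument than the paper's. Both proofs rest on the same two ingredients: the uniform-in-$L$ bound of \propref{bounded-periodic} and the almost-sure local-uniform convergence $u^{[L]}\to u^{[\infty]}$ obtained from \eqref{continuity} of \thmref{existenceuniqueness} together with \lemref{VLdef}. Where you diverge is in how the weighted bound is transferred to $L=\infty$: the paper keeps the convergence quantitative, choosing for each $n$ a periodicity $L_n$ so that the discrepancy on $[-M_n,M_n]^d$ is at most $1$ off an event of probability $\delta/2^{n+1}$, and then pays for the union bound with constants $Q_n(\delta)$ that grow in $n$; this growth is absorbed by working with a strictly smaller exponent $m<\omega$ and exploiting the decay of $\p_m/\p_\omega$ on the annuli $A_n(\delta)$, which is why the paper's statement passes from $\mathcal{C}_{\p_m}$ control of the periodic solutions to only $\mathcal{C}_{\p_\omega}$ control of $u^{[\infty]}$. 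You instead fix a single constant $Q_0$ valid for all $L$, use reverse Fatou to get an event of probability at least $1-\delta$ on which infinitely many $u^{[n]}(3+S_T,\cdot)$ satisfy the bound, and then use that the weighted sup norm is lower semicontinuous under pointwise (here locally uniform) convergence along that random subsequence. This avoids the annulus bookkeeping entirely and even allows $m=\omega$, i.e.\ no loss of exponent, which is harmless here since \propref{tightness} has room to spare in the exponents anyway. The one step you flag as delicate---the almost sure convergence of $\psi^{[L]}$, and hence of $u^{[L]}$, in the spaces where \eqref{continuity} applies---is treated at the same level of detail in the paper itself, so your sketch of it is adequate; just make sure the full-probability event $\Omega_{0}$ is taken as a countable intersection over integer boxes, as you do, so that the subsequence extraction and the evaluation at the random time $3+S_T$ happen on a single event.
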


\begin{proof}
By \eqref{continuity} of \thmref{existenceuniqueness} and \eqref{VLconvtoV}
of \lemref{VLdef}, for fixed $t>0$ and $M>0$, $\|u^{[L]}-u^{[\infty]}\|_{\mathcal{C}([0,T];\mathcal{C}([-M,M]^{d}))}$
approaches $0$ as $L$ goes to infinity, almost surely. %
This means that for fixed $T>0$ and $M>0$, we almost surely have
\begin{equation}
\lim_{L\to\infty}\|u^{[L]}(S_{T}+3,\cdot)-u^{[\infty]}(S_{T}+3,\cdot)\|_{\mathcal{C}([-M,M]^{d})}=0.\label{eq:convalmostsure}
\end{equation}

Choose $m\in(2d/(d+4),\omega)$ and choose $\ell,\eps$ to so that
$m,\ell,\eps$ satisfy \eqref{defineexponents}. \propref{bounded-periodic}
implies that for each $\delta>0$ and $n\in\mathbf{N}$, there is
a $Q_{n}(\delta)<\infty$ so that
\begin{equation}
\sup_{L\in[1,\infty)}\mathbf{P}\left(\|u^{[L]}(S_{T}+3,\cdot)\|_{\mathcal{C}_{\p_{m}}}>Q_{n}(\delta)\right)=\delta/2^{n+1}.\label{eq:specializeMarkov}
\end{equation}
By \eqref{convalmostsure}, for fixed choice of $M_{n}$, we can choose
$L_{n}$ so large that
\begin{equation}
\mathbf{P}\left(\|u^{[L_{n}]}(S_{T}+3,\cdot)-u(S_{T}+3,\cdot)\|_{\mathcal{C}([-M_{n},M_{n}]^{d})}\ge1\right)\le\delta/2^{n+1}.\label{eq:uselimit}
\end{equation}
Combining \eqref{specializeMarkov} and \eqref{uselimit} with a union
bound, and then union bounding over all $n$, we have%
\begin{equation}
\mathbf{P}\left(\sup_{n\in\mathbf{N}}\frac{\|u^{[\infty]}(S_{T}+3,\cdot)\mathbf{1}_{[-M_{n},M_{n}]^{d}}\|_{\mathcal{C}_{\p_{m}}}}{1+Q_{n}(\delta)}\ge1\right)\le\delta\label{eq:unionovern}
\end{equation}
for any sequence $\{M_{n}\}_{n}$. Now define $M_{0}(\delta)=0$ and
choose $(M_{n}(\delta))_{n\in\mathbf{N}}$ so that $M_{n}(\delta)$
is nondecreasing in $n$ and there is a constant $\tilde{Q}(\delta)$
so that $1+Q_{n}(\delta)\le\tilde{Q}(\delta)\frac{\p_{\omega}(M_{n-1}(\delta))}{\p_{m}(M_{n-1}(\delta))}$
for each $n$. If we define $A_{n}(\delta)$ to be the square annulus
$[-M_{n}(\delta),M_{n}(\delta)]^{d}\setminus[-M_{n-1}(\delta),M_{n-1}(\delta)]^{d}$,
then we have
\begin{align*}
\|u^{[\infty]}(S_{T}+3,\cdot)\|_{\mathcal{C}_{\p_{\omega}}} & =\adjustlimits\sup_{n\in\mathbf{N}}\sup_{x\in A_{n}(\delta)}\frac{|u^{[\infty]}(S_{T}+3,x)|}{\p_{\omega}(x)}\le\adjustlimits\sup_{n\in\mathbf{N}}\sup_{x\in A_{n}(\delta)}\frac{|u^{[\infty]}(S_{T}+3,x)|}{\p_{m}(x)}\frac{\p_{m}(M_{n-1}(\delta))}{\p_{\omega}(M_{n-1}(\delta))}\\
 & \le\tilde{Q}(\delta)\adjustlimits\sup_{n\in\mathbf{N}}\sup_{x\in[-M_{n}(\delta),M_{n}(\delta)]^{d}}\frac{|u^{[\infty]}(S_{T}+3,x)|}{(1+Q_{n}(\delta))\p_{m}(x)}\le\tilde{Q}(\delta)\sup_{n\in\mathbf{N}}\frac{\|u^{[\infty]}(S_{T}+3,\cdot)\mathbf{1}_{[-M_{n}(\delta),M_{n}(\delta)]^{d}}\|_{\mathcal{C}_{\p_{m}}}}{1+Q_{n}(\delta)}.
\end{align*}
Combining this with \eqref{unionovern}, we obtain \eqref{bddinprob}.
\end{proof}
\begin{prop}
\label{prop:tightness}For each $T\ge0$, let $S_{T}\sim\Uniform([0,T])$
be independent of everything else. If $d<4$, for any $\omega\in(2d/(d+4),1)$,
the family $(u^{[\infty]}(S_{T}+4,\cdot))_{T\ge0}$ is tight in the
topology of $\tilde{\mathcal{A}}_{\p_{\omega}}$ (defined in \eqref{tildeAw}).
\end{prop}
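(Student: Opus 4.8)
The plan is to turn the boundedness in probability of $(u^{[\infty]}(S_T+3,\cdot))_{T\ge0}$ in $\mathcal{C}_{\p_{\omega'}}$ supplied by \lemref{avgbddCpmprime} into tightness of $(u^{[\infty]}(S_T+4,\cdot))_{T\ge0}$ in $\tilde{\mathcal{A}}_{\p_\omega}$, using the extra unit of time to gain Hölder regularity via the parabolic estimate \eqref{ubounds} of \thmref{existenceuniqueness} and then invoking the compact embedding \lemref{weightedcompactness}.

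First I would fix $\delta>0$ and choose exponents $2d/(d+4)<\omega'<\omega''<\omega$. By \lemref{avgbddCpmprime}, applied with $\omega'$ in place of $\omega$, there is $Q_0(\delta)<\infty$, independent of $T$, with $\mathbf{P}(\|u^{[\infty]}(S_T+3,\cdot)\|_{\mathcal{C}_{\p_{\omega'}}}\ge Q_0(\delta))\le\delta/2$ for every $T$. On the complementary event $\mathcal{G}_T$ I would apply \thmref{existenceuniqueness} to the equation restarted at time $S_T+3$ --- that is, driven by the noise $t\mapsto V(S_T+3+t,\cdot)-V(S_T+3,\cdot)$, which has the same law as $V$ because $V$ has stationary increments in time --- with initial datum $\underline{u}=u^{[\infty]}(S_T+3,\cdot)\in\mathcal{A}_{\p_{\omega'}}$ (recall that $u$ is of gradient type) and parameters $m=\omega'$, $m'=\omega''$, $M=Q_0(\delta)$, $R=1$, $T=1$. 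This produces a random constant $X$ and an $\alpha>0$ so that, on $\mathcal{G}_T$, $\|u^{[\infty]}(S_T+4,\cdot)\|_{\mathcal{C}_{\p_{\omega''}}}\le X$ and $\|u^{[\infty]}(S_T+4,\cdot)\|_{\mathcal{C}^{\alpha}_{\p_{2\omega''}}}\le X$, the latter being the ``$[1,T]$'' bound in \eqref{ubounds} evaluated at $T=1$. Since $S_T$ and the increments of $V$ after time $S_T+3$ are jointly independent of everything defining $\mathcal{G}_T$, and the law of $X$ does not depend on the restart time, the conditional law of $X$ given $\mathcal{G}_T$ equals its fixed, $T$-independent unconditional law; hence there is $R(\delta)<\infty$, independent of $T$, with $\mathbf{P}(\{X>R(\delta)\}\cap\mathcal{G}_T)\le\delta/2$, and therefore $\mathbf{P}(\|u^{[\infty]}(S_T+4,\cdot)\|_{\mathcal{C}_{\p_{\omega''}}}>R(\delta)\text{ or }\|u^{[\infty]}(S_T+4,\cdot)\|_{\mathcal{C}^{\alpha}_{\p_{2\omega''}}}>R(\delta))\le\delta$ for all $T$.

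To finish, let $\mathcal{K}_\delta$ be the set of $f\in\tilde{\mathcal{A}}_{\p_\omega}$ with $\|f\|_{\mathcal{C}_{\p_{\omega''}}}\le R(\delta)$ and $\|f\|_{\mathcal{C}^{\alpha}_{\p_{2\omega''}}}\le R(\delta)$; since $\omega''<\omega$, the first bound forces $f(x)/\p_\omega(x)\to0$ as $|x|\to\infty$, so $\mathcal{K}_\delta$ is a closed subset of $\tilde{\mathcal{A}}_{\p_\omega}$, and by \lemref{weightedcompactness} (with $\ell_1=\omega''<\ell=\omega$ and $\ell_2=2\omega''$, the subspace of gradients being closed) it is compact in $\mathcal{C}_{\p_\omega}$, hence in $\tilde{\mathcal{A}}_{\p_\omega}$. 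The probability bound from the previous paragraph reads $\mathbf{P}(u^{[\infty]}(S_T+4,\cdot)\in\mathcal{K}_\delta)\ge1-\delta$ uniformly in $T$, which is the claimed tightness. The main obstacle is exactly this uniformity in $T$ of the random constant $X$: controlling it requires using the time-stationarity of $V$ together with the independence of $S_T$, so that restarting the equation at a random time costs nothing in law. The only other point, already handled by \thmref{existenceuniqueness}, is that one unit of parabolic smoothing is enough to upgrade $\mathcal{C}_{\p_{\omega'}}$ control of the solution to the Hölder control needed to apply \lemref{weightedcompactness} --- which is why the statement concerns $u^{[\infty]}(S_T+4,\cdot)$ and not $u^{[\infty]}(S_T+3,\cdot)$.
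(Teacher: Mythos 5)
Your proposal is correct and follows essentially the same route as the paper: boundedness in probability at time $S_T+3$ from \lemref{avgbddCpmprime}, one unit of parabolic smoothing via the bounds \eqref{ubounds} of \thmref{existenceuniqueness} to gain weighted H\"older control at time $S_T+4$, and then the compact embedding of \lemref{weightedcompactness} into $\tilde{\mathcal{A}}_{\p_{\omega}}$. The only difference is that you make explicit (via time-stationarity of the noise increments and independence of $S_T$) why the tail of the random constant $X$ can be bounded uniformly in $T$, a point the paper's proof leaves implicit.
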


\begin{proof}
Choose $\omega',\omega''$ so that $2d/(d+4)<\omega'<\omega''<\omega$.
By \lemref{avgbddCpmprime}, for each $\delta>0$ we have a constant
$Q(\delta)<\infty$ so that
\[
\mathbf{P}\left(\|u^{[\infty]}(S_{T}+3,\cdot)\|_{\mathcal{C}_{\p_{\omega''}}}\ge Q(\delta)\right)\le\delta
\]
for all $T\ge0$. In light of \thmref{existenceuniqueness}, this
means that there is an $\alpha>0$ (independent of $\delta$) and
a $Q'=Q'(Q(\delta))<\infty$ so that
\[
\mathbf{P}\left(\|u^{[\infty]}(S_{T}+4,\cdot)\|_{\mathcal{C}_{\p_{\omega'}}}+\|u^{[\infty]}(S_{T}+4,\cdot)\|_{\mathcal{C}_{\p_{2\omega'}}^{\alpha}}\ge Q'\right)\le\delta.
\]
This means that the random variable $u^{[\infty]}(S_{T}+4,\cdot)$
is bounded in probability in the topology of $\mathcal{A}_{\p_{\omega'}}\cap\mathcal{A}_{\p_{2\omega'}}^{\alpha}$.
But this space embeds compactly into $\mathcal{A}_{\p_{\omega}}$
by \lemref{weightedcompactness}, and indeed into $\tilde{\mathcal{A}}_{\p_{\omega}}$,
so the proposition is proved.
\end{proof}
As in \cite{DGR19}, we can now show that stationary solutions exist
for \eqref{Burgers}, using a Krylov--Bogoliubov argument (see e.g.
\cite{DPZ96}).
\begin{proof}[Proof of \thmref{maintheorem}.]
Fix $\omega\in(2d/(d+4),1)$. Let $u$ be a solution to \eqref{Burgers}
with initial condition $u(0,\cdot)\equiv0$. For each $T\ge0$, let
$S_{T}\sim\Uniform([0,T])$ be independent of everything else. By
\propref{tightness} and Prokhorov's theorem, for any $\omega\in(2d/(d+4),1)$
we have an increasing sequence $T_{k}\uparrow\infty$ and a random
$\underline{u}^{*}\in\tilde{\mathcal{A}}_{\p_{\omega}}\subset\mathcal{A}_{\p_{\omega}}$
so that
\begin{equation}
u(S_{T_{k}}+4,\cdot)\xrightarrow[k\to\infty]{\mathrm{law}}\underline{u}^{*}\label{eq:convinlaw}
\end{equation}
with respect to the topology of $\mathcal{C}_{\p_{\omega}}$. It is
not difficult to see that $\underline{u}^{*}$ is space-stationary
in law. Let $u^{*}$ be a solution to \eqref{Burgers} with initial
condition $u^{*}(0,\cdot)=\underline{u}^{*}$. Fix $M>0$ and let
$f\in\mathcal{C}(\mathcal{C}([-M,M]^{d};\mathbf{R}^{d}))$, the space
of bounded continuous functions on $\mathcal{C}([-M,M]^{d};\mathbf{R}^{d})$.
By the Skorokhod representation theorem, we can find a family $(\underline{u}^{[k]})_{k\in\mathbf{N}}$
of random variables on the same probability space as $\underline{u}^{*}$,
taking values in $\tilde{\mathcal{A}}_{\p_{\omega}}$, so that $\Law(\underline{u}^{[k]})=\Law(u(S_{T_{k}}+4,\cdot))$
and $\lim\limits _{k\to\infty}\|\underline{u}^{[k]}-\underline{u}^{*}\|_{\mathcal{C}_{\p_{\omega}}}=0$
almost surely. By the bounded convergence theorem and the fact that
$f\in\mathcal{C}(\mathcal{C}([-M,M]^{d};\mathbf{R}^{d}))\subset\mathcal{C}(\mathcal{C}_{\p_{\omega}}(\mathbf{R}^{d};\mathbf{R}^{d}))$,
this means that
\begin{equation}
\lim_{k\to\infty}\mathbf{E}|f(\underline{u}^{[k]})-f(\underline{u}^{*})|=0.\label{eq:fukfustar}
\end{equation}
Now fix $t>0$ and let $u^{[k]}$ be a solution to \eqref{Burgers}
with initial condition $u^{[k]}(0,\cdot)=\underline{u}^{[k]}$. By
\eqref{fukfustar}, \thmref{existenceuniqueness}, and the bounded
convergence theorem, we have
\begin{equation}
\lim_{k\to\infty}\mathbf{E}|f(u^{[k]}(t,\cdot))-f(u^{*}(t,\cdot))|=0.\label{eq:applycontinuity}
\end{equation}
On the other hand, by construction we have $\Law(u^{[k]}(t,\cdot))=\Law(u(S_{T_{k}}+4+t,\cdot))$.
Thus, for fixed $t$, we have
\begin{align*}
|\mathbf{E}f(u^{*}(0,\cdot))-\mathbf{E}f(u^{*}(t,\cdot))| & \le|\mathbf{E}f(u^{*}(0,\cdot))-\mathbf{E}f(u(S_{T_{k}}+4,\cdot))|+|\mathbf{E}f(u(S_{T_{k}}+4,\cdot))-\mathbf{E}f(u(S_{T_{k}}+4+t,\cdot))|\\
 & \qquad+|\mathbf{E}f(u^{[k]}(t,\cdot))-\mathbf{E}f(u^{*}(t,\cdot))|.
\end{align*}
The first term on the right-hand side goes to $0$ as $k\to\infty$
by \eqref{convinlaw}, the second by a simple coupling argument and
the bounded convergence theorem, and the third by \eqref{applycontinuity}.
This means that $\mathbf{E}f(u^{*}(0,\cdot))=\mathbf{E}f(u^{*}(t,\cdot))$
for all $f\in\mathcal{C}(\mathcal{C}([-M,M]^{d};\mathbf{R}^{d}))$
for any $M>0$, hence for all $f\in\mathcal{C}(\mathcal{C}_{\p_{\ell}}(\mathbf{R}^{d};\mathbf{R}^{d}))$
by approximation. Thus, $u^{*}$ is statistically stationary in time.
\end{proof}
\printbibliography

\end{document}